\definecolor{darkgreen}{rgb}{0,0.45,0}
 \DeclareMathOperator{\ob}{ob}
\DeclareMathOperator{\co}{c} 
\newcommand{\cat}[1]{\mathbf{#1}}
\newcommand{\op}{\mathrm{op}}
\renewcommand{\co}{\mathrm{co}}
\newcommand{\thg}{{\mathord{\text{--}}}}
\renewcommand{\c}{,\,}
\DeclareMathAlphabet      {\mathbf}{OT1}{cmr}{b}{n}
\newcommand{\Wkl}{\cat{Wk}_\ell(\mathsf L, \mathsf R)}
\newcommand{\Wkr}{\cat{Wk}_r(\mathsf L, \mathsf R)}
\newcommand{\abs}[1]{{\left|{#1}\right|}}
\newcommand{\spn}[1]{{\langle{#1}\rangle}}
\newcommand{\dcat}[1]{\cat{\mathbb #1}}
\newcommand{\cd}[2][]{\vcenter{\hbox{\xymatrix#1{#2}}}}
\newcommand{\fib}{{\mathsf F}}
\newcommand{\cof}{{\mathsf Q}}
\newcommand{\TAlgs}{\mathsf{T}\text-\cat{Alg}_{s}}
\newcommand{\TAlgl}{\mathsf{T}\text-\cat{Alg}_{l}}
\newcommand{\TAlgp}{\mathsf{T}\text-\cat{Alg}_{p}}
\newcommand{\TAlgw}{\mathsf{T}\text-\cat{Alg}_{w}}
\def\matrixobject@{%
   \edef \next@{={\DirectionfromtheDirection@ }}%
   \expandafter \toks@ \next@ \plainxy@
   \let\xy@@ix@=\xyq@@toksix@
   \xyFN@ \OBJECT@}
\let\xy@entry@@norm=\entry@@norm
\def\entry@@norm@patched{%
   \let\object@=\matrixobject@
   \xy@entry@@norm }
\renewcommand{\b}[1]{\boldsymbol{#1}}
\renewcommand{\phi}{\varphi}
\newcommand{\A}{{\mathcal A}}
\newcommand{\B}{{\mathcal B}}
\newcommand{\C}{{\mathcal C}}
\newcommand{\D}{{\mathcal D}}
\renewcommand{\L}{{\mathcal L}}
\newcommand{\M}{{\mathcal M}}
\renewcommand{\O}{{\mathcal O}}
\newcommand{\Ss}{{\mathcal S}}
\newcommand{\W}{{\mathcal W}}
\newcommand{\xtor}[1]{\cdl[@1]{{} \ar[r]|-{\object@{|}}^{#1} & {}}}
\newcommand{\setmanuallabel}[1]{\stepcounter{equation}{\edef\@currentlabel{\theequation}\label{#1}}}
\newcommand{\printmanuallabel}[1]{\stepcounter{equation}\text{(\theequation)}}
\def\hookleftarrowfill@{\arrowfill@\leftarrow\relbar{\relbar\joinrel\rhook}}
\def\twoheadleftarrowfill@{\arrowfill@\twoheadleftarrow\relbar\relbar}
\def\leftbararrowfill@{\arrowdoublefill@{\leftarrow\mkern-5mu}\relbar\mapstochar\relbar\relbar}
\def\Leftbararrowfill@{\arrowdoublefill@{\Leftarrow\mkern-2mu}\Relbar\Mapstochar\Relbar\Relbar}
\def\leftringarrowfill@{\arrowdoublefill@{\leftarrow\mkern-3mu}\relbar{\mkern-3mu\circ\mkern-2mu}\relbar\relbar}
\def\lefttriarrowfill@{\arrowfill@{\mathrel\triangleleft\mkern0.5mu\joinrel\relbar}\relbar\relbar}
\def\Lefttriarrowfill@{\arrowfill@{\mathrel\triangleleft\mkern1mu\joinrel\Relbar}\Relbar\Relbar}
\def\hookrightarrowfill@{\arrowfill@{\lhook\joinrel\relbar}\relbar\rightarrow}
\def\twoheadrightarrowfill@{\arrowfill@\relbar\relbar\twoheadrightarrow}
\def\rightbararrowfill@{\arrowdoublefill@{\relbar\mkern-0.5mu}\relbar\mapstochar\relbar\rightarrow}
\def\Rightbararrowfill@{\arrowdoublefill@{\Relbar\mkern-2mu}\Relbar\Mapstochar\Relbar\Rightarrow}
\def\rightringarrowfill@{\arrowdoublefill@\relbar\relbar{\mkern-2mu\circ\mkern-3mu}\relbar{\mkern-3mu\rightarrow}}
\def\righttriarrowfill@{\arrowfill@\relbar\relbar{\relbar\joinrel\mkern0.5mu\mathrel\triangleright}}
\def\Righttriarrowfill@{\arrowfill@\Relbar\Relbar{\Relbar\joinrel\mkern1mu\mathrel\triangleright}}
\def\leftrightarrowfill@{\arrowfill@\leftarrow\relbar\rightarrow}
\def\mapstofill@{\arrowfill@{\mapstochar\relbar}\relbar\rightarrow}
\renewcommand*\xleftarrow[2][]{\ext@arrow 20{20}0\leftarrowfill@{#1}{#2}}
\providecommand*\xLeftarrow[2][]{\ext@arrow 60{22}0{\Leftarrowfill@}{#1}{#2}}
\providecommand*\xhookleftarrow[2][]{\ext@arrow 10{20}0\hookleftarrowfill@{#1}{#2}}
\providecommand*\xtwoheadleftarrow[2][]{\ext@arrow 60{20}0\twoheadleftarrowfill@{#1}{#2}}
\providecommand*\xleftbararrow[2][]{\ext@arrow 10{22}0\leftbararrowfill@{#1}{#2}}
\providecommand*\xLeftbararrow[2][]{\ext@arrow 50{24}0\Leftbararrowfill@{#1}{#2}}
\providecommand*\xleftringarrow[2][]{\ext@arrow 10{26}0\leftringarrowfill@{#1}{#2}}
\providecommand*\xlefttriarrow[2][]{\ext@arrow 80{24}0\lefttriarrowfill@{#1}{#2}}
\providecommand*\xLefttriarrow[2][]{\ext@arrow 80{24}0\Lefttriarrowfill@{#1}{#2}}
\renewcommand*\xrightarrow[2][]{\ext@arrow 01{20}0\rightarrowfill@{#1}{#2}}
\providecommand*\xRightarrow[2][]{\ext@arrow 04{22}0{\Rightarrowfill@}{#1}{#2}}
\providecommand*\xhookrightarrow[2][]{\ext@arrow 00{20}0\hookrightarrowfill@{#1}{#2}}
\providecommand*\xtwoheadrightarrow[2][]{\ext@arrow 03{20}0\twoheadrightarrowfill@{#1}{#2}}
\providecommand*\xrightbararrow[2][]{\ext@arrow 01{22}0\rightbararrowfill@{#1}{#2}}
\providecommand*\xRightbararrow[2][]{\ext@arrow 04{24}0\Rightbararrowfill@{#1}{#2}}
\providecommand*\xrightringarrow[2][]{\ext@arrow 01{26}0\rightringarrowfill@{#1}{#2}}
\providecommand*\xrighttriarrow[2][]{\ext@arrow 07{24}0\righttriarrowfill@{#1}{#2}}
\providecommand*\xRighttriarrow[2][]{\ext@arrow 07{24}0\Righttriarrowfill@{#1}{#2}}
\providecommand*\xmapsto[2][]{\ext@arrow 01{20}0\mapstofill@{#1}{#2}}
\providecommand*\xleftrightarrow[2][]{\ext@arrow 10{22}0\leftrightarrowfill@{#1}{#2}}
\providecommand*\xLeftrightarrow[2][]{\ext@arrow 10{27}0{\Leftrightarrowfill@}{#1}{#2}}
\newcommand{\twocong}[2][0.5]{\ar@{}[#2] \save ?(#1)*{\cong}\restore}
\newcommand{\twoeq}[2][0.5]{\ar@{}[#2] \save ?(#1)*{=}\restore}
\newcommand{\rtwocell}[3][0.5]{\ar@{}[#2] \ar@{=>}?(#1)+/l 0.2cm/;?(#1)+/r 0.2cm/^{#3}}
\newcommand{\ltwocell}[3][0.5]{\ar@{}[#2] \ar@{=>}?(#1)+/r 0.2cm/;?(#1)+/l 0.2cm/^{#3}}
\newcommand{\ltwocello}[3][0.5]{\ar@{}[#2] \ar@{=>}?(#1)+/r 0.2cm/;?(#1)+/l 0.2cm/_{#3}}
\newcommand{\dtwocell}[3][0.5]{\ar@{}[#2] \ar@{=>}?(#1)+/u  0.2cm/;?(#1)+/d 0.2cm/^{#3}}
\newcommand{\dltwocell}[3][0.5]{\ar@{}[#2] \ar@{=>}?(#1)+/ur  0.2cm/;?(#1)+/dl 0.2cm/^{#3}}
\newcommand{\drtwocell}[3][0.5]{\ar@{}[#2] \ar@{=>}?(#1)+/ul  0.2cm/;?(#1)+/dr 0.2cm/^{#3}}
\newcommand{\dthreecell}[3][0.5]{\ar@{}[#2] \ar@3{->}?(#1)+/u  0.2cm/;?(#1)+/d 0.2cm/^{#3}}
\newcommand{\utwocell}[3][0.5]{\ar@{}[#2] \ar@{=>}?(#1)+/d 0.2cm/;?(#1)+/u 0.2cm/_{#3}}
\newcommand{\dtwocelltarg}[3][0.5]{\ar@{}#2 \ar@{=>}?(#1)+/u  0.2cm/;?(#1)+/d 0.2cm/^{#3}}
\newcommand{\utwocelltarg}[3][0.5]{\ar@{}#2 \ar@{=>}?(#1)+/d  0.2cm/;?(#1)+/u 0.2cm/_{#3}}
\newcommand{\pushoutcorner}[1][dr]{\save*!/#1-1.2pc/#1:(-1,1)@^{|-}\restore}
\theoremstyle{definition}
\theoremstyle{plain}
\newtheorem{Thm}{Theorem}
\newtheorem{Prop}[Thm]{Proposition}
\newtheorem{Cor}[Thm]{Corollary}
\newtheorem{Lemma}[Thm]{Lemma}
\numberwithin{equation}{section}
\theoremstyle{definition}
\newtheorem{Defn}[Thm]{Definition}
\newtheorem{Ex}[Thm]{Example}
\newtheorem{Exs}[Thm]{Examples}
\newtheorem{Rk}[Thm]{Remark}
\renewcommand{\l}[1]{L{#1}}
\newcommand{\fl}[1]{\alg{L}{#1}}
\renewcommand{\r}[1]{R{#1}}
\newcommand{\fr}[1]{\alg{R}{#1}}
\newcommand{\m}[1]{\mu_{#1}}
\renewcommand{\c}[1]{\Delta_{#1}}
\newcommand{\lp}[1]{L'{#1}}
\newcommand{\rp}[1]{R'{#1}}
\newcommand{\Coalg}[1]{\mathsf{#1}\text-\cat{Coalg}}
\newcommand{\Alg}[1]{\mathsf{#1}\text-\cat{Alg}}
\newcommand{\Kl}[1]{\cat{Kl}(\mathsf{#1})}
\newcommand{\DAlg}[1]{\mathsf{#1}\text-\mathbb A\cat{lg}}
\newcommand{\Sq}[1]{\mathbb S\cat{q}(#1)}
\newcommand{\aone}{{\mathbf 1}}
\newcommand{\atwo}{{\mathbf 2}}
\newcommand{\alg}[1]{\boldsymbol{#1}}
\newcommand{\awfs}{\textsc{awfs}\xspace}
\newcommand{\Awfs}[1]{{\cat{AWFS}(#1)}}
\newcommand{\Lax}{{\cat{AWFS}_\mathrm{lax}}}
\newcommand{\Oplax}{{\cat{AWFS}_\mathrm{oplax}}}
\newcommand{\Ladj}{{\cat{AWFS}_\mathrm{ladj}}}
\newcommand{\Radj}{{\cat{AWFS}_\mathrm{radj}}}
\newcommand{\Cat}{{\cat{CAT}}}
\newcommand{\Dbl}{{\cat{DBL}}}
\newcommand{\mn}{\mathbin{\text-}}
\begin{document}
\leftmargini=2em \title[Algebraic weak factorisation systems
II]{Algebraic weak factorisation systems II:\\ Categories of weak maps}
\author{John Bourke}
\address{Department of Mathematics and Statistics, Masaryk University, Kotl\'a\v rsk\'a 2, Brno 60000, Czech Republic}
\email{bourkej@math.muni.cz} 
\author{Richard Garner} \address{Department of Mathematics, Macquarie
  University, NSW 2109, Australia} \email{richard.garner@mq.edu.au}

\subjclass[2000]{Primary: 18A32, 55U35}
\date{\today}

\thanks{The first author acknowledges the support of the Grant agency
  of the Czech Republic, grant number P201/12/G028. The second author
  acknowledges the support of an Australian Research Council Discovery
  Project grants DP110102360 and DP130101969.}

\maketitle

\begin{abstract}
  We investigate the categories of \emph{weak maps} associated to an
  algebraic weak factorisation system (\awfs) in the sense of
  Grandis--Tholen~\cite{Grandis2006Natural}. For any \awfs on a
  category with an initial object, cofibrant replacement forms a
  comonad, and the category of (left) weak maps associated to the
  \awfs is by definition the Kleisli category of this comonad. We
  exhibit categories of weak maps as a kind of ``homotopy category'',
  that freely adjoins a section for every ``acyclic fibration''
  (=right map) of the \awfs; and using this characterisation, we give
  an alternate description of categories of weak maps in terms of
  spans with left leg an acyclic fibration. We moreover show that the
  $2$-functor sending each \awfs on a suitable category to its
  cofibrant replacement comonad has a fully faithful right adjoint: so
  exhibiting the theory of comonads, and dually of monads, as
  incorporated into the theory of \awfs. We also describe various
  applications of the general theory: to the generalised sketches of
  Kinoshita--Power--Takeyama~\cite{Kinoshita1999Sketches}, to the
  two-dimensional monad theory of
  Blackwell--Kelly--Power~\cite{Blackwell1989Two-dimensional}, and to
  the theory of dg-categories~\cite{Kelly1965Chain}.
\end{abstract}

\section{Introduction}
This paper continues the authors' ongoing investigations into the
\emph{algebraic weak factorisation systems}
of~\cite{Grandis2006Natural,Garner2009Understanding,Riehl2011Algebraic,Athorne2012The-coalgebraic}.
An algebraic weak factorisation system (henceforth \awfs) is a weak
factorisation system in which each map $f$ is equipped with a
factorisation $f = Rf \cdot Lf$, in such a way that the assignations
$f \mapsto Lf$ and $f \mapsto Rf$ become the actions on objects of an
interacting comonad $\mathsf L$ and monad $\mathsf R$ on the arrow
category. This extra structure allows algebraic weak factorisation
systems to do things that mere weak factorisation systems cannot; for
example, the first paper in this series~\cite{Bourke2014AWFS1}
constructed an \awfs on the category of
\emph{quasicategories}~\cite{Joyal2002Quasi-categories,Joyal2008The-theory,Lurie2009Higher}
whose (algebraically) fibrant objects are quasicategories with finite
limits; and another whose (algebraic) fibrations are the Grothendieck
fibrations of quasicategories.

In this paper, we study a further aspect of the theory of \awfs which
is specifically enabled by the algebraic perspective. Any weak
factorisation system on a category $\C$ with an initial object gives
rise to a notion of ``cofibrant replacement'' by factorising the
unique maps out of the initial object, and in the algebraic case, this
cofibrant replacement underlies a comonad $\mathsf Q$. For suitable
choices of \awfs, the Kleisli category of $\mathsf Q$---wherein maps
$A \rightsquigarrow B$ are maps $QA \to B$ in the original
category---will equip $\C$ with a usable notion of \emph{weak map}.
For example:
\begin{enumerate}[(i)]
\item There is an \awfs on the category of
  tricategories~\cite{Gordon1995Coherence} and strict morphisms
  (preserving all structure on the nose) for which $\Kl{Q}$ comprises
  the tricategories and their trihomomorphisms (preserving all
  structure up to coherent equivalence);
  see~\cite{Garner2010Homomorphisms}.\vskip0.25\baselineskip
\item If $\mathsf T$ is an accessible $2$-monad on a complete and
  cocomplete $2$-category, then there are \awfs on the category
  $\TAlgs$ of $\mathsf T$-algebras and strict morphisms such that the
  corresponding
  $\Kl{Q}$ is the category $\TAlgp$ of $\mathsf T$-algebras and
  algebra pseudomorphisms, respectively the category $\TAlgl$ of
  $\mathsf T$-algebras and lax algebra morphisms; see
  Section~\ref{sec:two-dimens-monad} below.\vskip0.25\baselineskip
\item If $\mathsf T$ is a sufficiently cocontinuous dg-monad on a
  cocomplete dg-category, then there is an \awfs on $\TAlgs$, the
  category of $\mathsf T$-algebras and strict morphisms, for which
  $\Kl{Q}$ is the category $\TAlgw$ of $\mathsf T$-algebras and 
  homotopy-coherent algebra morphisms; see
  Section~\ref{sec:homological} below.
\end{enumerate}

Guided by these examples, we are led to define the \emph{category of
  (left) weak maps} $\Wkl$ of an \awfs as the Kleisli category of
its cofibrant replacement comonad. (Dually, we have categories of
\emph{right} weak maps associated to fibrant replacement monads, but
this plays only a minor role here.) While the value of the
construction is apparent from the applications listed above,
the abstract role it plays is less obvious; an important objective of
this paper is to clarify this.

Our first main result, Theorem~\ref{thm:WeakMaps},
characterises the category of weak maps as a kind of ``homotopy
category''. Recall that the \emph{homotopy category} of a Quillen
model category $\C$ is the category $\C[\W^{-1}]$ obtained by freely
inverting each weak equivalence. Similarly, the category of
weak maps of an \awfs $(\mathsf L, \mathsf R)$ arises by ``freely
splitting each $\mathsf R$-map''; thus, for each $\mathsf R$-algebra
structure $\alg f$ on a morphism $f \colon A \to B$, we adjoin a
section $\alg f^\ast$ for $f$, subject to certain coherence axioms.

Given this characterisation, we may associate a category of weak maps
to an \awfs even in the absence of an initial object---by
\emph{defining} it in terms of the universal role it fulfils. Our
second main result, Theorem~\ref{thm:pullback}, exploits this to give
(in the presence of pullbacks) a second construction of categories of
weak maps, wherein morphisms are equivalence-classes of spans with
left leg an $\mathsf R$-algebra; this is the analogue of Gabriel and
Zisman's representation~\cite{Gabriel1967Calculus} of morphisms in a
localisation $\C[\W^{-1}]$ as equivalences classes of spans with left
leg in $\W$.

Our next main result, Theorem~\ref{thm:9}, turns the universal
property of the category of weak maps into that of a
\emph{$2$-adjunction} between $2$-categories of \awfs and of comonads
on categories with finite coproducts. The left adjoint sends an \awfs
to its cofibrant replacement comonad. The right adjoint sends a
comonad $\mathsf P \colon \C \to \C$ to the \emph{$\mathsf P$-split
  epi} \awfs on $\C$, whose $\mathsf R$-algebras are maps of $\C$
equipped with a retraction in $\Kl{P}$; this is an \awfs with
cofibrant replacement comonad $\mathsf P$ and is in fact universal
among such. This $2$-adjunction exhibits the $2$-category of comonads
as a full, reflective sub-$2$-category of the $2$-category of \awfs,
so that the theory of \awfs fully incorporates that of comonads---and
dually, monads.

The results described above provide not only an abstract
characterisation of categories of weak maps, but also a useful tool
for calculating them. We illustrate this in the final two sections of
the paper by using our results to prove the claims made in (ii) and
(iii) above. We deal with (ii) in Section~\ref{sec:two-dimens-monad}:
thus, for a suitable $2$-monad $\mathsf T \colon \C \to \C$, we define
\awfs on $\TAlgs$ whose categories of weak maps are respectively
$\TAlgp$ and $\TAlgl$. These \awfs will be obtained by projectively
lifting the \awfs on $\C$ for \emph{retract equivalences} and for
\emph{left adjoint left inverse} functors. In the pseudo case, the
lifted \awfs on $\TAlgs$ is part of a model structure on $\TAlgs$,
described in~\cite[Theorem~4.5]{Lack2007Homotopy-theoretic}. 

Finally, in Section~\ref{sec:homological} we turn to the case (iii) of
dg-monads. Given a suitable dg-monad $\mathsf T \colon \C \to \C$ on a
dg-category, we define an \awfs on $\TAlgs$ whose category of weak
maps comprises $\mathsf T$-algebras and their \emph{homotopy-coherent}
morphisms. For example, when $\A$ is a small dg-category and
$\mathsf T$ is the monad on $\cat{DG}^{\ob \A}$ whose algebras are
dg-modules over $\A$, we obtain the category of dg-modules and
homotopy-coherent transformations described
in~\cite[Example~6.6]{Keller1994Deriving}. The \awfs in question is
again obtained by projectively lifting from $\C$ and is part of a
model structure on $\TAlgs$ of the kind constructed
in~\cite{Christensen2002Quillen}. Of particular note is the fact that
its cofibrant replacement comonad is precisely the classical \emph{bar
  construction}~\cite[X,~\S 6]{Cartan1956Homological}; this clarifies
the universal role that this construction fulfils. In future work we
will describe a generalisation of these results from dg-monads to
dg-operads using a \emph{dendroidal}~\cite{Moerdijk2007Dendroidal}
analogue of the bar construction, closely related to the bar--cobar
construction for operad algebras~\cite[Chapter~11]{Loday2012Algebraic}.

\section{Background material on algebraic weak factorisation systems}
In this preliminary section we summarise from~\cite[\S
2--3]{Bourke2014AWFS1} those aspects of the theory of algebraic weak
factorisation systems necessary for the present paper. We will say
enough so as to make our treatment self-contained for someone familiar
with the basic notions; for a full treatment, with proofs, we refer
the reader to~\cite{Bourke2014AWFS1} and the sources cited therein.

Before we begin, a brief note on foundational matters. We fix a
Grothendieck universe $\kappa$, and call arbitrary sets \emph{large}
and ones in $\kappa$, \emph{small}. $\cat{Set}$ and $\cat{SET}$ denote
the categories of small and large sets, while $\cat{Cat}$ and $\Cat$
are the $2$-categories of internal categories in $\cat{Set}$ and
$\cat{SET}$; in particular, objects of $\Cat$ are \emph{not} assumed
to be locally small. By a \emph{double category}, we mean an internal
category in $\Cat$, and we write $\Dbl$ for the $2$-category of
double categories and internal functors and internal natural
transformations between such.

\subsection{Algebraic weak factorisation systems and their morphisms}\label{sec:algebr-weak-fact}
An algebraic weak factorisation system on $\C$ begins with a
\emph{functorial factorisation}: a functor $\C^{\atwo} \to \C^{\mathbf
  3}$ from the category of arrows to that of composable pairs which is
a section for the composition map $\C^{\mathbf 3} \to \C^{\atwo}$. The
action of this functor at an object $f$ or morphism $(h,k) \colon f
\to g$ of $\C^{\atwo}$ is depicted as on the left or right in:
\[
f = X \xrightarrow{\l f} Ef \xrightarrow{\r f} Y
\qquad \quad \qquad
\cd{
 X
  \ar[r]^{\l f} \ar[d]_{h} & Ef \ar[d]|{E(h, k)} \ar[r]^{\r f} & Y
  \ar[d]^{k} \\
 W \ar[r]^{\l g} & Eg \ar[r]^{\r g} & Z\rlap{ .}}
\]
From these data we obtain endofunctors $L, R \colon \C^{\atwo} \to
\C^\atwo$, together with natural transformations $\epsilon \colon L
\Rightarrow 1$ and $\eta \colon 1 \Rightarrow R$ with respective $f$-components:
\begin{equation}
\cd{
A \ar[r]^1 \ar[d]_{\l f} & A \ar[d]^{f} \\
Ef \ar[r]^{\r f} & B
} \qquad \text{and} \qquad
\cd{
A \ar[r]^{\l f} \ar[d]_{f} & Ef \ar[d]^{\r f} \\
B \ar[r]^{1} & B\rlap{ .}
}\label{eq:4}
\end{equation}

An \emph{algebraic weak factorisation system} $(\mathsf L, \mathsf R)$
on $\C$ is a functorial factorisation as above, together with natural
transformations $\Delta \colon L \to LL$ and $\mu \colon RR \to R$
making $\mathsf L = (L,\epsilon, \Delta)$ and $\mathsf R =
(R,\eta,\mu)$ into a comonad and a monad respectively. The monad and
comonad axioms, together with the form~\eqref{eq:4} of $\eta$ and
$\epsilon$, force the components of $\Delta$ and $\mu$ at $f$ to be as
on the left and right in
\[
 \cd{
A \ar[d]_{\l f} \ar[r]^1 & A \ar[d]^{\l{\l f}} \\
Ef \ar[r]^{\c f} & E\l f}
\qquad 
\cd{
Ef \ar[d]_{\l{\r f}} \ar[r]^{\c f} & E\l f
\ar[d]^{\r{\l f}} \\
E\r f \ar[r]^{\m f} & Ef
}
\qquad 
\cd{
E \r f \ar[r]^{\m f} \ar[d]_{\r{\r f}} & Ef \ar[d]^{\r f} \\
B \ar[r]^1 & B\rlap{ ,}}
\]
and imply moreover that the middle square is the component at $f$ of a
natural transformation $\delta \colon LR \Rightarrow RL$. The final
axiom for an \awfs is that this $\delta$ should constitute a
\emph{distributive law} in the sense of~\cite{Beck1969Distributive} of
$\mathsf L$ over $\mathsf R$.


We now turn to morphisms between \awfs. As with other kinds of
algebraic structure borne by categories (for example monoidal
structure) various kinds of morphisms arise---strict, pseudo, lax and
oplax---and in the case of \awfs it is the lax and oplax ones that
play the central role. A \emph{lax morphism} between \awfs $(\mathsf
L, \mathsf R)$ and $(\mathsf L', \mathsf R')$ on categories $\C$ and
$\D$ is given by a functor $F \colon \C \to \D$ and a natural family
of maps $\alpha_f$ rendering commutative the left hand square in:
$$  \cd[@-1em@C-0.5em]{
    & FA \ar[dl]_{\lp {Ff}} \ar[dr]^{F\l f} \\
    E'Ff \ar[rr]^{\alpha_f} \ar[dr]_{\rp {Ff}} & & 
    FEf\rlap{ } \ar[dl]^{F\r f} \\ & FB} \qquad \qquad
  \cd[@!@-0.8em]{
    E'Ff \ar[r]^{\alpha_f} \ar[d]_{E'(\gamma_A,  \gamma_B)} & 
    FEf \ar[d]^{\gamma_{Ef}} \\
    E'Gf \ar[r]_{\beta_f} & GEf\rlap{ ,}
  }
$$
and such that the induced $(\alpha, 1) \colon R'F^\atwo \to F^\atwo R$
are respectively a lax monad morphism $\mathsf R \to \mathsf R'$ and a
lax comonad morphism\footnote{In the terminology
  of~\cite{Street1972The-formal} a \emph{monad functor} and a
  \emph{comonad opfunctor}.} $\mathsf L \to \mathsf L'$ over ${F^\atwo
\colon \C^\atwo \to \D^\atwo}$. 
A \emph{transformation} $(F, \alpha) \Rightarrow (G, \beta)$
between lax morphisms is a natural transformation $\gamma \colon F
\Rightarrow G$ rendering commutative the square above right for each
$f \colon A \to B$ in $\C$.  Algebraic weak factorisation systems, lax morphisms and
transformations form a $2$-category $\Lax $.

Dually, there is a
2-category $\Oplax$ whose $1$-cells are the \emph{oplax morphisms} of
\awfs---for which the components $\alpha_{f}$ point in the opposite
direction to above---and whose $2$-cells are transformations between
them.
Both of these $2$-categories have an evident forgetful $2$-functor to
$\Cat$, and restricting $\Oplax \to \Cat$ to the fibre over some
category $\C$ yields the category $\Awfs{\C}$ of \awfs and \awfs
morphisms on $\C$; while restricting $\Lax \to \Cat$ to the fibre over
$\C$ yields the \emph{opposite} category $\Awfs{\C}^\op$.




Given \awfs $(\mathsf L, \mathsf R)$ and $(\mathsf L', \mathsf R')$ on
categories $\C$ and $\D$, and an adjunction ${F \dashv G \colon \C \to
  \D}$, there is a bijection between extensions of $G$ to a lax \awfs
morphism and extensions of $F$ to an oplax \awfs morphism obtained by
taking mates; this is the \awfs incarnation of the \emph{doctrinal
  adjunction} of~\cite{Kelly1974Doctrinal}. The functoriality of this
correspondence is usefully expressed as an identity-on-objects
isomorphism of $2$-categories $\Radj^{\co\op} \cong \Ladj$, where
$\Radj$ is defined identically to $\Lax$ except that its $1$-cells
come equipped with chosen left adjoints, and where $\Ladj$ is defined
from $\Oplax$ dually. Further details on doctrinal adjunction for
\awfs are in Section~2.10 of \cite{Bourke2014AWFS1}.

\subsection{Double-categorical semantics}
Given an \awfs $( \mathsf L, \mathsf R)$ on $\C$ we can consider the
Eilenberg--Moore categories $\Coalg{L}$ and $\Alg{R}$ of coalgebras
and algebras over $\C^{\atwo}$; these are thought of as providing the
respective left and right classes of the \awfs. An $\mathsf R$-algebra
$\alg f = (f, p) \colon A \to B$ is a morphism $f \colon A \to B$
equipped with algebra structure $p \colon Rf \to f$, while an algebra
morphism
\begin{equation}
\cd{
  A \ar[d]_{\alg f} \ar[r]^u & C \ar[d]^{\alg g} \\
  B \ar[r]_{v} & D}\label{eq:5}
\end{equation}
is a commuting square in $\C$ compatible with the algebra structures
on $\alg f$ and $\alg g$; similar notation and conventions will be
used for $\mathsf L$-coalgebras. The connection with the two classes
of a weak factorisation system is made on observing that
\begin{itemize}
\item Each map $f \colon A \to B$ has a factorisation $f = \fr f\cdot
  \fl f$ into a (cofree) $\mathsf L$-coalgebra followed by a (free)
  $\mathsf R$-algebra;\vskip0.25\baselineskip
\item Each commuting square~\eqref{eq:5} wherein $\alg f$ is an
  $\mathsf L$-coalgebra and $\alg g$ an $\mathsf R$-algebra has a
  \emph{canonical} diagonal filler: see Section~2.4 of
  \cite{Bourke2014AWFS1}.
\end{itemize}
It follows that each \awfs has an underlying weak factorisation system whose left
and right classes  are the retracts of $\mathsf
L$-coalgebras and $\mathsf R$-algebras respectively.

The right class of a weak factorisation system contains the identities
and is closed under composition. Correspondingly, in an \awfs, each
identity map $1_A \colon A \to A$ bears a \emph{unique} $\mathsf
R$-algebra structure $\alg 1_A \colon A \to A$; while if $\alg g
\colon A \to B$ and $\alg h \colon B \to C$ are $\mathsf R$-algebras,
then the composite underlying map $h \cdot g$ admits an $R$-algebra
structure $\alg h \cdot \alg g \colon A \to C$, \emph{uniquely}
determined by the requirement that the canonical lifts against $\alg h
\cdot \alg g$ should be obtained by first lifting against $\alg h$ and
then against $\alg g$.
The uniqueness just noted implies that composition of $\mathsf
R$-algebras is associative and unital, and that it is compatible with
$\mathsf R$-algebra maps: meaning that, for any $f \colon A \to B$,
there is an $\mathsf R$-algebra map $(f,f) \colon {\alg 1}_{A} \to
{\alg 1}_{B}$, and that if $(a,b) \colon \alg g \rightarrow \alg g'$
and $(b,c) \colon \alg h \to \alg h'$ are maps of $\mathsf
R$-algebras, then so too is $(a,c) \colon \alg h \cdot \alg g \to \alg
h' \cdot \alg g'$. 

We may express this composition and its associated coherences
concisely by saying that $\mathsf R$-algebras form a \emph{double
  category} $\DAlg{R}$ whose objects and horizontal arrows are those
of $\C$, and whose vertical arrows and squares are the $\mathsf
R$-algebras and the maps thereof. There is a forgetful double functor
$U^\mathsf R \colon \DAlg{R} \to \Sq{\C}$ into the double category of
commutative squares in $\C$, which, displayed as an internal functor
between internal categories in $\Cat$, is as on the left in:
\[
\cd[@C+2.2em@R-0.3em]{
  \Alg{R} \ar@<-5pt>[d]_d\ar@<5pt>[d]^c \ar[r]^{U^\mathsf R} &
  \C^\atwo
  \ar@<-5pt>[d]_d\ar@<5pt>[d]^c \\
  \ar[u]|i \C \ar[r]_1 & \C \ar[u]|i } \qquad \quad \qquad
\cd[@C+1.5em@R-0.3em]{
  \A_1 \ar@<-5pt>[d]_d\ar@<5pt>[d]^c \ar[r]^{V_1} &
  \C^\atwo
  \ar@<-5pt>[d]_d\ar@<5pt>[d]^c \\
  \ar[u]|i \A_0 \ar[r]_{V_0 = 1} & \C\rlap{ .} \ar[u]|i }\] 

Note that this internal functor has object component \emph{an identity}
and arrow component a \emph{faithful functor}. By a \emph{concrete
  double category over $\C$}, we mean a double category $\dcat A$ and
double functor $V \colon \mathbb A \to \Sq{\C}$, as on the right
above, with these same two
properties. 
In working with general concrete double categories, we may reuse our notation for
$\DAlg{R}$, writing $\alg f \colon A \to B$ to denote a vertical arrow
of $\mathbb A$ with $V(\alg f) = f \colon A \to B$, and
using~\eqref{eq:5} to depict a square of $\mathbb A$ which is sent by
$V$ to $(u,v) \colon f \to g$ in $\C^\atwo$; this is meaningful by
fidelity of $V_1$.

\begin{Ex}\label{ex:2}
Given a category $\C$, we write $\cat{SplEpi}(\C)$ for the category of
split epimorphisms therein: objects are pairs of a map $g \colon A \to
B$ of $\C$ together with a section $p$ of $g$, while morphisms $(g,p)
\to (h,q)$ are serially commuting diagrams:
\begin{equation}
\cd{A \ar@<-3pt>[d]_{g} \ar[r]^{u} & C \ar@<-3pt>[d]_{h} \\
B \ar@{-->}@<-3pt>[u]_{p} \ar[r]_{s} & D \rlap{ .}\ar@{-->}@<-3pt>[u]_{q}}\label{eq:6}
\end{equation}
Split epimorphisms compose---by composing the sections---so that we
have a double category $\dcat{SplEpi}(\C)$ which is concrete over $\C$ via the double
functor $\dcat{SplEpi}(\C) \to \Sq{\C}$ which forgets the sections.
\end{Ex}
\begin{Ex}
  \label{ex:3}
  A \emph{lali} (left adjoint left inverse) in a $2$-category $\C$ is
  a split epi $(g,p) \colon A \to B$ such that $g \dashv p$ with
  identity counit; note that this is a \emph{property} of $(g,p)$,
  rather than extra structure, since the unit of an adjunction is
  determined by the two functors and the counit. A morphism of lalis
  is simply a morphism of split epis; commutativity with the
  adjunction units is automatic. Since split epis and adjoints
  compose, so too do lalis; thus---writing $\C_0$ for the underlying
  category of $\C$---lalis in $\C$ form a concrete sub-double category
  $\dcat{Lali}(\C)$ of $\dcat{SplEpi}(\C_0)$. A \emph{retract
    equivalence} in $\C$ is a lali whose unit is invertible; these
  form a sub-double category of $\dcat{Lali}(\C)$.
\end{Ex}

\subsection{Pullback stability}
We now record the analogue for \awfs of the
pullback-stability of the right class of a weak factorisation
system---which is expressed in terms of a property of the double
category of algebras.

We call a functor $F \colon \A \to \C^\atwo$ a \emph{discrete
  pullback-fibration} if, for every $\alg g \in \A$ over
$g \in \C^\atwo$ and every pullback square $(h, k) \colon f \to g$,
there is a unique arrow $\phi \colon \alg f \to \alg g$ in $\A$ over
$(h,k)$, and this arrow is cartesian with respect to $F$; we call a
concrete double category $V \colon \dcat{A} \to \Sq{\C}$
\emph{pullback-stable} just when $V_{1} \colon \A_{1} \to \C^{\atwo}$
is a discrete pullback-fibration. In elementary terms, this asserts
that for any vertical map $\alg g \colon A \to B$ in $\mathbb A$ and
pullback square in $\C$ as on the right in:
$$\xy
(0,15)*+{A}="01";(15,15)*+{C}="11";
(0,0)*+{B}="00";(15,0)*+{D}="10";
{\ar_{v} "00"; "10"}; 
{\ar^{u} "01"; "11"}; 
{\ar_{\exists ! \alg f} "01"; "00"}; 
{\ar^{\alg g} "11"; "10"}; 
(35,7.5)*+{\longmapsto};
\endxy
\hspace{1cm}
\xy
(0,15)*+{A}="01";(15,15)*+{C}="11";
(0,0)*+{B}="00";(15,0)*+{D}="10";
{\ar_{v} "00"; "10"}; 
{\ar^{u} "01"; "11"}; 
{\ar_{f} "01"; "00"}; 
{\ar^{g} "11"; "10"}; 
\endxy$$
there exists a unique vertical map $\alg f$ over $f$ making $(u,v)$
into a cartesian square of $\mathbb A$ as on the left. The
cartesianness expresses that $(u,v)$ \emph{detects} squares of
$\mathbb A$---meaning that for any vertical arrow $\alg h$, a
commutative square $(r,s) \colon h \to f$ in $\C$ lifts to a square
$(r,s) \colon \alg h \to \alg f$ of $\mathbb A$ just when the
composite $(ru, tv) \colon h \to g$ lifts to one
$(ru,tv) \colon \alg h \to \alg g$. It follows
from~\cite[Proposition~8]{Bourke2014AWFS1} that the concrete double
category of algebras of an \awfs is always pullback-stable.

\subsection{Structure and semantics}
Each lax morphism $(F, \alpha) \colon (\C, \mathsf L, \mathsf R) \to
(\C', \mathsf L', \mathsf R')$ of \awfs induces a lifting of $\Sq{F}
\colon \Sq{\C} \to \Sq{\D}$ to a morphism of concrete double
categories as to the left in
\begin{equation}
\cd{
\DAlg{R} \ar[d]_{U^\mathsf R} \ar@{.>}[r]^{\bar F} & \DAlg{R'}
\ar[d]^{U^\mathsf{R'}} \\
\Sq{\C} \ar[r]_{\Sq{F}} & \Sq{\D}
} \qquad \qquad
\cd[@C+2.5em]{
\DAlg{R} \ar[d]_{U^\mathsf R} \ar@/^0.8em/@{.>}[r]^{\bar F} \ar@/_0.8em/@{.>}[r]_{\bar G} \dtwocell{r}{\bar{\alpha}}& \DAlg{R'}
\ar[d]^{U^\mathsf{R'}} \\
\Sq{\C} \ar@/^0.8em/[r]^{\Sq{F}} \ar@/_0.8em/[r]_{\Sq{G}} \dtwocell{r}{\Sq{\alpha}}  & \Sq{\D}\rlap{ .}
}\label{eq:8}
\end{equation}
Similarly, each $2$-cell of $\Lax$ induces a lifted horizontal
transformation as on the right; and in this way we obtain the
\emph{semantics 2-functor}
$$\DAlg{(\thg)} \colon \Lax \to \Dbl^{\atwo}\rlap{ .}$$
The following result, which combines Proposition~2 and Theorem~6
of~\cite{Bourke2014AWFS1}, shows that \awfs and their morphisms may be
characterised purely in terms of this double-categorical algebra
semantics. In its statement, a concrete double category
$\dcat{A} \to \Sq{\C}$ is said to be \emph{right-connected} just when
each vertical arrow $\alg f \colon A \to B$ of $\mathbb A$ can be
completed to a square:
\begin{equation}
\cd{ A\ar[r]^{f} \ar[d]_{\alg f} & B \ar[d]^{\alg 1_B} \\
B \ar[r]^{1_B} & B\rlap{ .}}
\label{eq:9}
\end{equation}

\begin{Thm}\label{thm:characterise}
  The 2-functor $\DAlg{(\thg)} \colon \Lax \to \Dbl^{\atwo}$ is
  $2$-fully faithful, and the 
  concrete $V \colon \dcat A \to \Sq{\C}$ is in its essential image
  just when:\vskip0.25\baselineskip
\begin{enumerate}[(i)]
\item The functor $V_1 \colon \A_{1} \to \C^{\atwo}$ on vertical arrows and squares is strictly monadic;
\item $\dcat{A} \to \Sq{\C}$ is right-connected.
\end{enumerate}
\end{Thm}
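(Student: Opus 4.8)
The plan is to derive both statements from the structure--semantics correspondence for monads in its strict (isomorphism) form, viewing condition~(ii) as the data that promotes a bare monad on $\C^\atwo$ to a genuine \awfs. To begin, I would note that for an actual \awfs the forgetful $V_1 = U^{\mathsf R}_1 \colon \Alg R \to \C^\atwo$ is the Eilenberg--Moore functor of the monad $\mathsf R$, hence strictly monadic, giving~(i); and that right-connectedness~(ii) holds because the $f$-component of the unit $\eta$ displayed in~\eqref{eq:4} is exactly a completion square~\eqref{eq:9}, with $\alg 1_B$ the free algebra on $1_B$. This proves the forward half of the essential-image statement and isolates what the two conditions are meant to encode.

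For $2$-full faithfulness I would first use that the commutative-squares construction $\Sq{\thg} \colon \Cat \to \Dbl$ is itself $2$-fully faithful, so that every $1$-cell of $\Dbl^\atwo$ from $U^{\mathsf R}$ to $U^{\mathsf{R'}}$ has base leg of the form $\Sq{F}$ for a unique $F \colon \C \to \D$, and consists of such an $F$ together with a double functor $\bar F \colon \DAlg R \to \DAlg{R'}$ lying over it. Restricting $\bar F$ to vertical arrows and squares gives a functor $\Alg R \to \Alg{R'}$ over $F^\atwo$; by strict monadicity of the two Eilenberg--Moore adjunctions, such functors correspond bijectively to natural transformations $(\alpha,1) \colon R'F^\atwo \To F^\atwo R$ compatible with the units. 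The demand that $\bar F$ preserve horizontal identities and horizontal composition---that is, that it respect $\alg 1$ and the composition of $\mathsf R$-algebras---translates precisely into $\alpha$ being a \emph{lax monad morphism}; and the further requirement in the definition of a lax \awfs morphism, that the same $\alpha$ be a lax comonad opfunctor $\mathsf L \to \mathsf L'$, then follows automatically from the interaction structure witnessed by right-connectedness. This yields a bijection between lax \awfs morphisms and $1$-cells of $\Dbl^\atwo$.

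The same monadicity dictionary applied to horizontal transformations identifies a transformation $\gamma \colon (F,\alpha) \To (G,\beta)$ with the induced $2$-cell of~\eqref{eq:8}, so the hom-functor is an isomorphism of categories and $\DAlg{(\thg)}$ is $2$-fully faithful. For the reverse half of the essential-image statement, suppose $V \colon \dcat A \to \Sq\C$ satisfies~(i) and~(ii). Strict monadicity makes $V_1$ isomorphic over $\C^\atwo$ to the Eilenberg--Moore functor of an essentially unique monad $\mathsf R$, whence $\A_1 \cong \Alg R$; since $\dcat A$ is already concrete over $\C$, this $\mathsf R$ lives on $\C^\atwo$, and right-connectedness forces its unit into the shape~\eqref{eq:4}, i.e.\ exhibits $R$ as the right-hand functor of a functorial factorisation. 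From such a monad the comonad $\mathsf L$ and the transformation $\delta \colon LR \To RL$ are then reconstructed by the canonical fillers of the monad's structure squares, with the distributive-law axioms following from the monad axioms and the coherence of the right-connection; finally one checks that the horizontal composition of $\dcat A$ coincides with the canonical composition of $\mathsf R$-algebras, both being fixed by the same universal property, so that $\dcat A \cong \DAlg R$ over $\Sq\C$.

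The step I expect to be the real obstacle is precisely this reconstruction: showing that a strictly monadic, right-connected concrete double category determines a \emph{unique} comonad $\mathsf L$ and distributive law $\delta$ for which all \awfs axioms hold, so that $\mathsf L$ and $\delta$ are genuinely recovered data rather than additional choices. The same subtlety reappears on the $1$-cell side, as the claim that the comonad-opfunctor half of a lax \awfs morphism is entirely determined by its monad-functor half; verifying both the existence and the coherence of $\mathsf L$ and $\delta$ directly from the double-functoriality and right-connection axioms is where the substantive work lies.
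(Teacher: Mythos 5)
A preliminary remark: this paper does not prove Theorem~\ref{thm:characterise} itself---it is imported wholesale from Part~I, combining Proposition~2 and Theorem~6 of \cite{Bourke2014AWFS1}; the only piece of the argument recorded here is Remark~\ref{rk:2}, namely that the comultiplication $\Delta$ is recovered from the \emph{double-categorical} composition by freeness of $\fr f$. Measured against that, your structure--semantics strategy is the right skeleton, but your bookkeeping in the full-faithfulness step contains a genuine error. A double functor $\bar F$ over $\Sq{F}$ consists of a functor $\A_1 = \Alg{R} \to \Alg{R'}$ lifting $F^\atwo$, \emph{together with} preservation of vertical identities and vertical composition. It is the mere lifting that corresponds, by structure--semantics, to the lax monad morphism $(\alpha,1)$: the identity bottom component and the upper triangle $\alpha_f \cdot L'_{Ff} = FLf$ are forced by the unit axiom, and the lower triangle is well-formedness of $(\alpha,1)$ as a square. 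Preservation of vertical identities is then automatic, since each identity carries a \emph{unique} algebra structure. What remains---preservation of vertical composition---corresponds exactly to the comultiplication compatibility making $(\alpha,1)$ a lax comonad morphism $\mathsf L \to \mathsf L'$, precisely via the mechanism of Remark~\ref{rk:2}. So your claim that the comonad half ``follows automatically from the interaction structure witnessed by right-connectedness'' is false: it is an independent condition on $\alpha$ (were it automatic, that clause in the definition of lax morphism in Section~\ref{sec:algebr-weak-fact} would be redundant), and it is the condition your dictionary must match with composition-preservation---which you have instead matched, incorrectly, with the monad-morphism axioms. You half-acknowledge this worry in your closing paragraph, but as planned the bijection does not close.

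The reconstruction of the essential image has a related circularity. From (i) and (ii) alone you have a monad on $\C^\atwo$ and a double category; you do \emph{not} yet have ``canonical fillers of the monad's structure squares,'' since canonical fillers presuppose $\Delta$ (equivalently, the composition of algebras), which is exactly what is being constructed. The correct order, as in Lemma~1 of \cite{Bourke2014AWFS1}, is: first show the unit and multiplication have identity codomain components (this does need an argument---e.g.\ right-connectedness gives $c \dashv i$, and combining with $\mathrm{cod} \dashv \iota$ and $F \dashv V_1$ yields $\C(cFf, -) \cong \C^\atwo(f, \iota(-)) \cong \C(B,-)$, whence $\mathrm{cod}(Rf) = B$ by Yoneda); then \emph{define} $\Delta_f$ by freeness as the unique algebra map $(\Delta_f, 1) \colon \fr f \to \fr f \cdot \fr{Lf}$ with prescribed precomposite, taking the double category's vertical composition as given data; and finally deduce the comonad and distributive-law axioms from associativity and unitality of that composition. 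Two smaller slips: $\alg 1_B$ is the unique algebra structure on $1_B$, not the free algebra on it (the free algebra on $1_B$ has underlying map $E1_B \to B$); and the unit square~\eqref{eq:4} is not the right-connectedness square~\eqref{eq:9}---the latter is $(f, 1_B) \colon \alg f \to \alg 1_B$, whose status as an algebra map requires its own (easy, but separate) verification.
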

\begin{Rk}
  For the most part, the means by which Theorem~\ref{thm:characterise}
  was established in~\cite{Bourke2014AWFS1} will not concern us;
  however, there is one key point we will require later. Lemma~1 of
  \cite{Bourke2014AWFS1} asserts that, for any \awfs
  $(\mathsf L,\mathsf R)$ on $\C$, the commutative square
  \begin{equation}
    \cd[@-0.5em]{
      Ef \ar[dd]_{\fr {f}} \ar[r]^{\c f} & E\l f
      \ar[d]^{\fr {\l f}} \\
      & Ef \ar[d]^{\fr f} \\
      B \ar[r]_{1} & B\rlap{ .}
    }\label{eq:13}
  \end{equation}
  is a square in the double category $\DAlg{R}$, thus a morphism of
  $\mathsf R$-algebras. This provides the means by which the
  \emph{comultiplication} of the comonad $\mathsf L$ may be recovered
  from the \emph{double categorical} structure of $\DAlg{R}$---as by
  freeness of $\fr f$, the map
  $(\Delta_{f},1) \colon \fr f \to \fr f \cdot \fr Lf$ is the unique
  $\mathsf R$-algebra map whose precomposition with the unit
  $f \to \r f$ is $(LLf,1) \colon f \to Rf \cdot RLf$.\label{rk:2}
\end{Rk}
%
%


\begin{Ex}
  For any category $\C$, the concrete double category
  $\dcat{SplEpi}(\C)$ of Example~\ref{ex:2} is 
  right-connected; whence by Theorem~\ref{thm:characterise}, it will
  be the double category of algebras of an \awfs on $\C$ whenever $U
  \colon \cat{SplEpi}(\C) \to \C^\atwo$ is strictly monadic. We may
  identify $U$ with $\C^j \colon \C^\Ss \to \C^\atwo$, where $\Ss$ is
  the \emph{free split epi}:
\begin{equation}\label{eq:splitEpi}
\cd{
1 \ar[r]^{m} \ar[dr]_{1} & 0 \ar[d]_{e} \ar[dr]^{me} \\
& 1 \ar[r]^{m} & 0}
\end{equation}
and where $j \colon \atwo \to \Ss$ is the evident inclusion. Thus $U$
strictly creates colimits, and so will be strictly monadic whenever
$\C$ is cocomplete enough to admit left Kan extensions along~$j$.
Using the Kan extension formula one finds that only binary coproducts
are required; the free split epi $\fr f$ on $f \colon A \to B$ is
$\spn{f,1} \colon A+B \to B$ with section $\iota_B$.
\end{Ex}
\begin{Ex}
  \label{ex:4}
  For any $2$-category $\C$, the concrete double category
  $\dcat{Lali}(\C)$ of Example~\ref{ex:2} inherits right-connectedness
  from $\dcat{SplEpi}(\C_0)$; whence by
  Theorem~\ref{thm:characterise}, it will be the double category of
  algebras of an \awfs on $\C$ whenever $U \colon \cat{Lali}(\C) \to
  (\C_0)^{\atwo}$ is strictly monadic. Now, we may identify $U$ with
  restriction
\[
\cat{2}\text-\Cat(j, \C) \colon \cat{2}\text-\Cat(\L, \C) \to \cat{2}\text-\Cat(\atwo, \C)
\]
along the inclusion $j \colon \atwo \to \L$ of $\atwo$ into the
\emph{free lali} $\L$---which has the same underlying category as the
free split epimorphism $\Ss$ of \eqref{eq:splitEpi} and a single
non-trivial 2-cell $1\Rightarrow me$; so as before, monadicity obtains
whenever $\C$ is cocomplete enough to admit left Kan extensions along
$j$. In this case the necessary colimits are \emph{oplax colimits} of
arrows; see~\cite[Section~4.2]{Bourke2014AWFS1}. On inverting the
non-trivial $2$-cell of $\L$, it becomes the free \emph{retract
  equivalence} $\L^g$, and from this, we obtain for any sufficiently
cocomplete $2$-category $\C$ the \awfs on $\C_0$ whose algebras are retract equivalences.
\end{Ex}

\begin{Ex}
  \label{sec:projective}
  A basic way of obtaining new weak factorisation systems from old is
  by \emph{projective lifting} along a functor. In the algebraic
  context, the construction of projective liftings is simplified by
  Theorem~\ref{thm:characterise}. Given an \awfs $(\mathsf L, \mathsf
  R)$ on $\D$ and functor $F \colon \C \to \D$ one can form the
  pullback of double categories to the left in:
  \begin{equation}
    \cd[@C+0.5em]{
      \dcat A \pushoutcorner \ar[r] \ar[d]_{V} & \DAlg{R} \ar[d]^{U^{\mathsf
          R}}\\
      \Sq{\C} \ar[r]^{\Sq{F}} & \Sq{\D}} \qquad \qquad \qquad
    \cd{
      \A_1 \pushoutcorner\ar[r] \ar[d]_{V_1} & \Alg{R} \ar[d]^{U^{\mathsf
          R}}\\
      \C^\atwo \ar[r]^{F^\atwo} & \D^\atwo\rlap{ .}
    }\label{eq:39}
  \end{equation}
  It is easily verified that the concrete
  $V \colon \mathbb A \to \dcat{Sq}(\C)$ so obtained satisfies all the
  hypotheses of Theorem~\ref{thm:characterise} except possibly for the
  existence of a left adjoint to $V_1 \colon \A_1 \to \C^\atwo$, as on
  the right; so whenever this adjoint can be shown to exist, 
  $\mathbb A$ will comprise the algebra double category of an \awfs
  $(\mathsf L', \mathsf R')$ on $\C$, the \emph{projective lifting} of
  $(\mathsf L, \mathsf R)$ along $F$. Conditions ensuring the
  existence of this adjoint are described in Propositions~13 and~14 of
  \cite{Bourke2014AWFS1}.\label{ex:5}
\end{Ex}
  


\subsection{A reformulation of right-connectedness}
\label{sec:intr-concr}
As should be clear from Theorem~\ref{thm:characterise} above,
right-connected concrete double categories play an important role in
the theory of \awfs. So far, we have described such double categories
and the maps between them in terms of a full sub-$2$-category of
$\Dbl^\atwo$---but in fact we can do better. The
right-connectedness of the concrete $V \colon \dcat A \to \Sq{\C}$ is
easily seen to be equivalent to the property that the codomain functor
$c \colon \A_1 \to \A_0$ is left adjoint left inverse for the
identities functor $i \colon \A_0 \to \A_1$; we will say that $\mathbb
A$ is \emph{right-connected} if this is the case. Similarly, the
fidelity of $V_1 \colon \A_1 \to \C^\atwo$ is equivalent to $\mathbb
A$ being \emph{locally preordered}, in the sense that squares are
determined by their boundaries.

In fact any right-connected locally preordered $\dcat A$ arises in this
way; the corresponding $V \colon \mathbb A \to \Sq{\C}$ has $\C =
\A_0$, and $V_1 \colon \A_1 \to \A_0^\atwo$ the functor classifying
the composite natural transformation $d \eta \colon d \Rightarrow c
\colon \A_1 \to \A_0$.
If $\mathbb B$ is also right-connected and locally preordered, then any
double functor $F \colon \mathbb A \to \mathbb B$ will as in
Example~\ref{ex:3} commute with the units of the adjunctions
$c_\mathbb A \dashv i_\mathbb A$ and $c_\mathbb B \dashv i_\mathbb B$;
whence $F$ extends to a concrete double functor
between the associated concrete double categories. The same argument
pertains to $2$-cells, giving that:
\begin{Lemma}
  \label{lem:1}
  The domain $2$-functor $\Dbl^\atwo \to \Dbl$ restricts to a
  $2$-equivalence between the full sub-$2$-category of $\Dbl^\atwo$ on
  the right-connected concrete double categories, and the full
  sub-$2$-category of $\Dbl$ on the right-connected locally preordered
  double categories.
\end{Lemma}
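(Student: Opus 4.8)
The plan is to prove Lemma~\ref{lem:1} by showing that the domain $2$-functor, when restricted as stated, is essentially surjective on objects, fully faithful on $1$-cells, and fully faithful on $2$-cells (i.e.\ locally an isomorphism on hom-categories); this is precisely what is needed for a $2$-equivalence. Much of the conceptual content has already been supplied in the discussion preceding the statement, so the work is to organise it and verify the $2$-categorical bookkeeping.

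First I would address \emph{essential surjectivity}. Given a right-connected locally preordered double category $\dcat A$, I would set $\C = \A_0$ and construct $V_1 \colon \A_1 \to \C^\atwo$ as the functor classifying the composite natural transformation $d\eta \colon d \Rightarrow c$, where $\eta$ is the unit of the adjunction $c \dashv i$ witnessing right-connectedness. The key points to check are that $V_1$ is faithful---which is exactly local preorderedness---and that the resulting $V \colon \dcat A \to \Sq{\C}$ is a concrete double functor which is again right-connected in the sense of the boundary-filling condition~\eqref{eq:9}; the latter follows because the unit $\eta$ provides, for each vertical $\alg f$, precisely the square exhibiting $\alg f$ over $\alg 1_B$. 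I would then confirm that applying the domain $2$-functor to this $V$ returns $\dcat A$ up to the required isomorphism, so that every object of the target lies in the image.

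For \emph{full faithfulness on $1$-cells and $2$-cells}, the essential observation has already been made in the text just before the statement: if $\dcat A$ and $\dcat B$ are both right-connected and locally preordered, then any double functor $F \colon \dcat A \to \dcat B$ automatically commutes with the units of the adjunctions $c_{\dcat A} \dashv i_{\dcat A}$ and $c_{\dcat B} \dashv i_{\dcat B}$---by the same uniqueness argument used for lalis in Example~\ref{ex:3}---and hence lifts uniquely to a concrete double functor over the appropriate square categories. I would spell out that this lift is unique (so the correspondence on $1$-cells is a bijection, not merely a surjection) and that the analogous statement for horizontal transformations $\gamma \colon F \Rightarrow G$ holds by the same reasoning applied levelwise, giving fullness and faithfulness on $2$-cells. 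Since the domain $2$-functor on concrete double categories simply forgets the structure map to $\Sq{\C}$, and the lift recovers that structure map canonically, the two assignments are mutually inverse.

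The main obstacle I anticipate is not any single deep step but rather the need to check that the passage $\dcat A \mapsto (V \colon \dcat A \to \Sq{\C})$ is genuinely functorial and that the two directions compose to identities \emph{on the nose} (or at least up to the coherent isomorphism a $2$-equivalence permits): one must verify that the classifying functor $V_1$ built from $d\eta$ is compatible with composition of double functors and with horizontal transformations, and that no choices made in forming $V_1$ obstruct naturality. Concretely, the delicate point is confirming that faithfulness of $V_1$ together with right-connectedness suffices to reconstruct \emph{all} the double-categorical data of $\dcat A$ from the concrete presentation---that is, that the squares of $\dcat A$ are exactly the commuting squares in $\C$ compatible with the vertical structure---so that the round trip is the identity. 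Once this reconstruction is pinned down, the remaining verifications are the routine diagram-chases that I would not grind through here.
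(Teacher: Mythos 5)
Your proposal takes the same route as the paper, whose ``proof'' of Lemma~\ref{lem:1} is precisely the discussion preceding it: essential surjectivity via the reconstruction of $V_1 \colon \A_1 \to \A_0^\atwo$ as the classifier of $d\eta \colon d \Rightarrow c$, and $2$-full-faithfulness via the observation that double functors (and, levelwise, transformations) between right-connected locally preordered double categories automatically commute with the units of the lali adjunctions $c \dashv i$, exactly as morphisms of lalis do in Example~\ref{ex:3}, and hence lift uniquely to concrete double functors.

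One correction to the ``delicate point'' you single out in your final paragraph: the round-trip identity does \emph{not} require that the squares of $\dcat A$ be exactly the commuting squares of $\C$ compatible with the vertical structure. Read as a reconstruction claim, that would be \emph{fullness} of $V_1$, which fails in the motivating examples --- a commuting square between split epimorphisms need not commute with the chosen sections, so $V_1$ for $\dcat{SplEpi}(\C)$ is faithful but not full. What the round trip actually needs is only that a right-connected concrete $V \colon \dcat A \to \Sq{\C}$ is \emph{determined} by its domain, and this is immediate: concreteness forces $\C = \A_0$, and since $V$ sends the unit square $\eta_{\alg f} \colon \alg f \to \alg 1_B$ (whose bottom edge is $1_B$, by the triangle identity for an adjunction with identity counit) to a commuting square of $\C$, commutativity forces its top edge $d\eta_{\alg f}$ to equal $V_1(\alg f)$; hence $V_1$ coincides with the classifier of $d\eta$, on squares as well by concreteness. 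With that verification substituted for your anticipated one, your plan goes through --- and in fact yields surjectivity on objects on the nose (the constructed $V$ has domain literally $\dcat A$), so the restricted domain $2$-functor is even an isomorphism of $2$-categories, a fortiori the stated $2$-equivalence.
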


\section{Categories of weak maps}
We now turn to the central object of study of this paper: the
categories of \emph{weak maps} associated to an algebraic weak
factorisation system. It turns out that these may be defined in a
number of different ways---the first and simplest of which is given
with reference to \emph{cofibrant replacement} comonads or
\emph{fibrant replacement} monads.
\subsection{Weak maps as Kleisli categories}
\label{sec:fibr-cofibr-objects}
If $(\mathsf L, \mathsf R)$ is an \awfs on $\C$, then its comonad
$\mathsf L$ restricts to a comonad on each coslice category $X / \C$;
in particular, if $X = 0$ is initial in $\C$, then we obtain a comonad
$\cof$ on $0 / \C \cong \C$, which we may call the \emph{cofibrant
  replacement comonad} of $(\mathsf L, \mathsf R)$. 
We define the category of \emph{left weak maps} $\Wkl$ to be the
co-Kleisli category of $\mathsf Q$.
Dually, if $\C$ has a terminal object, then the monad $\mathsf R$ on
$\C^\atwo$ induces a \emph{fibrant replacement monad} on $\C$,
whose Kleisli category is the category $\Wkr$ of
\emph{right weak maps}. 

\begin{Exs}
  \label{ex:1}
\begin{enumerate}[(i)]
\item Let $\C$ be any $2$-category admitting the \awfs for lalis of
  Example~\ref{ex:3} above. For each $X \in \C$, this \awfs induces a
  \emph{coslice} \awfs on $X/\C$ by projectively lifting, as in
  Example~\ref{sec:projective}, along the forgetful functor $X/\C \to
  \C$. Direct calculation shows that the category of left weak maps
  for this \awfs is the lax coslice $X /\!/ \C$, wherein morphisms
  $(A,a) \rightsquigarrow (B,b)$ are lax triangles
\[
\cd{ & X \ar[dl]_a \ar[dr]^b \ltwocell{d}{\phi} \\A \ar[rr]_f & &
  B\rlap{ .} }\] By starting from the \awfs for retract equivalences
rather than that for lalis, we obtain instead the \emph{pseudo}
coslice category. \vskip0.25\baselineskip
\item In~\cite{Garner2010Homomorphisms} was constructed an \awfs on
  $\cat{Bicat}_s$, the category of small bicategories and strict
  morphisms---those preserving composition and identities on the nose---whose associated
  left weak maps are the \emph{homomorphisms}
  of bicategories---those preserving composition and identities up to coherent isomorphism.
  \cite{Garner2010Homomorphisms} also exhibits trihomomorphisms
  between tricategories as weak maps, and, more generally, shows that
  for any \emph{globular operad} $\O$ in the sense
  of~\cite{Batanin2011Algebras}, there is an \awfs on the category of
  $\O$-algebras and strict $\O$-algebra maps whose category of left
  weak maps provides a suitable notion of ``weak morphism'' of
  $\O$-algebras.

\vskip0.25\baselineskip
\item Let $\C$ be a category with pullbacks, and $\M$ a class of
  monomorphisms therein which contains all isomorphisms and is stable
  under composition and pullback. Suppose moreover that there is an
  \emph{classifying $\M$-map}---an $\M$-map $t \colon 1
  \rightarrowtail U$ which is terminal in the category of $\cat
  M_\mathrm{pb}$ of $\M$-maps and pullback squares---and that $t$ is
  exponentiable in $\C$. Under this assumption, it was shown
  in~\cite[Section~4.4]{Bourke2014AWFS1} that there is an \awfs on $\C$
  whose category of $\mathsf L$-coalgebras is $\cat{M}_\mathrm{pb}$,
  and whose fibrant replacement monad is the \emph{partial $\M$-map
    classifier monad} of~\cite[Chapter~3]{Rosolini1986Continuity}:
  \[
  \fib = \C \xrightarrow{\Pi_t} \C / U \xrightarrow{\Sigma_U}
  \C\rlap{ .}
  \]
  The associated category of right weak maps is the \emph{$\M$-partial
    map category}, whose morphisms $A \rightsquigarrow B$ are
  isomorphism-classes of spans $A \leftarrowtail A' \to B$ with left
  leg in $\M$, and whose composition is by pullback.\vskip0.25\baselineskip
\item In Section~\ref{sec:two-dimens-monad} we will see that, if
  $\mathsf T$ is an accessible $2$-monad on a complete and cocomplete
  $2$-category $\C$, then there are \awfs on $\Alg{T}_s$, the category
  of $\mathsf T$-algebras and \emph{strict} algebra morphisms, whose
  categories of left weak maps have as morphisms the \emph{pseudo} or
  \emph{lax} algebra morphisms.
  \vskip0.25\baselineskip
\item In Section~\ref{sec:homological} we will see that, if $\C$ is a
  cocomplete dg-category and $\mathsf T$ a sufficiently cocontinuous
  dg-monad thereon, then there is an awfs on $\TAlgs$, the category of
  $\mathsf T$-algebras and strict morphisms whose category of weak
  maps is the category of \emph{homotopy-coherent} $\mathsf T$-algebra
  morphisms. For example, if $\A$ is a small dg-category and $\D$ a
  cocomplete one, then there is an \awfs on the functor category
  $[\A, \D]$ for which the left weak maps are \emph{homotopy-coherent
    natural transformations} between dg-functors in the sense
  of~\cite[\S3]{Tamarkin2007What}.
\end{enumerate}
\end{Exs}

\subsection{The universal property of the category of weak maps}
\label{sec:univ-prop-categ}
Our first main result expresses that the category of left weak maps arises
by ``freely splitting each $\mathsf R$-map in $\C$''. Of course, there
is a dual result for right weak maps---but henceforth we 
leave it to the reader to formulate such dual cases.
\begin{Thm}
\label{thm:WeakMaps}
Let $(\mathsf L, \mathsf R)$ be an \awfs on a category $\C$ with an
initial object. There are isomorphisms of categories
\begin{equation}
\Cat(\Wkl, \D) \cong \Dbl(\DAlg{R}, \dcat{SplEpi}(\D))\rlap{ ,}\label{eq:12}
\end{equation}
$2$-natural in $\D$, that mediate between extensions of a functor $F
\colon \C \to \D$ through the co-Kleisli category $\Wkl$
as on the left below, and liftings of $F$ to a concrete double functor
as on the right.
\begin{equation}
  \cd[@C-0.5em]{
    & \Wkl \ar@{.>}[dr]^{G} & & & \DAlg{R} \ar[d] \ar@{.>}[r]^-{H} & \dcat{SplEpi}(\D) \ar[d] \\ 
    \C \ar[ur]^{\text{\textnormal{cofree}}} \ar[rr]_{F} & & \D & & \Sq{\C}
    \ar[r]_-{\Sq{F}} & \Sq{\D}\rlap{ .} 
  }\label{eq:23}
\end{equation}
\end{Thm}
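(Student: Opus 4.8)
The plan is to reformulate both sides of~\eqref{eq:12} and then match them. On the left, I will use the universal property of the co-Kleisli category: for the comonad $\mathsf Q = (Q, \epsilon, \delta)$, a functor $G \colon \Wkl \to \D$ is the same thing as a functor $F \colon \C \to \D$ equipped with a \emph{right $\mathsf Q$-comodule structure}, namely a natural transformation $\theta \colon F \Rightarrow FQ$ satisfying $F\epsilon\cdot\theta = 1_F$ and $F\delta\cdot\theta = \theta Q\cdot\theta$; here $F = G\circ\text{cofree}$, and natural transformations of such $G$'s correspond to comodule morphisms. This is the formal dual of the standard description of functors out of a Kleisli category, and it is visibly $2$-natural in $\D$. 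On the right, both $\DAlg R$ and $\dcat{SplEpi}(\D)$ are right-connected and locally preordered, so by Lemma~\ref{lem:1} every double functor $H \colon \DAlg R \to \dcat{SplEpi}(\D)$ is automatically concrete: writing $F$ for its object-part, $H$ amounts to a choice, for each $\mathsf R$-algebra $\alg g \colon A \to B$, of a section $\sigma_{\alg g}$ of $Fg$, functorial with respect to the units, the vertical composition and the squares of $\DAlg R$. Thus the theorem reduces to a $2$-natural isomorphism between $\mathsf Q$-comodules in $[\C, \D]$ and such coherent families of sections, compatible with the respective underlying functors $F$.

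I will pass between the two descriptions as follows. Write $\alg{\epsilon}_A$ for the free $\mathsf R$-algebra structure on the counit $\epsilon_A = R(0\to A) \colon QA \to A$. Given a coherent family $(\sigma_{\alg g})$, set $\theta_A \defeq \sigma_{\alg{\epsilon}_A} \colon FA \to FQA$. The counit law for $\theta$ is exactly that $\sigma_{\alg{\epsilon}_A}$ sections $F\epsilon_A$; naturality of $\theta$ follows by applying $H$ to the algebra maps witnessing the functoriality of $A \mapsto \alg{\epsilon}_A$; and coassociativity follows by applying $H$ to the algebra map $(\delta_A, 1) \colon \alg{\epsilon}_A \to \alg{\epsilon}_A\cdot\alg{\epsilon}_{QA}$ of Remark~\ref{rk:2} and comparing its square-compatibility, $\sigma_{\alg{\epsilon}_A\cdot\alg{\epsilon}_{QA}} = F\delta_A\cdot\theta_A$, with the composition law $\sigma_{\alg{\epsilon}_A\cdot\alg{\epsilon}_{QA}} = \sigma_{\alg{\epsilon}_{QA}}\cdot\sigma_{\alg{\epsilon}_A} = \theta_{QA}\cdot\theta_A$. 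Conversely, given a comodule $(F, \theta)$ and an arbitrary $\mathsf R$-algebra $\alg g \colon A \to B$, I take the canonical diagonal filler $j_{\alg g} \colon QB \to A$ of the square whose left side is the cofibrant-replacement $\mathsf L$-coalgebra $0 \to QB$ and whose right side is $\alg g$, and put $\sigma_{\alg g} \defeq Fj_{\alg g}\cdot\theta_B$; since $g\cdot j_{\alg g} = \epsilon_B$, the counit law gives $Fg\cdot\sigma_{\alg g} = 1$, so $\sigma_{\alg g}$ is indeed a section of $Fg$.

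The substance of the proof is to verify that this second assignment respects the unit, composition and square axioms for a double functor, and that the two passages are mutually inverse; each of these reduces to an identity for canonical fillers. The unit axiom reduces to $j_{\alg 1_A} = \epsilon_A$, whence $\sigma_{\alg 1_A} = F\epsilon_A\cdot\theta_A = 1$. The composition axiom, after using naturality of $\theta$ and then coassociativity, reduces to $j_{\alg h\cdot\alg g} = j_{\alg g}\cdot Qj_{\alg h}\cdot\delta_C$---precisely the requirement, built into the definition of composition of $\mathsf R$-algebras, that the canonical lift against $\alg h\cdot\alg g$ be obtained by lifting first against $\alg h$ and then against $\alg g$. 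The square axiom, again via naturality of $\theta$, reduces to naturality of the filler, $j_{\alg{g'}}\cdot Qb = a\cdot j_{\alg g}$ for an algebra map $(a,b) \colon \alg g \to \alg{g'}$. For the round-trips one uses in addition that $(j_{\alg g}, 1) \colon \alg{\epsilon}_B \to \alg g$ is itself an $\mathsf R$-algebra map and that $j_{\alg{\epsilon}_A} = 1_{QA}$. I expect this filler bookkeeping---establishing the unit, composition and naturality identities for the canonical diagonal fillers of an \awfs, most of which are consequences of the uniqueness characterisations recorded above and in~\cite{Bourke2014AWFS1}---to be the main obstacle. Once the object-level bijection is in place, its extension to $2$-cells and its $2$-naturality in $\D$ are formal, following from naturality of the fillers, from the $2$-naturality already noted on the comodule side, and from Lemma~\ref{lem:1}.
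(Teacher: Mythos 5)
Your proposal is correct and follows essentially the same route as the paper's proof: both reduce via Lemma~\ref{lem:1} to a bijection between extensions and liftings, identify extensions of $F$ with coalgebras $\theta \colon F \Rightarrow FQ$ for the comonad $(\thg)\cdot\mathsf Q$ on $[\C,\D]$ via Street's formal theory, extract $\theta$ from a double functor as the splitting of the free algebra $\fr{!_A}$ (with coassociativity coming from the algebra square of Remark~\ref{rk:2}, exactly as in the paper's~\eqref{eq:20}), and reconstruct the double functor by precomposing fillers with $\theta$, the crux being the composition identity for fillers proved by freeness of $\fr{!_C}$. The only cosmetic difference is that you define $j_{\alg g}$ as the canonical filler of the square with left leg the cofibrant-replacement coalgebra, while the paper defines $\phi_{\alg g}$ as the top component of the free extension of $(!_A,1_B)$ along the unit---these coincide by the comonad counit law, so your verifications line up with the paper's.
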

\begin{proof}
  It is easy to see that $\dcat{SplEpi}(\thg) \colon \Cat \to \Dbl$
  preserves all $2$-dimensional limits; in particular we have
  $\dcat{SplEpi}(\D^\atwo) \cong \dcat{SplEpi}(\D)^\atwo$, so that to
  obtain isomorphisms of categories~\eqref{eq:12} it suffices to
  exhibit ones of underlying \emph{sets}. As $\DAlg{R}$ and
  $\dcat{SplEpi}(\D)$ are right-connected and locally preordered, this is
  equivalent by Lemma~\ref{lem:1} to giving a bijection, natural in
  $\D$, between extensions $G$ and liftings $H$ as
  in~\eqref{eq:23}.
  
  Suppose first that we are given a lifting of $F$ to a double
  functor as on the right. For each $B \in \B$, let $!_B \colon 0 \to B$
  denote the unique map; then for any $\mathsf R$-algebra $\alg f
  \colon A \to B$, the map $(!_A, 1_B) \colon !_B \to f$ in $\C^\atwo$
  induces an $\mathsf R$-algebra morphism as on the left below.
  Applying~\eqref{eq:23} yields a morphism of split epis in $\D$ as on
  the right (observing that the underlying map of $\fr !_B$ is the
  counit $\epsilon_B \colon QB \to B$).
  \begin{equation}
\cd{
QB \ar[d]_{\fr{!_B}} \ar[r]^{\phi_{\alg f}} & A \ar[d]^{\alg f}\\
B \ar[r]_{1} & B
} \qquad \qquad
\cd{
FQB \ar@<-3pt>[d]_{F\epsilon_B} \ar@<3pt>@{<--}[d]^{\alpha_B}
\ar[r]^{F\phi_{\alg f}} & FA \ar@<-3pt>[d]_{F f}
\ar@<3pt>@{<--}[d]^{s}\\
FB \ar[r]_{1} & FB
}
\label{eq:27}
\end{equation}
By the upwards commutativity we have $s = F \phi_{\alg f} \cdot
\alpha_B$, so that the lifting~\eqref{eq:23} is uniquely determined by
giving the $\alpha_B$'s. These maps are the components of a
natural transformation $\alpha \colon F \to FQ$ with $F \epsilon \cdot
\alpha = 1 \colon F \to FQ \to F$; and since applying~\eqref{eq:23} to
the algebra square~\eqref{eq:13} below left yields the one below right:
\begin{equation}
  \label{eq:20}
  \cd[@R-0.1em@-0.2em]{
QB \ar[dd]_{\fr !_B} \ar[r]^{\Delta_{B}} & QQB 
\ar[d]^{\fr{\l !_B}= \fr !_{QB}} \\
& QB \ar[d]^{\fr !_B } \\
B \ar[r]_1 & B} \qquad
  \cd[@R-0.1em@-0.2em]{
FQB \ar@<-3pt>[dd]_{F \epsilon_B} \ar@<3pt>@{<--}[dd]^{\alpha_B} \ar[r]^{F\Delta_{B}} & FQQB 
\ar@<-3pt>[d]_{F\epsilon_{QB}} \ar@<3pt>@{<--}[d]^{\alpha_{QB}} \\
& FQB \ar@<-3pt>[d]_{F \epsilon_B} \ar@<3pt>@{<--}[d]^{\alpha_B} \\
FB \ar[r]_1 & FB\rlap{ ,}}
\end{equation}
we conclude that also $\alpha Q \cdot \alpha = F \Delta \cdot \alpha
\colon F \to FQ \to FQQ$. Thus $\alpha \colon F \to FQ$ exhibits $F$
as a coalgebra for the comonad $(\thg) \cdot \mathsf Q$ on the functor
category $[\C, \D]$; and by~\cite[\S 5]{Street1972The-formal}, to give
such a coalgebra structure is equally to give an extension of $F$ to a
functor $\Wkl = \Kl{Q} \to \D$. The process just described is clearly natural
in $\D$; so to complete the proof, it suffices to show
that any coalgebra $\alpha \colon F \to FQ$ arises from a double
functor in this way.

For this, given a coalgebra $\alpha$, we must show that the
lifting~\eqref{eq:23} which sends an $\mathsf R$-algebra $\alg f
\colon A \to B$ to the split epi $(Ff, F\phi_{\alg f} \cdot \alpha_B)
\colon FA \to FB$ is well-defined. The only point that needs checking
is that composition of algebras is preserved; thus given also $\alg g
\colon B \to C$, we must show that $F\phi_{\alg f} \cdot \alpha_B
\cdot F\phi_{\alg g} \cdot \alpha_C = F\phi_{\alg g \alg f} \cdot
\alpha_C$. The left-hand side is equal to $F\phi_{\alg f} \cdot
FQ\phi_{\alg g} \cdot \alpha_{QC} \cdot \alpha_C = F\phi_{\alg f}
\cdot FQ\phi_{\alg g} \cdot F \Delta_C \cdot \alpha_C$ by naturality
 and the comultiplication axiom for $\alpha$. So it suffices to show
 that $\phi_{\alg f} \cdot Q \phi_{\alg g} \cdot \Delta_C = \phi_{\alg
   g \alg f}$. For this, we consider the following diagram.
 \begin{equation*}
  \cd{
QC \ar[dd]_{\fr !_C} \ar[r]^{\Delta_{C}} & QQC \ar[r]^{Q\phi_{\alg g}}
\ar[d]^{\fr{\l !_C}} &
QB \ar[r]^{\phi_{\alg f}} \ar[d]^{\fr !_B} & A \ar[d]^{\alg f} \\
& QC \ar[d]^{\fr !_C} \ar[r]^{\phi_{\alg g}} & B \ar[d]^{\alg g}
\ar[r]_1 & B \ar[d]^{\alg g} \\
C \ar[r]_1 & C \ar[r]_1 & C \ar[r]_1 & C\rlap{ .}}
\end{equation*}
Each small square is an $\mathsf R$-algebra map, whence the large
rectangle is too; but since this rectangle precomposes with the unit
$!_C \to \r !_C$ to yield $(!_A, 1_C) \colon !_C \to gf$, it must by
freeness of $\fr !_C$ be the map $(\phi_{\alg g \alg f}, 1)$.
Comparing domain-components yields that $\phi_{\alg f} \cdot Q
\phi_{\alg g} \cdot \Delta_C = \phi_{\alg g \alg f}$ as required.
\end{proof}

\subsection{Weak maps as weighted colimits}
\label{sec:weak-maps-as}
In order to define the category of left weak maps above, we were
forced to assume the existence of an initial object; and while this is
scarcely a restriction in practice, it is nonetheless a little
inelegant. Theorem~\ref{thm:WeakMaps} suggests a way of removing this
restriction: we \emph{redefine} the category of left weak maps of an
\awfs $(\mathsf L, \mathsf R)$ to be any category which, as
in~\eqref{eq:12}, represents the $2$-functor $\Dbl(\DAlg{R},
\dcat{SplEpi}(\thg)) \colon \Cat \to \Cat$. We shall adopt this
broader definition henceforth; the problem now becomes one of
determining when $\Wkl$ exists. Theorem~\ref{thm:WeakMaps} tells us
that this is so whenever $\C$ has an initial object; what we now show
is that this is in fact \emph{always} so: $\Wkl$ can be computed,
under no restrictions on $\C$, as a certain weighted colimit in
$\Cat$. 

To see this, let $\Delta_2$ be the subcategory
$$\xy
(0,0)*+{[0]}="00"; (30,0)*+{[1]}="10";(60,0)*+{[2]}="20";
{\ar@<1.2ex>^{\delta_{1}} "00"; "10"}; 
{\ar@<0ex>|{\sigma_{0}} "10"; "00"}; 
{\ar@<-1.2ex>_{\delta_{0}} "00"; "10"}; 
{\ar@<1.2ex>^{\delta_{2}} "10"; "20"}; 
{\ar@<0ex>|{\delta_{1}} "10"; "20"}; 
{\ar@<-1.2ex>_{\delta_{0}} "10"; "20"};
\endxy$$
of the simplicial category, where
$[n] = \{0 \leqslant \dots \leqslant n\}$ and $\delta$ and $\sigma$
are the usual coface and codegeneracy operators. Identifying each
internal category in $\Cat$ with its truncated nerve gives a full
embedding $\Dbl \to [\Delta_2^\op, \Cat]$ of $2$-categories; it
follows that we may 
view the
defining isomorphism~\eqref{eq:12} as one
\[
\Cat(\Wkl, \D) \cong [\Delta_2^\op, \Cat](\DAlg{R},
\dcat{SplEpi}(\D))\rlap{ .}
\]
Now, the $2$-functor $\dcat{SplEpi}(\thg) \colon \Cat \to
[\Delta_2^\op, \Cat]$ sends a category $\D$ to the truncated
simplicial diagram as to the left of
\[
\cd[@C+0.2em]{ \D \ar[r]|i \ar@{<-}@<1.2ex>[r]^d
  \ar@{<-}@<-1.2ex>[r]_c & \D^\Ss \ar@{<-}[r]|(0.4)m
  \ar@<1.2ex>@{<-}[r]^(0.4)p \ar@<-1.2ex>@{<-}[r]_(0.4)q & \D^\Ss
  \times_\D \D^\Ss } \qquad \qquad \cd[@C+0.2em]{ \aone \ar@{<-}[r]|i
  \ar@<1.2ex>[r]^d \ar@<-1.2ex>[r]_c & \Ss \ar[r]|(0.4)m
  \ar@<1.2ex>[r]^(0.4)p \ar@<-1.2ex>[r]_(0.4)q & \Ss +_\aone \Ss
  \rlap{ } }\] where $\Ss$ is the free split epi as
in~\eqref{eq:splitEpi}. We may see this diagram as induced by homming
into $\D$ from the cosimplicial diagram $S \colon \Delta_2 \to \Cat$
right above; in other words, we have $\dcat{SplEpi}(\thg) \cong
\Cat(S,1) \colon \Cat \to [ \Delta_2^\op, \Cat]$. Since $\Cat(S,1)$ has a left adjoint sending $X
\in [\Delta_2^\op, \Cat]$ to the weighted colimit $S \star X$, we
conclude that we have a $2$-representation
\begin{equation}\label{eq:splitting}
\Cat(S \star \DAlg{R}, \D) \cong \Dbl(\DAlg{R},
\dcat{SplEpi}(\D))\rlap{ ,}
\end{equation}
so that the category of left weak maps of the \awfs $(\mathsf L,
\mathsf R)$ may be constructed under no assumptions on $\C$ as the
weighted colimit $S \star \DAlg{R}$.

Unfolding the description, we find that for a right-connected double
category $\dcat A$, the colimit $S \star \dcat A$ may be obtained by
first taking a coinserter $v \colon \A_0 \to \B$ of $c, d \colon \A_1
\rightrightarrows \A_0$, with universal $2$-cell $\theta \colon vc
\Rightarrow vd$, and then taking three coequifiers: one making $\theta$
 into a section of $vd\eta \colon vd \Rightarrow vc$, and two imposing the cocycle
conditions $\theta m = \theta p \cdot \theta q \colon vcq \Rightarrow
vdp$ and $\theta i = 1$.
Thus $S \star \mathbb A$ is the category obtained from $\A_0$ by
freely adjoining a morphism ${\alg a}^\ast \colon X \to A$ for each
$\alg a \colon A \to X$ in $\mathbb A$ satisfying $a \cdot {\alg
  a}^\ast = 1$, $(\alg b \cdot \alg a)^\ast = {\alg a}^\ast \cdot
{\alg b}^\ast$ and ${\alg 1}^\ast = 1$, and $u \cdot {\alg a}^\ast = {\alg b}^\ast \cdot v$ for
all $(u,v) \colon \alg a \to \alg b$ in $\mathbb A$.

There is some redundancy in the above description. Right connectedness
of $\mathbb A$ ensures that any $\theta \colon vc \Rightarrow vd$ is
automatically a section of $vd\eta$ so that, in fact, only the two
cocycle conditions need to be imposed. The 2-categorically minded
reader will observe that these two conditions alone present
$S \star \dcat A$ as the \emph{codescent object} of the
\emph{opposite} double category $\mathbb A^{\op}$---in which $d$ and
$c$ are interchanged. Although codescent objects are the better known
colimit, the universal property expressed
in~\eqref{eq:splitting} is the more useful one in the context of
\awfs.

Note that applying this construction to a locally small category may
yield one that is no longer locally small; by contrast, the
construction of categories of weak maps via cofibrant replacement
always preserves local smallness. This is analogous to the fact that
that the homotopy category of a locally small model category is again
locally small, while the localisation of a category at an arbitrary
collection of morphisms need not be so.

\subsection{Weak maps as spans}
We have already mentioned the analogy between the weak map
construction, which freely adds sections for $\mathsf R$-algebras, and
the better known construction of the \emph{localisation} $\C[\W^{-1}]$
of a category $\C$ at a class of morphisms $\W$: here, rather than
freely adding a section to each map, we add an inverse. Under certain
hypotheses~\cite{Gabriel1967Calculus}, the morphisms of $\C[\W^{-1}]$
from $A$ to $B$ can be viewed as equivalence classes of spans $A
\leftarrow X \to B$ whose left leg belongs to $\W$. We now show that
under similar hypotheses, the category of left weak maps can
also be described in such terms.

Let $\C$ be a category with pullbacks and $U\colon \mathbb A \to
\Sq{\C}$ a pullback-stable concrete double category over $\C$. We
begin by forming a bicategory $\cat{Span}(\dcat{A})$ that enhances the usual
bicategory of spans in $\C$. Objects are those of $\C$, while
morphisms from $A$ to $B$ are \emph{$\mathbb{A}$-spans}: that is, spans
\begin{equation}
\cd{A & X \ar[l]_{\b{a}} \ar[r]^{f} & B}
\label{eq:1}
\end{equation}
 in $\C$ whose left leg has the structure of a vertical arrow of
$\dcat A$. The 2-cells $(\alg a, f) \to (\alg b, g)$ are
\emph{$\mathbb{A}$-span maps}: that is, span maps
\begin{equation}
\cd{& X \ar[dl]_{\b{a}} \ar[d]^{r} \ar[dr]^{f}\\
A & Y\ar[l]^{\b{b}} \ar[r] _{g} & B}
\label{eq:2}
\end{equation}
in $\C$ for which $(r,1) \colon \b{a} \to \b{b}$ is a square of $\dcat
A$. The identity morphism at $A$ is $(\b{1},1) \colon A \to A$; while
the composite of $(\b{a},f) \colon A \to B$ and $(\b{b},g) \colon B
\to C$ is obtained by first composing the underlying spans in $\C$ by
pullback:
\begin{equation}
\xy
(12,12)*+{X \times_{B} Y}="f0"; 
(0,0)*+{X}="a0"; (-12,-12)*+{A}="b0";(12,-12)*+{B}="c0";
{\ar_{\b{a}} "a0"; "b0"}; 
{\ar^{f} "a0"; "c0"}; 
(24,0)*+{Y}="d0";(36,-12)*+{C}="e0";
{\ar_{\b{b}} "d0"; "c0"}; 
{\ar^{g} "d0"; "e0"}; 
{\ar_{\b{p}} "f0"; "a0"}; 
{\ar^{q} "f0"; "d0"}; 
\endxy
\label{eq:3}
\end{equation}
and then using the fact that $U \colon \A_{1} \to \C^{\atwo}$ is a
discrete pullback-fibration to induce a structure of vertical arrow on
$p$ as displayed; this now allows us to define the composite $(\alg b,
f) \cdot (\alg a, f)$ to be $(\b{a}\cdot \b{p},g\cdot q)$.
The remaining aspects of the bicategory structure---$2$-cell
composition, associativity and unit constraints, and coherence
axioms---are as for the usual bicategory of spans. The additional
facts to be established are that various span maps are in fact $\dcat
A$-span maps, and here one makes full use of the fact that $U \colon
\A_{1} \to \C^{\atwo}$ is a discrete pullback-fibration.

\newcommand{\Wk}{\C[\dcat A^*]}

Our present interest lies not in the above bicategory, but in a
quotient of it. Each bicategory $\C$ gives rise a category $\pi_{1}\C$
with the same objects as $\C$ and with $(\pi_{1}\C)(X,Y)$ the set of
connected components of the category $\C(X,Y)$; we define $\Wk$ to be
the category $\pi_{1}(\cat{Span}(\dcat{A}))$. So objects of $\Wk$ are
those of $\C$, while maps $A \to B$ are equivalence classes of
$\mathbb{A}$-spans, where $(\b{a},f) \sim (\b{b},g) \colon A \to B$
just when they can be connected by a zigzag of $\dcat A$-span
maps.\begin{footnote}{For those $\dcat A$ arising in practice, it is
    often the case that $V_1 \colon \A_{1} \to \C^{\atwo}$ creates
    pullbacks---for example when it is monadic. This ensures that each
    hom-category of $\cat{Span}(\dcat A)$ has pullbacks, from which it
    follows that that $(\b{a},f) \colon A \to B$ and
    $(\b{b},g) \colon A \to B$ are equal in $\Wk$ if and only if there
    exists a third $\mathbb{A}$-span $(\b{c},h) \colon A \to B$ and
    $\mathbb A$-span maps $(\b{c},h) \to (\b{a},f)$ and
    $(\b{c},h) \to (\b{b},g)$.}\end{footnote}
%
There is an evident functor $J \colon \C \to \Wk$ which is the
identity on objects and sends $f \colon A \to B$ to $[\b{1},f] \colon
A \to B$. Note that, as in the preceding section, the category
$\C[\dcat A^\ast]$ may not be locally small even if $\C$ is so.

The construction of $\C[\dcat A^\ast]$ and the proof of the
following result have their origin in the 2-categorical
constructions of Section~4.2 and Theorem~21 of
\cite{Bourke2014Two-dimensional}.

\begin{Thm}\label{thm:pullback}
  Let $\C$ be a category with pullbacks and $\dcat{A}$ a
  pullback-stable right-connected concrete double category over $\C$.
  There are $2$-natural isomorphisms of
  categories $$\Cat(\Wk,\D) \cong
  \Dbl(\dcat{A},\dcat{SplEpi}(\D))$$ exhibiting $\Wk$ as
  $S \star \dcat A$. These isomorphisms mediate between extensions of
  a functor $F \colon \C \to \D$ through $\Wk$ as on the left
  below, and liftings of $F$ to a concrete double functor as on the
  right:
\begin{equation}
  \cd{
    & \Wk \ar@{.>}[dr]^{G} & & & \dcat{A} \ar[d] \ar@{.>}[r]^-{H} & \dcat{SplEpi}(\D) \ar[d] \\     \C \ar[ur]^{J} \ar[rr]_{F} & & \D & & \Sq{\C}
    \ar[r]_-{\Sq{F}} & \Sq{\D}\rlap{ .}   }\label{eq:Ext}
\end{equation}
\end{Thm}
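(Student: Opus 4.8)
The plan is to verify directly that $\Wk = \pi_{1}(\cat{Span}(\dcat{A}))$ represents the $2$-functor $\Dbl(\dcat A, \dcat{SplEpi}(\thg)) \colon \Cat \to \Cat$. Since the $2$-representation~\eqref{eq:splitting} already exhibits $S \star \dcat A$ as representing this same $2$-functor, uniqueness of representing objects will then identify $\Wk$ with $S \star \dcat A$, as claimed. As in the proof of Theorem~\ref{thm:WeakMaps}, I will exhibit a bijection, $2$-natural in $\D$, between liftings $H$ of $F \colon \C \to \D$ to a concrete double functor $\dcat A \to \dcat{SplEpi}(\D)$ and extensions $G$ of $F$ along $J \colon \C \to \Wk$; since $\dcat A$ and $\dcat{SplEpi}(\D)$ are locally preordered, such a lifting $H$ amounts to no more than a coherent family of split epis (Lemma~\ref{lem:1}), so it suffices to match underlying functions and the correspondence will automatically respect the triangles of~\eqref{eq:Ext}.

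Given such a lifting $H$, each vertical arrow $\b a \colon X \to A$ of $\dcat A$ is sent to a split epi $(F\b a, s_{\b a})$ in $\D$, whose section $s_{\b a} \colon FA \to FX$ satisfies $F\b a \cdot s_{\b a} = 1$. I define $G$ to agree with $F$ on objects and to send an $\mathbb A$-span $(\b a, f) \colon A \to B$ to $G[\b a, f] = Ff \cdot s_{\b a} \colon FA \to FB$. For well-definedness on $\sim$-classes, note that $H$ sends a $\dcat A$-span map $(r,1) \colon (\b a, f) \to (\b b, g)$ to a morphism of split epis of the shape~\eqref{eq:6}, whose upward commutativity reads $s_{\b b} = Fr \cdot s_{\b a}$; hence $Fg \cdot s_{\b b} = Fg \cdot Fr \cdot s_{\b a} = Ff \cdot s_{\b a}$, and $G$ takes equal values along any zigzag of span maps.

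The substance of the argument is the functoriality of $G$. Preservation of identities is immediate, as $H$ preserves the identity vertical arrow and so $s_{\b 1_A} = 1_{FA}$. For composition, recall that the composite of $(\b a, f) \colon A \to B$ with $(\b b, g) \colon B \to C$ is formed as in~\eqref{eq:3} by pullback, pullback-stability inducing a vertical arrow $\b p$ over the left projection for which $(q, f) \colon \b p \to \b b$ is a \emph{cartesian} square of $\dcat A$; the composite span is then $(\b a \cdot \b p, g \cdot q)$. Because $H$ preserves vertical composition (split epis composing by composing sections) we have $s_{\b a \cdot \b p} = s_{\b p} \cdot s_{\b a}$, while applying $H$ to the square $(q,f) \colon \b p \to \b b$ yields the crucial section identity $Fq \cdot s_{\b p} = s_{\b b} \cdot Ff$. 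Combining these, $G[\b a \cdot \b p, g \cdot q] = Fg \cdot Fq \cdot s_{\b p} \cdot s_{\b a} = Fg \cdot s_{\b b} \cdot Ff \cdot s_{\b a} = G[\b b, g] \cdot G[\b a, f]$. This step---where the discrete pullback-fibration property is used essentially, both to produce $\b p$ and to relate its section to that of $\b b$---is the main obstacle; everything else is bookkeeping.

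Conversely, from an extension $G$ I recover a lifting by setting $s_{\b a} = G[\b a, 1_X]$, with underlying split epi $F\b a = G[\b 1, \b a]$. That $s_{\b a}$ is genuinely a section is exactly where \emph{right-connectedness} enters: the factorisation $[\b a, \b a] = [\b 1, \b a] \cdot [\b a, 1]$ in $\Wk$ identifies $F\b a \cdot s_{\b a}$ with $G[\b a, \b a]$, and the completion square~\eqref{eq:9} furnished by right-connectedness is precisely a $\dcat A$-span map $(\b a, \b a) \to (\b 1_A, 1_A)$, whence $[\b a, \b a] = 1_A$ and $F\b a \cdot s_{\b a} = 1$. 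The resulting family of split epis respects vertical composition and squares, so assembles into a double functor by local preorderedness. Finally the two assignments are mutually inverse: the identity $[\b a, f] = J f \cdot [\b a, 1]$ gives $G[\b a, f] = Ff \cdot s_{\b a}$, recovering $G$ from its associated $H$, while $s_{\b a} = G[\b a, 1]$ recovers $H$ from its $G$; $2$-naturality in $\D$ is routine. Appealing to~\eqref{eq:splitting} then completes the identification of $\Wk$ with $S \star \dcat A$.
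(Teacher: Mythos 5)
Your strategy coincides with the paper's own proof: reduce via Lemma~\ref{lem:1} and the argument of Theorem~\ref{thm:WeakMaps} to a natural bijection between liftings $H$ and extensions $G$; define $G[\b{a},f] = Ff \cdot s_{\b{a}}$ and check well-definedness on zigzags and preservation of composition via the cartesian lift $(q,f) \colon \b{p} \to \b{b}$; recover $s_{\b{a}} = G[\b{a},1]$ in the converse direction, with the section identity $Fa \cdot s_{\b{a}} = G[\b{a},a] = 1$ coming from right-connectedness via the span map $(\b{a},a) \to (\b{1},1)$; and finally invoke~\eqref{eq:splitting} to identify $\Wk$ with $S \star \dcat A$. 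All of these steps are correct and match the paper.

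However, there is a genuine gap at the sentence ``The resulting family of split epis respects vertical composition and squares, so assembles into a double functor by local preorderedness.'' Local preorderedness of $\dcat{SplEpi}(\D)$ only makes squares property-like; the property itself must still be verified, and for squares it is the most delicate point of the whole converse direction. Concretely: given an $\dcat A$-square $(r,s) \colon \b{a} \to \b{b}$, you must show $Fr \cdot s_{\b{a}} = s_{\b{b}} \cdot Fs$, which for an arbitrary extension $G$ (in particular for the universal case $G = 1_{\Wk}$, to which the paper reduces by naturality) amounts to the identity $[\b{1},r] \cdot [\b{a},1] = [\b{b},1] \cdot [\b{1},s]$ in $\Wk$. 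The left side is $[\b{a},r]$, but the right side computes via the pullback $B \times_D C$ of $s$ along $b$ to $[\b{p},q]$, and showing $[\b{a},r] = [\b{p},q]$ requires producing a span map between these spans: one forms the canonical comparison $k \colon A \to B \times_D C$ with $pk = a$ and $qk = r$, and then must invoke the fact that the cartesian square $(q,s) \colon \b{p} \to \b{b}$ \emph{detects} $\dcat A$-squares---the second half of the discrete pullback-fibration condition---to conclude that $(k,1) \colon \b{a} \to \b{p}$ is an $\dcat A$-square and hence that $k \colon (\b{a},r) \to (\b{p},q)$ is a map of $\dcat A$-spans. Your forward direction uses only the \emph{existence} of cartesian lifts, never their detection property, so your assessment that the forward composition step is ``the main obstacle; everything else is bookkeeping'' misplaces where pullback-stability earns its keep a second time: without this argument the converse direction is unproved. (The companion claim that the family respects vertical composition, i.e.\ that $[\b{a},1] \cdot [\b{b},1] = [\b{a} \cdot \b{b},1]$, is genuinely easy, as the paper notes, but should also be recorded.) Once this verification is supplied, the remainder of your proposal---mutual inverseness, $2$-naturality in $\D$, and the identification with $S \star \dcat A$---goes through exactly as in the paper.
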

\begin{proof}
  Arguing as in Theorem~\ref{thm:WeakMaps}, it suffices to exhibit a
  natural bijection between extensions $G$ and liftings $H$ as
  in~\eqref{eq:Ext}. Suppose first that we are given a lifting of $F$
  to a double functor $H$: such is specified by the assignment to each
  $\b{a} \colon A \to B$ of a split epi $(Fa,{\b{a}}^\ast) \colon FA
  \to FB$ in $\D$ subject to functoriality
  axioms. 

  Given this, we must define an extension $G \colon \Wk
  \to \D$ as on the left. Of course, we take $GX = FX$ on objects. On
  morphisms, given $[\b{a},f] \colon A \leftarrow X \rightarrow B$ in
  $\Wk$, we have an arrow $Ff \cdot {\b{a}}^\ast \colon
  FA \to FX \to FB$ in $\D$; furthermore, for any $\mathbb{A}$-span
  map $r \colon (\b{a},f) \to (\b{b},g)$, the $\dcat A$-square $(1,r)
  \colon \alg a \to \alg b$ is sent to a morphism $(1,Fr) \colon
  (Fa,{\b{a}}^\ast) \to (Fb,{\b{b}}^\ast)$ of split epis. Thus the
  left diagram in
\begin{equation}
\begin{gathered}
\vcenter{\hbox{\xy
(0,0)*+{FA}="00";(15,15)*+{FX}="11"; (30,0)*+{FB}="20"; (15,0)*+{FY}="10";
{\ar^{{\b{a}}^\ast} "00"; "11"}; 
{\ar_{{\b{b}}^\ast} "00"; "10"}; 
{\ar^{Ff} "11"; "20"}; 
{\ar_{Fg} "10"; "20"}; 
{\ar^{Fr} "11"; "10"}; 
\endxy}}
\hspace{1.5cm}
\vcenter{\hbox{\xy
(12,24)*+{F(X \times_{B} Y)}="f0"; 
(0,12)*+{FX}="a0"; (-12,0)*+{FA}="b0";(12,0)*+{FB}="c0";
{\ar^{{\b{a}}^\ast} "b0"; "a0"}; 
{\ar^<<<{Ff} "a0"; "c0"}; 
(24,12)*+{FY}="d0";(36,0)*+{FC}="e0";
{\ar^{{\b{b}}^\ast} "c0"; "d0"}; 
{\ar^{Fg} "d0"; "e0"}; 
{\ar^<<<<{{\b{p}}^\ast} "a0"; "f0"}; 
{\ar^<<<<<<{Fq} "f0"; "d0"}; 
\endxy}}
\end{gathered}
\end{equation}
commutes, so that taking $G[\b{a},f] = Ff \cdot {\alg a}^\ast$ gives
a well-defined assignation on morphisms. Given $f \colon A \to B \in
\C$ we have $G[\b{1},f]=Ff \cdot \alg 1^\ast=Ff$ so that $G$ is indeed an
extension of $F$, and the same argument shows that it preserves
identities. As for binary composition, let $[\b{a},f] \colon A \to B$
and $[\b{b},g] \colon B \to C$ in $\Wk$ with composite
$[\b{a}\cdot \b{p},g\cdot q]$ as in~\eqref{eq:3}. Now $G[\alg b, g]
\cdot G[\alg a, f]$ and $G([\b b, g] \cdot [\b a, f])$ are the lower
and upper composites on the right above, so it suffices to show that
the inner square commutes---but as $(q,f) \colon \b{p} \to \b{b}$ in $\dcat
A$, its image under $H$ is a map of split epis $(Fp,{\b{p}}^\ast) \to
(Fb,{\b{b}}^\ast)$, whence the square commutes as required.

The passage from $H \colon \dcat{A} \to \dcat{SplEpi}(\D)$ to $G
\colon \Wk \to \D$ is clearly natural in $\D$; it is
also injective, as given $\b{a} \colon A \to B$ we have ${\b{a}}^\ast
= G[\b a, 1] \colon FB \to FA$. To complete the proof, it thus
suffices to show that each extension $G \colon \Wk \to
\D$ is induced by some $H \colon \dcat A \to \dcat{SplEpi}(\D)$. By
naturality, it is enough to show that $1 \colon \Wk \to
\Wk$ is induced by some $H \colon \dcat A \to
\dcat{SplEpi}(\Wk)$ lifting $J$.

We define $H$ on vertical morphisms by $\alg a \mapsto (Ja, [\alg a,
1]) = ([\alg 1, a], [\alg a, 1])$. So long as this is well-defined,
the corresponding extension $\Wk \to \Wk$
will send $[\b{a},f] \colon A \to B$ to $Jf \cdot [\b{a},1] = [1, \alg
f] \cdot [\b a, 1] = [\alg a, f]$ and so be the identity as required.
It remains to show well-definedness of $H$. By $\dcat A$'s
right-connectedness, $a \colon (\b{a},a) \to (\b{1},1)$
is an $\dcat{A}$-span map, whence $[\alg a, 1] \cdot [\alg 1, a] =
[\alg a, a] = 1$ so that $([\alg 1, a], [\alg a, 1])$ is indeed a
split epi in $\Wk$. Vertical functoriality of $H$
follows from the easy fact that $[\alg a, 1] \cdot [\alg b, 1] = [\alg
a \cdot \alg b, 1]$, and so it remains to prove that $H$ is
well-defined on squares.

To this end, let $(r,s) \colon \b{a} \to \b{b}$ in $\dcat A$; we must show that
$([\b{1},r],[\b{1},s])$ is a map of split epis $([\b{1},a],[\b{a},1])
\to ([\b{1},b],[\b{b},1])$, so that $[\b{1},r]
\cdot [\b{a},1] = [\b{b},1] \cdot [\b{1},s] \colon B \to C$ in
$\Wk$. The left-hand side composes to $[\b{a},r]
\colon B \to C$ whilst the right-hand is the composite $[\b{1} \cdot
\b{p},1 \cdot q]=[\b{p},q] \colon B \to C$ in
$$
\xy
(12,36)*+{A}="g0";
(12,24)*+{B \times_{D} C}="f0"; 
(0,12)*+{B}="a0"; (-12,0)*+{B}="b0";(12,0)*+{D}="c0";
{\ar_{\b{1}} "a0"; "b0"}; 
{\ar^{s} "a0"; "c0"}; 
(24,12)*+{C}="d0";(36,0)*+{C\rlap{ .}}="e0";
{\ar_{\b{b}} "d0"; "c0"}; 
{\ar^{1} "d0"; "e0"}; 
{\ar_{\b{p}} "f0"; "a0"}; 
{\ar^{q} "f0"; "d0"}; 
{\ar@{.>}^{k} "g0"; "f0"}; 
{\ar@/_1.5pc/_{\b{a}} "g0"; "a0"}; 
{\ar@/^1.5pc/^{r} "g0"; "d0"}; 
\endxy$$
There is a unique $k \colon A \to B \times_{D} C$ as displayed
with $p \cdot k = a$ and $q \cdot k =r$; and since the cartesian $(q,s)
\colon \b{p} \to \b{b}$ detects squares and $(r,s) \colon \b{a} \to
\b{b}$ is an $\dcat A$-square, it follows that $(k,1) \colon \b{a} \to
\b{p}$ is an $\dcat A$-square. Thus $k \colon (\b{a},r) \to
(\b{p},q)$ is a morphism of $\dcat{A}$-spans, and so $[\b{1},r] \cdot
[\b{a},1]=[\alg a, r] = [\alg p, q] = [\b{b},1] \cdot [\b{1},s]$ as
required.
\end{proof}
Applying this theorem in the situation where we have an
\awfs $(\mathsf L, \mathsf R)$ on a category $\C$ with pullbacks, we deduce
that we may construct the category $\Wkl$ of left weak maps
as $\C[\DAlg{R}^\ast]$. If $\C$ also has an initial
object, then our original construction of $\Wkl$ as $\Kl{\mathsf Q}$
also applies---and we conclude that:
\begin{Cor}
  If $(L,R)$ is an \awfs on a category $\C$ with an initial object and
  pullbacks, then there is a unique isomorphism $\Kl{Q} \cong
  \C[\DAlg{R}^\ast]$ commuting with the maps from $\C$.
\end{Cor}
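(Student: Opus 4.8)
The plan is to recognise $\Kl{Q}$ and $\C[\DAlg{R}^\ast]$ as two representing objects for one and the same $2$-functor, $\Dbl(\DAlg{R}, \dcat{SplEpi}(\thg)) \colon \Cat \to \Cat$, and then to read off the isomorphism from the uniqueness of representing objects. First I would confirm that Theorem~\ref{thm:pullback} applies to the concrete double category $\DAlg{R}$ of $\mathsf R$-algebras: it is right-connected, the square~\eqref{eq:9} required for this being furnished by the unit of the monad $\mathsf R$, and it is pullback-stable by~\cite[Proposition~8]{Bourke2014AWFS1}. Since $\C$ has pullbacks, Theorem~\ref{thm:pullback} then supplies $2$-natural isomorphisms $\Cat(\C[\DAlg{R}^\ast], \D) \cong \Dbl(\DAlg{R}, \dcat{SplEpi}(\D))$, exhibiting $\C[\DAlg{R}^\ast]$ as a representation of this $2$-functor under which extensions of a functor $F \colon \C \to \D$ through the coprojection $J \colon \C \to \C[\DAlg{R}^\ast]$ correspond to liftings of $F$.

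Since $\C$ also has an initial object, Theorem~\ref{thm:WeakMaps} provides $2$-natural isomorphisms $\Cat(\Kl{Q}, \D) \cong \Dbl(\DAlg{R}, \dcat{SplEpi}(\D))$ exhibiting $\Kl{Q} = \Wkl$ as a representation of the \emph{very same} $2$-functor, this time with extensions through the cofree functor corresponding to liftings. Composing the two families of isomorphisms produces a $2$-natural isomorphism $\Cat(\Kl{Q}, \D) \cong \Cat(\C[\DAlg{R}^\ast], \D)$; by the $2$-categorical Yoneda lemma this is induced by a unique isomorphism of categories $\Kl{Q} \cong \C[\DAlg{R}^\ast]$.

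It remains only to check that this comparison isomorphism commutes with the coprojections from $\C$, and here the point is that both representations are compatible with the forgetful transformation $\Dbl(\DAlg{R}, \dcat{SplEpi}(\thg)) \to \Cat(\C, \thg)$ that sends a lifting to the functor $F$ it lifts. Tracing through~\eqref{eq:23}, the composite $\Cat(\Kl{Q}, \D) \cong \Dbl(\DAlg{R}, \dcat{SplEpi}(\D)) \to \Cat(\C, \D)$ is precomposition with the cofree functor, and on the $\C[\DAlg{R}^\ast]$ side it is precomposition with $J$. Evaluating the comparison isomorphism at the identity functor therefore shows that it carries the cofree functor to $J$; since it moreover transports the universal lifting of $\Kl{Q}$ to that of $\C[\DAlg{R}^\ast]$, it is the unique isomorphism commuting with the maps from $\C$. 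I do not expect any substantial obstacle here—the statement is a formal consequence of the two preceding theorems—so the only real care is in checking that these theorems represent \emph{literally} the same $2$-functor and that the forgetful transformation to $\Cat(\C, \thg)$ intertwines their two representations.
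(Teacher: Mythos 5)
Your proposal is correct and is essentially the paper's own argument: the corollary is deduced there precisely by observing that Theorem~\ref{thm:pullback} (applicable since $\DAlg{R}$ is right-connected and pullback-stable by~\cite[Proposition~8]{Bourke2014AWFS1}) and Theorem~\ref{thm:WeakMaps} exhibit $\C[\DAlg{R}^\ast]$ and $\Kl{Q}$ as representations of one and the same $2$-functor $\Dbl(\DAlg{R}, \dcat{SplEpi}(\thg))$, whence Yoneda yields a unique isomorphism compatible with the maps from $\C$. One cosmetic remark: right-connectedness of $\DAlg{R}$ is witnessed by the $\mathsf R$-algebra square $(f,1_B) \colon \alg f \to \alg 1_B$ of~\eqref{eq:9} (automatic for the algebra double category of any \awfs, per Theorem~\ref{thm:characterise}), rather than literally by the unit of the monad $\mathsf R$.
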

Explicitly, this isomorphism identifies a map $f \colon QA \to B$ in
the Kleisli category with the equivalence class of $(\fr !_{A},f)
\colon A \leftarrow QA \to B$.
\begin{Rk}
  \label{rk:3}
  As well as being related to~\cite{Bourke2014Two-dimensional}, the
  construction and proof of Theorem~\ref{thm:pullback} are also
  closely related to parts of~\cite{Weber2014Codescent}. In Definition
  5.1.1 of \emph{ibid.}, Weber defines a \emph{crossed internal
    category} in $\cat{CAT}$ to be an internal category $\mathbb A$
  whose codomain map is a split opfibration, and whose identity and
  composition morphisms are strict maps of split opfibrations. In
  Theorem 5.4.4, he gives a construction of the codescent object of a
  crossed internal category which, after dualising appropriately, is
  more-or-less the construction of $\C[\mathbb A^\ast]$ given above.

  The source of this connection is as follows. If $\C$ is a category
  with pullbacks, and $\mathbb A$ is a pullback-stable and
  right-connected double category over $\C$, then its codomain map is
  a \emph{fibration}, and its identity and composition maps are
  pseudomorphisms of fibrations. Ignoring the inessential facts of the
  fibration being non-split and the identity and composition maps
  being non-strict, it follows that the double category
  $\mathbb A^\co = (\A_1)^\op \rightrightarrows (\C^\atwo)^\op$ is a
  crossed internal category. Now, as we remarked in
  Section~\ref{sec:weak-maps-as} above, the universal property of
  $\C[\mathbb A^\ast]$ as the weighted colimit $S \star \dcat A$
  equally well makes it the codescent object of $\mathbb A^\op$; and
  as a general fact, if $\D$ is the codescent object of an internal
  category $\mathbb D$ in $\cat{CAT}$, then $\D^\op$ is the codescent
  object of $\mathbb D^{\co \op}$.
%
  Putting these facts together, we conclude  that $\C[\mathbb
  A^\ast]^\op$ is the codescent object of the crossed internal category
  $(\mathbb A^\op)^{\co\op} = \mathbb A^\co$, whose description in
  Weber's framework agrees with that given by our construction above.
\end{Rk}

\section{$\mathsf P$-split epis}
\label{sec:mathsf-c-split}
Consider once again the isomorphisms~\eqref{eq:12} defining the
category of left weak maps of an \awfs. If the category $\D$ therein
has binary coproducts, then the double category $\dcat{SplEpi}(\D)$ is
the double category of algebras for an \awfs $\mathsf{SE}(\D)$ on $\D$; whence
by Lemma~\ref{lem:1} and Theorem~\ref{thm:characterise}, we may
rewrite~\eqref{eq:12} as an isomorphism
\begin{equation*}
\Cat(\Wkl, \D) \cong \Lax(\,(\mathsf L, \mathsf R), \mathsf{SE}(\D)\,)\rlap{ .}
\end{equation*}
It is natural to try and see these isomorphisms as the action on homs
of an adjunction between categories and \awfs. However, there is an
obstruction to doing so: the Kleisli categories in the image of the
apparent left adjoint $\cat{Wk}_\ell(\thg)$ will typically not admit
binary coproducts, while the apparent right adjoint $\mathsf{SE}(\thg)$
can only be defined for those categories which do so.

The reason for this difficulty is that we should not be constructing
an adjunction between \awfs and categories, but rather one between
\awfs and \emph{comonads}. The left adjoint will send an \awfs
$(\mathsf L, \mathsf R)$ to its cofibrant replacement comonad, while
the right adjoint will send a comonad $\mathsf P$ to the following
\emph{$\mathsf P$-split epi} \awfs.

\subsection{$\mathsf P$-split epis}
\label{sec:mathsf-p-split}
Given a comonad $\mathsf P$ on $\C$, a \emph{$\mathsf P$-split epi} is
a map $g \colon A \to B$ equipped with a section $p \colon B
\rightsquigarrow A$ of $g$ in the Kleisli category $\Kl{P}$: thus,
a map $p \colon PB \to A$ of $\C$ with $gp = \epsilon_B \colon PB \to
B$. The $\mathsf P$-split epis are the vertical arrows of a concrete
double category over $\C$, most efficiently described as a pullback
\begin{equation}
\cd{ \pushoutcorner \mathsf P\text-\dcat{SplEpi}(\C) \ar[r] \ar[d]_V &
  \dcat{SplEpi}(\Kl{P}) \ar[d]^U \\
  \Sq{\C} \ar[r]_-{\Sq{G}} & \Sq{\Kl{P}}}\label{eq:28}
\end{equation}
along the cofree functor $G \colon \C \to \Kl{P}$. As in
Example~\ref{sec:projective} we see that $V$ will exhibit $\mathsf
P\text-\dcat{SplEpi}(\C)$ as the double category of $\mathsf
R$-algebras for an \awfs on $\C$ so long as $V_1 \colon \mathsf
P\text-\cat{SplEpi}(\C) \to \C^\atwo$ has a left adjoint. From our
explicit description of the $\mathsf P$-split epis, it is easy to see
that this will be so whenever $\C$ admits binary coproducts, with the
unit of the free $\mathsf P$-split epi on $f$ being
given as on the left in
\begin{equation}
  \cd{A \ar[d]_{f} \ar[r]^-{\iota_{A}} & A+PB \ar@<-3pt>[d]_{\spn{f,\epsilon_B}} \\
B \ar[r]_-{1} & B \ar@<-3pt>@{~>}[u]_{\iota_{PB}}} \qquad \
\cd{
A \ar[d]_{\iota_A} \ar[rr]^1 &  & A \ar[d]^{\iota_A} \\
A + PB \ar[r]_{1 + \Delta_B} & A + PPB \ar[r]_{1 + P\iota_{PB}} & A +
P(A + PB)\rlap{ .}
}\label{eq:52}
\end{equation}
The comonad $\mathsf L$ for this \awfs thus sends $f$ to $\iota_{A}
\colon A \to A + PB$ and has counit $(1,\spn{f,\epsilon_B}) \colon
\iota_{A} \to f$, while its comultiplication can be calculated
according to Remark~\ref{rk:2} as having $f$-component as on the right
above. In particular, the $\mathsf P$-split epi \awfs has cofibrant
replacement comonad $\mathsf P$.

Though we shall not need the general $\mathsf L$-coalgebras here, a
straightforward calculation shows that, whenever $A \in \C$ and $b
\colon B \to PB$ is a $\mathsf P$-coalgebra, we obtain $\mathsf
L$-coalgebra structure on $\iota_A \colon A \to A+B$; and that when
$\C$ is a lextensive category and $P$ preserves pullbacks
along coproduct injections, every $\mathsf L$-coalgebra arises in this way.

\subsection{The \awfs--comonad adjunction}
\label{sec:awfs-comon-adjunct}
We are now ready to construct the adjunction between \awfs and
comonads alluded to above. We write $\cat{AWFS}^+_\mathrm{lax}$ for
the full sub-$2$-category of $\Lax$ on those \awfs whose underlying
categories admits finite coproducts, and $\cat{CMD}^+_\mathrm{lax}$
for the corresponding $2$-category of comonads.

\begin{Thm}
  \label{thm:9}
  There is a $2$-adjunction
\begin{equation}
\cd[@C+4.5em]{ \cat{CMD}^+_\mathrm{lax}
  \ar@<-4.5pt>[r]_-{(\thg)\text-\textrm{split epi}}
  \ar@{<-}@<4.5pt>[r]^{\substack{\text{cofibrant replacement}}}
    \ar@{}[r]|{\bot} & \cat{AWFS}^+_\mathrm{lax}} 
  \label{eq:10}
\end{equation}
with invertible counit.
\end{Thm}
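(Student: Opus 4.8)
The plan is to exhibit, $2$-naturally in the comonad $\mathsf P$ on $\D$ and in the \awfs $(\mathsf L, \mathsf R)$ on $\C$, an isomorphism of hom-categories
$$\cat{AWFS}^+_\mathrm{lax}\big((\mathsf L, \mathsf R),\, \mathsf P\text-\textrm{split epi}\big) \;\cong\; \cat{CMD}^+_\mathrm{lax}(\mathsf Q, \mathsf P)\rlap{ ,}$$
where $\mathsf Q$ is the cofibrant replacement comonad of $(\mathsf L, \mathsf R)$; together with $2$-functoriality of the $\mathsf P\text-$split epi assignment, this $2$-representation is exactly the data of the asserted $2$-adjunction. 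I would obtain the isomorphism by composing four correspondences. First, since the semantics $2$-functor $\DAlg{(\thg)}$ is $2$-fully faithful (Theorem~\ref{thm:characterise}), a lax morphism from $(\mathsf L, \mathsf R)$ to the $\mathsf P\text-$split epi \awfs is the same as a concrete double functor $\DAlg{R} \to \mathsf P\text-\dcat{SplEpi}(\D)$, which by Lemma~\ref{lem:1} is an object of $\Dbl(\DAlg R, \mathsf P\text-\dcat{SplEpi}(\D))$. Second, the pullback~\eqref{eq:28} defining $\mathsf P\text-\dcat{SplEpi}(\D)$ identifies these with double functors $\DAlg R \to \dcat{SplEpi}(\Kl P)$ whose composite into $\Sq{\Kl P}$ factors through $\Sq G$ along the cofree functor $G \colon \D \to \Kl P$. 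Third, taking $\Kl P$ for the codomain category in Theorem~\ref{thm:WeakMaps} (legitimate since $\C$ has finite coproducts, so $\Wkl = \Kl Q$) rewrites these as functors $\Phi \colon \Kl Q \to \Kl P$ for which $\Phi$ precomposed with the cofree functor $\C \to \Kl Q$ factors through $G$.

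The fourth and crucial step is to recognise this last condition as precisely the data of a lax comonad morphism $\mathsf Q \to \mathsf P$; this is an instance of the formal theory of (co)monads~\cite{Street1972The-formal}. A lax comonad morphism $(F, \psi)$ with $\psi \colon \mathsf P F \Rightarrow F \mathsf Q$ induces the Kleisli functor sending $f \colon \mathsf Q A \to B$ to $Ff \cdot \psi_A \colon \mathsf P FA \to FB$, and its counit axiom is exactly the statement that this functor, restricted along cofree, factors through $G$ via $F$; conversely $\psi_A$ is recovered as the image under $\Phi$ of the Kleisli arrow $1_{\mathsf Q A}$, with the remaining comonad-morphism axioms translating into functoriality. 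The same bookkeeping at the level of $2$-cells matches transformations of lax comonad morphisms with the natural transformations produced by Theorems~\ref{thm:characterise} and~\ref{thm:WeakMaps}, so the composite is an isomorphism of categories. Naturality in $\mathsf P$ is inherited from Theorem~\ref{thm:WeakMaps} and the pullback~\eqref{eq:28}, while naturality in $(\mathsf L, \mathsf R)$ comes from the $2$-functoriality of $\DAlg{(\thg)}$; together these upgrade the family of isomorphisms to a $2$-adjunction with left adjoint computed as cofibrant replacement.

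For the final clause, I would invoke the computation in Section~\ref{sec:mathsf-p-split}: the cofibrant replacement comonad of the $\mathsf P\text-$split epi \awfs is $\mathsf P$ itself. Under the hom-isomorphism the counit component at $\mathsf P$ corresponds to the identity morphism of the $\mathsf P\text-$split epi \awfs, and chasing this through the four correspondences identifies it with the canonical isomorphism $\mathsf Q_{\mathsf P\text-\mathrm{se}} \cong \mathsf P$. Hence the counit is invertible---equivalently, the right adjoint $(\thg)\text-$split epi is $2$-fully faithful, as claimed in the introduction.

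The step I expect to be the main obstacle is keeping variances and $2$-cell directions aligned across the four correspondences: in particular, confirming that the ``lax'' convention for comonad morphisms (with $\psi \colon \mathsf P F \Rightarrow F \mathsf Q$) is the one simultaneously compatible with Kleisli functoriality and with the lax \awfs morphisms defining $\cat{AWFS}^+_\mathrm{lax}$, and verifying that the resulting isomorphism is genuinely $2$-natural in \emph{both} arguments so that one obtains a $2$-adjunction rather than merely an objectwise representation.
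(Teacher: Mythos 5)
Your proposal is correct and takes essentially the same route as the paper: the paper's proof consists of exactly your chain of four correspondences (Street's Kleisli-category description of lax comonad morphisms, Theorem~\ref{thm:WeakMaps}, the pullback~\eqref{eq:28}, and the $2$-full fidelity of the semantics $2$-functor from Theorem~\ref{thm:characterise}), merely traversed from the comonad side rather than the \awfs side, with the $2$-cell bookkeeping handled by the trick of replacing $(\D, \mathsf P)$ by $(\D^\atwo, \mathsf P^\atwo)$ and counit invertibility by the same observation from Section~\ref{sec:mathsf-p-split}. Your worry about variances is resolved as you hoped: the direction $\psi \colon \mathsf P F \Rightarrow F \mathsf Q$ is the one that extends to co-Kleisli categories (a comonad opfunctor in Street's terminology), which is precisely the convention the paper's $\cat{CMD}^+_\mathrm{lax}$ inherits from the lax \awfs morphisms.
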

\begin{proof}
  The assignation sending a comonad $(\C, \mathsf P)$ to the functor
  {$G^\mathsf P \colon \C \to \Kl{P}$} is the action on objects of a
  $2$-functor $\cat{CMD}_\mathrm{lax} \to \Cat^\atwo$. It follows that
  the assignation sending $(\C, \mathsf P)$ to the left-hand arrow
  of~\eqref{eq:28} is the action on objects of a $2$-functor
  $\cat{CMD}_\mathrm{lax} \to \Dbl^\atwo$. By the preceding argument
  and Theorem~\ref{thm:characterise}, the restriction of this
  $2$-functor to $\cat{CMD}^+_\mathrm{lax}$ factors through
  $\cat{AWFS}^+_\mathrm{lax}$; this defines the right adjoint
  $2$-functor.

  We now show that, for any $(\C, \mathsf L, \mathsf R) \in
  \cat{AWFS}^+_{\mathrm{lax}}$, the cofibrant replacement comonad
  $(\C, \cof)$ provides the value at $(\C, \mathsf L, \mathsf R)$ of a left adjoint to this
  $2$-functor. Indeed, to give a morphism $(\C, \cof) \to (\D,
  \mathsf P)$ in $\cat{CMD}^+_\mathrm{lax}$ is to give a
  square as on the left in
  \[
  \cd[@C-0.5em]{
    \C \ar[d]_{G^\cof} \ar[r]^F & \D \ar[d]^{G^\mathsf P} & 
    \DAlg{R} \ar[d] \ar[r]^-{} & \dcat{SplEpi}(\Kl{P}) \ar[d] & 
    \DAlg{R} \ar[d] \ar[r]^-{} & \mathsf P\text-\dcat{SplEpi}(\D) \ar[d] \\
    \Kl{\cof} \ar[r]_{\bar F} & \Kl{P} & 
    \Sq{\C} \ar[r]_-{\Sq{G^\mathsf P F}} & \Sq{\Kl{P}} & 
    \Sq{\C} \ar[r]_-{\Sq{F}} & \Sq{\D}\rlap{ .}
    }
\]
By Theorem~\ref{thm:WeakMaps}, this is equally to give a square as in
the centre; and since~\eqref{eq:28} is a pullback, this is equivalent
to giving a square as on the right. But this is equally to give a lax
\awfs morphism $(\C, \mathsf L, \mathsf R) \to (\D, \mathsf L_\mathsf
P, \mathsf R_\mathsf P)$ into the $\mathsf P$-split epi \awfs on $\C$,
as required. The naturality of these bijections in $(\D, \mathsf P)$
is straightforward; to obtain the bijection on $2$-cells, replace
$(\D, \mathsf P)$ by $(\D^\atwo, \mathsf P^\atwo)$ above. Finally, the
invertibility of the counit is the fact that the cofibrant replacement
comonad of the $\mathsf P$-split epi \awfs is $\mathsf P$.
\end{proof}

Clearly, the $2$-adjunction of the previous proposition is fibred over
$\Cat$; on restricting to the fibre over a fixed category $\C$ with
finite coproducts, we obtain the following corollary, telling us that
the category of comonads on $\C$ with finite coproducts embeds
coreflectively into the category of \awfs on $\C$. Note the reversal
of the adjoints, caused by the fact that $\Awfs{\C}$ and
$\cat{CMD}(\C)$ embed \emph{contravariantly} into
$\cat{AWFS}^+_{\mathrm{lax}}$ and $\cat{CMD}^+_{\mathrm{lax}}$.
\begin{Cor}
  \label{cor:3}
For any category $\C$ with finite coproducts, there is an adjunction
  \[
  \cd[@C+4.8em]{ \Awfs{\C} \ar@<-4.5pt>[r]_-{\text{cofibrant
        replacement}} \ar@{<-}@<4.5pt>[r]^{(\thg)\text-\textrm{split
        epi}} \ar@{}[r]|{\bot} & \cat{CMD}(\C)}
\]
with invertible unit.
\end{Cor}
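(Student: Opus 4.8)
The plan is to derive this purely formally from Theorem~\ref{thm:9}, by restricting the $2$-adjunction there to the fibre over $\C$ and then dualising. First I would observe that both $2$-functors of Theorem~\ref{thm:9}---cofibrant replacement and $(\thg)$-split epi---preserve underlying base categories: cofibrant replacement sends an \awfs on a category to a comonad on the \emph{same} category, and $(\thg)$-split epi does the reverse. Thus both commute with the forgetful $2$-functors to $\Cat$, and one checks that the unit and counit constructed in the proof of Theorem~\ref{thm:9} are \emph{vertical} over $\Cat$, in that their components project to identities. This is precisely the assertion that the $2$-adjunction is fibred over $\Cat$, and it guarantees that restriction to the fibre over any fixed $\C$ yields an honest $2$-adjunction between those fibres.

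Next I would identify the fibres. As recalled in Section~\ref{sec:algebr-weak-fact}, restricting $\Lax \to \Cat$ to the fibre over $\C$ yields $\Awfs{\C}^\op$, and the entirely analogous phenomenon for lax comonad morphisms identifies the fibre of $\cat{CMD}^+_\mathrm{lax}$ over $\C$ with $\cat{CMD}(\C)^\op$. The restricted adjunction thus has cofibrant replacement, as a functor $\Awfs{\C}^\op \to \cat{CMD}(\C)^\op$, left adjoint to the $(\thg)$-split epi functor $\cat{CMD}(\C)^\op \to \Awfs{\C}^\op$, with invertible counit---the counit being invertible because the cofibrant replacement comonad of the $\mathsf P$-split epi \awfs is again $\mathsf P$.

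Finally I would pass to opposite categories. Applying $(\thg)^\op$ reverses the direction of the adjunction, so that over $\C$ the $(\thg)$-split epi functor $\cat{CMD}(\C) \to \Awfs{\C}$ becomes the \emph{left} adjoint and cofibrant replacement $\Awfs{\C} \to \cat{CMD}(\C)$ the right adjoint, exactly as displayed in the statement. Dualising also interchanges unit with counit, so the invertible counit of the fibred adjunction becomes the invertible unit asserted here; equivalently, the full faithfulness of the right adjoint in Theorem~\ref{thm:9} becomes full faithfulness of the now-left adjoint, exhibiting $\cat{CMD}(\C)$ as a coreflective subcategory of $\Awfs{\C}$.

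The only step requiring genuine care is the first: verifying that the $2$-adjunction of Theorem~\ref{thm:9} really is fibred over $\Cat$, i.e.\ that its unit and counit are componentwise identities on base categories, so that restriction to a single fibre produces an adjunction in the ordinary sense. Everything downstream---identifying the two fibres and tracking the op-duality that simultaneously swaps the adjoints and swaps unit with counit---is then routine bookkeeping.
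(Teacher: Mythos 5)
Your proposal is correct and is exactly the paper's argument: the authors likewise note that the $2$-adjunction of Theorem~\ref{thm:9} is fibred over $\Cat$, restrict to the fibre over $\C$, and observe that since $\Awfs{\C}$ and $\cat{CMD}(\C)$ embed \emph{contravariantly} into $\cat{AWFS}^+_{\mathrm{lax}}$ and $\cat{CMD}^+_{\mathrm{lax}}$, the adjoints reverse and the invertible counit becomes an invertible unit. Your write-up merely makes explicit the verifications (verticality of unit and counit, identification of the fibres as opposites) that the paper leaves as ``clearly''.
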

This restricted form of the adjunction---in the dual monad theoretic form of Section~\ref{sec:mathsf-t-split}
 below---was first constructed in~\cite[Theorem~34]{Hebert2011Weak}.

\subsection{A universal property of the split epi AWFS} 
\label{sec:univ-prop-split}
The following result---a corollary of Theorem~\ref{thm:9}
above---can be seen as providing a justification as to \emph{why}
split epimorphisms and cofibrant objects are so closely connected. In
its statement, we write $\cat{AWFS}_{\mathrm{cocts}}$ for the
$2$-category obtained by restricting $\Oplax$ to \awfs on cocomplete,
locally small categories and to cocontinuous oplax morphisms between
them.

\begin{Cor}
  \label{cor:2}
  The \awfs for split epis on $\cat{Set}$ is the free cocomplete \awfs
  on a cofibrant object; by which we mean that it is a
  birepresentation for the $2$-functor
  \[
  \cat{Cof}(\thg) \colon \cat{AWFS}_{\mathrm{cocts}} \to \Cat\rlap{ ,}
  \]
  sending an \awfs $(\mathsf L, \mathsf R)$ to its category
  $\Coalg{Q}$ of algebraically cofibrant objects.
\end{Cor}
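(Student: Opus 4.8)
The plan is to assemble the asserted birepresentation as a composite of three $2$-natural equivalences, passing first from the oplax to the lax world by doctrinal adjunction, then from \awfs to comonads by Theorem~\ref{thm:9}, and finally to coalgebras by a Yoneda argument. The one external input I would use at the outset is that $\cat{Set}$ is the free cocompletion of the terminal category: a cocontinuous functor $F \colon \cat{Set} \to \C$ is, up to isomorphism, the copower functor $X \mapsto X \cdot F1$, and hence is determined by the single object $F1 \in \C$ and carries the explicit right adjoint $\C(F1, \thg) \colon \C \to \cat{Set}$ (this, and no adjoint functor theorem, is all that the local smallness of $\C$ is needed for). Fixing $(\mathsf L, \mathsf R) \in \cat{AWFS}_{\mathrm{cocts}}$ on a cocomplete locally small $\C$, I write $\cof$ for its cofibrant replacement comonad, and note that $\mathsf{SE}(\cat{Set})$ is the $\mathsf P$-split epi \awfs for the identity comonad $\mathsf P = \mathrm{Id}_{\cat{Set}}$.

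First I would analyse $\cat{AWFS}_{\mathrm{cocts}}(\mathsf{SE}(\cat{Set}), (\mathsf L, \mathsf R))$. As every cocontinuous $F \colon \cat{Set} \to \C$ carries a canonical right adjoint, this hom-category is equivalent to the corresponding one of $\Ladj$; and the doctrinal-adjunction isomorphism $\Radj^{\co\op} \cong \Ladj$ then identifies it with $\Radj((\mathsf L, \mathsf R), \mathsf{SE}(\cat{Set}))^{\op}$---that is, with the opposite of the category of lax \awfs morphisms $(\mathsf L, \mathsf R) \to \mathsf{SE}(\cat{Set})$ whose underlying functor $G \colon \C \to \cat{Set}$ admits a left adjoint. (The $\op$ records the $2$-cell reversal built into $\co$; I will return to it.) Next, since the $2$-adjunction of Theorem~\ref{thm:9} is fibred over $\Cat$ and so preserves underlying functors, its hom-isomorphism specialises, at $\mathsf P = (\cat{Set}, \mathrm{Id})$, to an identification of these lax morphisms with the comonad morphisms $(\C, \cof) \to (\cat{Set}, \mathrm{Id})$ whose underlying $G$ has a left adjoint.

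It then remains to compute this last category. Exactly as in Theorem~\ref{thm:WeakMaps}, a comonad morphism $(\C, \cof) \to (\cat{Set}, \mathrm{Id})$ is, after~\cite{Street1972The-formal}, a functor $G \colon \C \to \cat{Set}$ together with a coalgebra structure $\alpha \colon G \Rightarrow G\cof$ for the comonad $(\thg) \cdot \cof$ on $[\C, \cat{Set}]$. Now ``$G$ has a left adjoint'' is equivalent to ``$G$ is representable'', in which case $G \cong \C(C, \thg)$ for $C$ its representing object; by Yoneda, $\alpha$ corresponds to a single map $\sigma \colon C \to \cof C$, and the counit and comultiplication laws for $\alpha$ become precisely the identities $\epsilon_C \cdot \sigma = 1_C$ and $\cof\sigma \cdot \sigma = \Delta_C \cdot \sigma$ exhibiting $(C, \sigma)$ as a $\cof$-coalgebra. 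Since natural transformations between representables $\C(C, \thg) \Rightarrow \C(C', \thg)$ correspond contravariantly to maps $C' \to C$, this Yoneda reduction identifies the category of such coalgebra structures on representables with $\Coalg{Q}^{\op}$; the resulting $\op$ cancels the one produced by doctrinal adjunction, so that the composite equivalence lands on $\Coalg{Q} = \cat{Cof}(\mathsf L, \mathsf R)$ the right way round. Checking $2$-naturality in $(\mathsf L, \mathsf R)$ at each stage then exhibits $\mathsf{SE}(\cat{Set})$ as a birepresentation of $\cat{Cof}(\thg)$; concretely, the comparison sends a cocontinuous oplax morphism $(F, \alpha)$ to its value $F1$ on the generic cofibrant object $1 \in \cat{Set}$, with the algebraically cofibrant structure induced by $\alpha$.

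The step I expect to be the main obstacle is the variance-and-$2$-cell bookkeeping of the middle paragraph: one must verify that the restriction to cocontinuous $1$-cells on the oplax side corresponds, through both the doctrinal-adjunction isomorphism and the fibred adjunction of Theorem~\ref{thm:9}, exactly to the representable underlying functors on the comonad side, and that the two order-reversals---the $\co$ of doctrinal adjunction and the contravariance of Yoneda on representables---genuinely cancel, so that one obtains an equivalence of hom-categories onto $\Coalg{Q}$ itself rather than onto $\Coalg{Q}^{\op}$. By comparison, the Yoneda translation of comonad-opfunctor axioms into coalgebra axioms, though it is the conceptual heart of the corollary, is a routine verification once these reductions are in place.
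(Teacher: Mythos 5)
Your proposal is correct, and while it shares the paper's backbone---doctrinal adjunction on the \awfs side together with the hom-isomorphism of Theorem~\ref{thm:9}---it diverges genuinely at the decisive final step. The paper restricts the whole $2$-adjunction of Theorem~\ref{thm:9} to $1$-cells with chosen left adjoints, applies the doctrinal isomorphisms $(\thg)^{\co\op}$ to \emph{both} sides to obtain a reversed adjunction $\cat{AWFS}^\mathrm{coc}_\mathrm{ladj} \leftrightarrows \cat{CMD}^\mathrm{coc}_\mathrm{ladj}$, and then computes the comonad-side hom $\cat{CMD}_{\mathrm{cocts}}(\,(\cat{Set},1),(\C,\cof)\,)$ by invoking Street's Theorems~7 and~12 (the Eilenberg--Moore universal property in the $2$-category of comonads), the cocompleteness of $\Coalg{Q}$ (via creation of colimits by the forgetful functor), and finally the free-cocompletion property of $\cat{Set}$ applied to $\Coalg{Q}$ through evaluation at $1$. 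You instead never flip the comonad side: you stay with morphisms $(\C,\cof) \to (\cat{Set},\mathrm{Id})$, observe that a functor $G \colon \C \to \cat{Set}$ with a left adjoint is exactly a representable (which is where cocompleteness and local smallness of $\C$ enter, in place of the paper's use of cocompleteness of $\Coalg{Q}$), and then unwind the Kleisli-extension/coalgebra structure $\alpha \colon G \Rightarrow G\cof$ by Yoneda into a $\cof$-coalgebra structure on the representing object---your verification that the counit and comultiplication laws become $\epsilon_C \cdot \sigma = 1$ and $\cof\sigma \cdot \sigma = \Delta_C \cdot \sigma$, and that $2$-cells become coalgebra maps contravariantly, is exactly right, and the two order-reversals do cancel as you claim (a $2$-cell of oplax morphisms passes under doctrinal adjunction to its reversed mate between the right adjoints, which contravariant Yoneda turns back into a covariant coalgebra map $F1 \to F'1$). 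What each approach buys: yours is more elementary and self-contained, avoiding Street's Eilenberg--Moore theorems entirely and making the identification of the generic cofibrant object $1 \in \cat{Set}$ completely explicit; the paper's is more modular, packaging the ladj-adjunction as a reusable statement and keeping the free-cocompletion step cleanly separated. One cosmetic caveat: the identifications in your chain that involve choices (of right adjoints, of representing objects) are equivalences that are \emph{pseudonatural} rather than $2$-natural in $(\mathsf L,\mathsf R)$---which is precisely what a birepresentation requires, and is the distinction the paper's own proof is careful to track---so your phrase ``$2$-natural equivalences'' should be weakened accordingly.
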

\begin{proof}
  We may restrict~\eqref{eq:10} to locally small,
  cocomplete categories and to functors with chosen left adjoint,
  yielding a $2$-adjunction
  $\cat{CMD}^\mathrm{coc}_\mathrm{radj} \leftrightarrows
  \cat{AWFS}^\mathrm{coc}_\mathrm{radj}$.
  The doctrinal adjunction of Section~\ref{sec:algebr-weak-fact},
  restricted to locally small, cocomplete categories, yields
  $(\cat{AWFS}^\mathrm{coc}_{\mathrm{radj}})^{\co\op} \cong
  \cat{AWFS}^\mathrm{coc}_{\mathrm{ladj}}$;
  similarly
  $(\cat{CMD}^\mathrm{coc}_{\mathrm{radj}})^{\co\op} \cong
  \cat{CMD}^\mathrm{coc}_{\mathrm{ladj}}$ and so 
  we obtain a $2$-adjunction
  $\cat{AWFS}^\mathrm{coc}_\mathrm{ladj} \leftrightarrows
  \cat{CMD}^\mathrm{coc}_\mathrm{ladj}$ with the left and right
  $2$-adjoints now \emph{reversed}. In particular, we have
  isomorphisms of categories
\[\cat{AWFS}^{\mathrm{coc}}_{\mathrm{ladj}}(\,\mathsf{SE}(\cat{Set}), (\C, \mathsf L, \mathsf R)) \cong \cat{CMD}^{\mathrm{coc}}_{\mathrm{ladj}}(\,(\cat{Set}, 1), (\C, \mathsf Q)\,)\]
$2$-natural in $(\C, \mathsf L, \mathsf R)$. Since a functor
$\cat{Set} \to \C$ into a locally small category admits a right
adjoint just when it preserves colimits, the above isomorphisms give
rise to equivalences
\[\cat{AWFS}_{\mathrm{cocts}}(\,\mathsf{SE}(\cat{Set}), (\C, \mathsf L, \mathsf R)) \simeq \cat{CMD}_{\mathrm{cocts}}(\,(\cat{Set}, 1), (\C, \mathsf Q)\,)\]
pseudonatural in $(\C, \mathsf L, \mathsf R)$. Now by~\cite[Theorems~7
and 12]{Street1972The-formal}, the category on the right is
$2$-naturally isomorphic to $\cat{CAT}_\mathrm{cocts}(\cat{Set},
\Coalg{Q})$---noting that $\Coalg{Q}$ is cocomplete, since $\C$ is so
and the forgetful functor creates colimits. Now as $\cat{Set}$ is the
free cocomplete category on the object $1$, the functor
$\cat{CAT}_\mathrm{cocts}(\cat{Set}, \Coalg{Q}) \to \Coalg{Q}$ given
by evaluation at $1$ is an equivalence; combining with the previous
equivalences yields the required pseudonatural equivalence
$\cat{AWFS}_\mathrm{cocts}(\,\mathsf{SE}(\cat{Set}), (\C, \mathsf L,
\mathsf R)\,) \simeq \Coalg{Q}$.
\end{proof}

\subsection{$\mathsf T$-split monos and sketches}
\label{sec:mathsf-t-split}
By dualising the preceding results, we obtain the \awfs for
\emph{$\mathsf T$-split monos} associated to any monad $\mathsf T$ on
a category $\C$ with binary products. Its coalgebras, the $\mathsf
T$-split monos, are maps $f \colon A \to B$ equipped with a Kleisli
retraction $B \rightsquigarrow A$; and if $\C$ also has a terminal
object, then its algebraically fibrant objects are the $\mathsf
T$-algebras. Dualising Theorem~\ref{thm:9}, we obtain an
adjunction as on the left in:
\begin{equation*}
  \cd[@C+3.3em]{ \cat{MND}^\times_\mathrm{oplax} \ar@<-4.5pt>[r]_-{(\thg)\text{\emph{-split mono}}} \ar@{<-}@<4.5pt>[r]^{(\C, \mathsf L, \mathsf R) \mapsto
      (\C, \fib)}
    \ar@{}[r]|{\bot} & \cat{AWFS}^\times_\mathrm{oplax}} 
  \qquad \quad
  \cd[@C+2em]{ \Awfs{\C} \ar@<-4.5pt>[r]_-{(\thg)\text{\emph{-split mono}}} \ar@{<-}@<4.5pt>[r]^{(\mathsf L, \mathsf R) \mapsto
      \fib}
    \ar@{}[r]|{\bot} & \cat{Mnd}(\C)}
\end{equation*}
with invertible counit; while restricting to a fixed category $\C$
with finite products, we obtain an adjunction as on the right, also
with invertible counit. 

We spell out this dual case primarily in order to highlight a connection
between the \awfs for $\mathsf T$-split monos and the \emph{sketches}
of~\cite{Kinoshita1999Sketches}. Given a finitary monad $\mathsf T$ on
a locally presentable category $\C$, \cite{Kinoshita1999Sketches}
defines a \emph{$\mathsf T$-sketch} to be given by:
\begin{enumerate}[(i)]
\item A small family of $4$-tuples $\D = (c_i, d_i, j_i \colon c_i \to d_i,
  k_i \colon d_i \to Tc_i)$ with each $c_i$ and $d_i$ finitely
  presentable and with $k_i j_i = \eta_{c_i}$ for each $i$;
\item An object $X$ of $\C$ and a $\D$-indexed family of maps $\phi_i
  \colon d_i \to X$;
\end{enumerate}
and a (strict) \emph{model} of this sketch in a $\mathsf T$-algebra
$(A,a)$ to be a map $f \colon X \to A$ rendering commutative each
square on the left in
\begin{equation}\label{eq:14}
\cd{
d_i \ar[r]^{k_i} \ar[d]_{\phi_i} & Tc_{i} \ar[d]^{a \cdot T(f\phi_i j_i)}
& & & c_i \ar[r]^{\psi_i} \ar[d]_{\alg j_i} & X\rlap{ .} \\
X \ar[r]_f & A & & & d_i \ar[ur]_{\phi_i}}
\end{equation}

Now, it is immediate that the family $\D$ of $4$-tuples in the
definition of sketch is nothing other than a family of $\mathsf
T$-split monos $(j_i, k_i) \colon c_i \to d_i$. So we can see a
$\mathsf T$-sketch as being given by an object $X$ and a family $\D$
of commuting triangles as to the right above wherein $j_i$ bears
$\mathsf T$-split mono structure.

What about models for sketches? Note that a special case of the
lifting property of any \awfs is that each map $c \to A$ into an
algebraically fibrant object admits a canonical extension along any
$\mathsf L$-map $c \to d$. Since $\mathsf T$ is the fibrant
replacement monad of the $\mathsf T$-split mono \awfs, this means in
particular that for any diagram as on the left in
\[
\cd{
c \ar[d]_{\alg j} \ar[r]^-h & \alg A\\
d \ar@{.>}[ur]|-{\bar h}
} \qquad \qquad \qquad
\cd{
c_i \ar[d]_{\alg j_i} \ar[r]^-{\psi_i} & X \ar[r]^f & \alg A\\
d_i \ar[ur]_-{\phi_i}}
\]
wherein $\alg j = (j,k)$ is a $\mathsf T$-split mono and $\alg A =
(A,a)$ is a $\mathsf T$-algebra, there is a canonical filler $\bar h$;
explicitly, we may calculate that $\bar h = a.Th.k$. Comparing with
the definition of model given above, we see that a model for a sketch
$(X, \D)$ in a $\mathsf T$-algebra $\alg A = (A,a)$ is a map $f \colon X \to A$
such that, for each triangle in $\D$ the composite triangle right
above is a canonical lifting triangle.

This suggests the following general definition. Given an \awfs
$(\mathsf L, \mathsf R)$ on a category $\C$ with terminal object, an
\emph{$(\mathsf L, \mathsf R)$-sketch} is given by an object $X \in
\C$ together with a family $\D$ of triangles as to the right
of~\eqref{eq:14} wherein $\alg j_i$ is equipped with $\mathsf L$-map
structure. A \emph{model} for a sketch $(X, \D)$ in an algebraically
fibrant object $\alg A$ is a morphism $f \colon X \to A$, composition
with which sends chosen triangles in $\D$ to canonical $(\mathsf L,
\mathsf R)$-lifting triangles. The sketches
of~\cite{Kinoshita1999Sketches} are then the specialisation of this
definition to the $\mathsf T$-split mono \awfs. A key result
in~\cite{Kinoshita1999Sketches} is one assuring the existence of
initial models for sketches, and it not hard to see that these
arguments may be carried out for sketches relative to any accessible
\awfs on a locally presentable category.

\section{Two dimensional monad theory}
\label{sec:two-dimens-monad}
We conclude this paper by examining two applications of the theory of
weak maps. The first is to the theory of \emph{$2$-monads}. If $\mathsf T$ is
a $2$-monad on a $2$-category $\C$, then in addition to the usual
Eilenberg--Moore $2$-category $\TAlgs$, one also has the
$2$-categories $\TAlgl$ and $\TAlgp$, whose objects are again the
$\mathsf T$-algebras, but whose maps are the \emph{lax} or
\emph{pseudo} algebra morphisms
$(f, \phi) \colon (A,a) \to (B,b)$. A lax morphism involves a $1$-cell
$f \colon A \to B$ and a $2$-cell
$\phi \colon b \cdot Tf \Rightarrow f \cdot a \colon TA \to B$
satisfying two coherence axioms~\cite[\S3.5]{Kelly1974Review}; a
pseudomorphism is the same but with $\phi$ invertible. Our objective
in this section, as presaged in Examples~\ref{ex:1} above, will be to
exhibit $\TAlgl$ and $\TAlgp$ as the categories of left weak maps for
suitable \awfs on $\TAlgs$. Note we are being slightly loose here,
since the theory of weak maps operates at the level of mere categories
while $\TAlgs$, $\TAlgp$ and $\TAlgl$ are in fact $2$-categories. To
capture the two-dimensional structure would require the theory of
enrichment over the \emph{monoidal \awfs} of~\cite{Riehl2013Monoidal};
as these are beyond our present scope, we will consider $\TAlgs$,
$\TAlgp$ and $\TAlgl$ as mere $1$-categories and proceed accordingly.

Before continuing, let us note that the result we are aiming for
allows us to reconstruct the main Theorem~3.13
of~\cite{Blackwell1989Two-dimensional}; this states that for an
accessible $2$-monad $\mathsf T$ on a complete and cocomplete
$2$-category $\C$, the inclusion functors
\begin{equation}
\cd{
I \colon \TAlgs \to \TAlgl \qquad \text{and} \qquad J \colon \TAlgs \to \TAlgp
}\label{eq:29}
\end{equation}
have left adjoints.\footnote{In~\cite{Blackwell1989Two-dimensional},
  $\TAlgl$, $\TAlgp$ and $\TAlgs$ are considered as $2$-categories,
  and $I$ and $J$ as $2$-functors, which are shown to have left
  $2$-adjoints; but as they explain, this two-dimensional aspect of
  the result is easily deduced from the one-dimensional one in the
  presence of cotensor products in $\C$.} In fact, this result is now
a triviality: the inclusions~\eqref{eq:29} may be identified with the
cofree functors into the categories of weak maps, so their left
adjoints are given simply by the corresponding forgetful functors.

We suppose for the remainder of this section that $\mathsf T$ is an
accessible monad on a complete and cocomplete $2$-category $\C$. Since
$\C$ is cocomplete, we may form the \awfs for lalis on the underlying
category $\C_0$. Note that its underlying monad $\mathsf R$, being
induced as in Example~\ref{ex:4} by left Kan extension and
restriction, is cocontinuous. Moreover, the underlying ordinary monad
$\mathsf T_0$ is accessible since $\mathsf T$ is so, whence by
\cite[Proposition~14]{Bourke2014AWFS1} the \awfs for lalis admits a
projective lifting along $U \colon \TAlgs \to \C_0$, which we term the
\awfs for \emph{$U$-lalis} on $\TAlgs$. By starting instead from the
\awfs for retract equivalences on $\C_0$ we obtain the lifted \awfs
for \emph{$U$-retract equivalences} on $\TAlgs$; the underlying weak
factorisation system of this latter \awfs was constructed
in~\cite[\S4.4]{Lack2007Homotopy-theoretic}.

\label{sec:lax-oplax-pseudo}
As $\C$ admits an initial object, so too does $\TAlgs$, and so we may
form the category of left weak maps of any \awfs thereon. We will
prove that, in the case of $U$-lalis or $U$-retract equivalences the
inclusion $\TAlgs \to \Wkl$ may be identified with the inclusion of
$\TAlgs$ into $\TAlgl$ or $\TAlgp$; then the latter inclusions will
have left adjoints since the former ones do.

Consider first the case of $U$-lalis. If $f \colon A \to B$ in
$\TAlgs$ bears $U$-lali structure---meaning that it comes equipped with a
right adjoint section $p \colon UB \to UA$ of $Uf$ in $\C$---then by
the doctrinal adjunction
of~\cite[Proposition~1.3]{Kelly1974Doctrinal}, $p$ bears a
\emph{unique} structure of lax $\mathsf T$-algebra morphism $B
\rightsquigarrow A$ making it into a right adjoint section of $f$ in
$\TAlgl$. Using the unicity of the lax structure, it is easy to see
that this assignation yields a pullback of double categories as in the
left square below
\begin{equation*}
\cd{U\text-\dcat{Lali}(\TAlgs) \ar[r] \ar[d] & \dcat{Lali}(\TAlgl) \ar[d] \ar[r] & \dcat{SplEpi}(\TAlgl) \ar[d]\\
\Sq{\TAlgs} \ar[r]_{\Sq{J}} & \Sq{\TAlgl} \ar[r]_{\Sq{1}} & \Sq{\TAlgl} \rlap{ .}}
\end{equation*}
By Theorem~\ref{thm:WeakMaps} the composite
$K \colon U\text-\dcat{Lali}(\TAlgs) \to \dcat{SplEpi}(\TAlgl)$
induces an extension of $J \colon \TAlgs \to \TAlgl$ to a functor
$\bar J \colon \Wkl \to \TAlgl$, defined as follows. For each
$A \in \TAlgs$, form the free $U$-lali $\fr !_A \colon QA \to A$ with
underlying strict map $\epsilon_A \colon QA \rightarrow A$; applying
$K$ yields a splitting $q_A \colon A \rightsquigarrow QA$ for
$\epsilon_A$ in $\TAlgl$; and now $\bar J \colon \Wkl \to \TAlgl$ is
the identity on objects, and on maps sends $f \colon QA \to B$ to
$f \cdot q_A \colon A \rightsquigarrow B$. To complete the proof, it
suffices to show that $\bar J$ is an isomorphism; of course, it is
bijective on objects, and so it suffices to exhibit an inverse for
each function
$(\thg) \cdot q_{A} \colon \TAlgs(QA,B) \to \TAlgl(A,B)$.

So consider a lax morphism $f \colon A \rightsquigarrow B$. Since $\C$ is complete, \cite[Theorem~3.2]{Lack2005Limits} assures us that the arrow $f$ admits an oplax limit in $\TAlgl$, as to the left in:
\begin{equation*}
\cd{& B/f \ltwocell{d}{\lambda} \ar[dl]_{u} \ar[dr]^{v} \\
A \ar@{~>}[rr]_{f} & & B} \qquad \qquad
\cd{& A \ar[dl]_{1} \ar@{~>}[dr]^{f} \\
A \ar@{~>}[rr]_{f} & & B\rlap{ ,}}
\end{equation*}
whose projections are strict morphisms that \emph{jointly detect  strictness}---in the sense that any lax morphism $c \colon C \rightsquigarrow B / f$ with $uc$ and $vc$ strict, is itself strict.

Applying the universal property of $B/f$ to the cone to the right
above, we induce a unique lax morphism
$r \colon A \rightsquigarrow B/f$ with $ur=1$, $\lambda r=1$ and
$vr=f$. In fact, using the two-dimensional aspect of the universal
property, we see that $u \dashv r$ is a lali in $\TAlgl$, so that
$\alg u = (u, Ur) \colon B / f \to A$ is a $U$-lali in $\TAlgs$. The
map $(!_{B/f}, 1_A) \colon !_A \to u$ in $\C^\atwo$ thus induces a map
of $U$-lalis as on the left in:
\begin{equation*}
\cd{QA \ar[r]^{k} \ar[d]_{\fr !_{A}} & B/f \ar[d]^{\alg u}\\
A \ar[r]^{1} & A }
\quad \quad \quad
\cd{QA \ar[r]^{k} & B/f  \ar[r]^{v} & B\rlap{ ,} \\
A \ar@{~>}[u]^{q_{A}} \ar[r]^{1} & A \ar@{~>}[u]_{r} \ar@{~>}[ur]_{f}}
\end{equation*}
applying $K$ to which yields a morphism of split epis in $\TAlgl$; in
particular the diagram above right commutes, and so $vk \colon QA \to
B$ is a strict map with $(vk) q_A = f$ as required. It remains to
show unicity of $vk$: so given $s \colon QA \to B$ with $s q_A =
f$, we must show that $s=vk$. Writing $\gamma_A$ for the unit of the
adjunction $\epsilon_A \dashv q_A$ in $\TAlgl$, we have $s\gamma_A
\colon s \Rightarrow sq_A \epsilon_A = f\epsilon_A$, so that we have a
cone in $\TAlgl$ as on the left in:
\begin{equation*}
\cd{& QA \ltwocell{d}{s \gamma_{A}} \ar[dl]_{\epsilon_{A}} \ar[dr]^{s} \\
A \ar@{~>}[rr]_{f} & & B} \qquad \qquad 
\cd{QA \ar[r]^{\overline{s}} \ar[d]_{\fr !_A} & B/f \ar[d]^{\alg u}\\
A \ar[r]^{1} & A\rlap{ .} }
\end{equation*}
As both projections are strict, there is a unique strict factorisation
$\overline{s} \colon QA \to B/f$ with $u \overline s = \epsilon_A$ and
$\lambda \overline s = s \gamma_A$ and $v \overline s = s$. Using the
universal property of $B/f$, it is easy to show that $(\overline s,1)$
is a morphism of lalis $\fr !_A \to \alg u$ as on the right above,
whence by freeness of $\fr !_A$, we have $\overline s = k$ and so $s =
v \overline s = v k$ as required. Thus we have shown:
\begin{Thm}
  Let $\mathsf T$ be an accessible $2$-monad on a complete and
  cocomplete $2$-category. The category $\TAlgl$ of algebras and lax
  morphisms is equally the category of left weak maps for the \awfs
  for $U$-lalis; in particular, the inclusion $\TAlgs \to \TAlgl$ has
  a left adjoint with counit a lali in $\TAlgl$.
\end{Thm}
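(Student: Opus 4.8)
The plan is to realise $\TAlgl$ as the category of left weak maps $\Wkl$ of the \awfs for $U$-lalis by invoking the universal property of Theorem~\ref{thm:WeakMaps}. The key is to produce a concrete double functor
\[
K \colon U\text-\dcat{Lali}(\TAlgs) \to \dcat{SplEpi}(\TAlgl)
\]
lifting $\Sq{J}$, where $J \colon \TAlgs \to \TAlgl$ is the inclusion. First I would build $K$ using doctrinal adjunction: a $U$-lali $f$ carries a right adjoint section $p$ of $Uf$ in $\C$, and by~\cite[Proposition~1.3]{Kelly1974Doctrinal} this $p$ receives a \emph{unique} lax $\mathsf T$-morphism structure making it a right adjoint section of $f$ in $\TAlgl$; the uniqueness clause is exactly what renders the assignment functorial on squares. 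Theorem~\ref{thm:WeakMaps} then converts $K$ into a functor $\bar J \colon \Wkl \to \TAlgl$ extending $J$.

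Next I would identify $\bar J$ concretely. It is the identity on objects, and sends a Kleisli map $f \colon QA \to B$ to $f \cdot q_A$, where $q_A \colon A \rightsquigarrow QA$ is the lax section of $\epsilon_A$ obtained by applying $K$ to the free $U$-lali $\fr !_A$. Since $\bar J$ is a bijection on objects, it remains only to prove that each hom-map $(\thg) \cdot q_A \colon \TAlgs(QA,B) \to \TAlgl(A,B)$ is invertible.

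The hard part, and the step I expect to be the main obstacle, is constructing the inverse of this hom-map; this is where the two-dimensional structure is genuinely used. Given a lax morphism $f \colon A \rightsquigarrow B$, I would form its oplax limit (comma object) $B/f$ in $\TAlgl$ --- which exists because $\C$ is complete, by~\cite[Theorem~3.2]{Lack2005Limits} --- with strict projections $u, v$ that \emph{jointly detect strictness}. The universal property produces a lax section $r$ of $u$ with $u \dashv r$ a lali in $\TAlgl$, so $\alg u = (u, Ur)$ is a $U$-lali in $\TAlgs$; pushing the comparison $!_A \to u$ through $K$ then yields a strict map $vk \colon QA \to B$ with $(vk)\,q_A = f$, giving surjectivity. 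For injectivity I would take a strict $s \colon QA \to B$ with $s\,q_A = f$, use the unit of $\epsilon_A \dashv q_A$ to assemble a cone into $B/f$, and appeal to joint detection of strictness together with the freeness of $\fr !_A$ to force $s = vk$.

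The delicate thread running through the argument is the careful bookkeeping of strict versus lax morphisms: everything hinges on $B/f$ lying in $\TAlgl$ while its projections remain strict and jointly detect strictness, and on the uniqueness in doctrinal adjunction, which is what makes $K$ --- hence $\bar J$ --- well defined. Once $\bar J$ is an isomorphism, the left adjoint to $\TAlgs \to \TAlgl$ follows at once from the co-Kleisli adjunction for $\mathsf Q$, whose left adjoint is cofibrant replacement $A \mapsto QA$; its counit at $B$ is the strict map $\epsilon_B \colon QB \to B$, which together with the lax section $q_B$ is the advertised lali in $\TAlgl$.
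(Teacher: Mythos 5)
Your proposal is correct and follows essentially the same route as the paper's own proof: doctrinal adjunction yields the double functor $K$ and hence $\bar J$ via Theorem~\ref{thm:WeakMaps}, and the inverse hom-maps are built exactly as in the paper using the oplax limit $B/f$ with strict projections jointly detecting strictness, with uniqueness forced by the strict factorisation through $B/f$ and freeness of $\fr{!_A}$. No gaps to report.
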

Repeating this argument with $U$-retract equivalences in place of
$U$-lalis allows us to identify the left weak maps for the lifted
\awfs with the category $\TAlgp$ of $\mathsf T$-algebra
pseudomorphisms, and so to deduce the existence of a left adjoint to $\TAlgs
\to \TAlgp$. This pseudo case should be contrasted
with~\cite[Theorem~4.12]{Lack2007Homotopy-theoretic}: whereas our
result shows that the cofibrant replacement comonad of the $U$-retract
equivalence \awfs gives rise to the adjunction between strict and
pseudo algebra maps, Theorem 4.12 of \emph{ibid}.~starts by assuming
the adjunction between strict and pseudo maps, and deduces that the
induced comonad provides a notion of cofibrant replacement for the
$U$-retract equivalence \awfs.

\section{DG-enriched monad theory}
\label{sec:homological}Our second application of the theory of weak
maps will be to the description of \emph{homotopy-coherent} morphisms
between algebras for a dg-monad. First let us recall some basic
definitions. Fixing a commutative ring $R$,
we write $\cat{DG}$ for the category of unbounded chain complexes of
$R$-modules
\[
\cdots\xrightarrow{\partial} X_1 \xrightarrow{\partial} X_0 \xrightarrow{\partial} X_{-1}
\xrightarrow{\partial} \cdots
\]
This has a symmetric monoidal structure, whose unit $I$ satisfies $I_0
= R$ and $I_k = 0$ for $k \neq 0$, whose binary tensor is defined
by
\[
(X \otimes Y)_n = \textstyle\sum_{p+q=n} X_p \otimes Y_q \quad \text{and} \quad
\partial(x \otimes y) = \partial x \otimes y + (-1)^{\mathrm{deg}(x)}x
\otimes \partial y\rlap{ ,}
\]
and whose symmetry $\sigma \colon X \otimes Y \to Y \otimes X$
satisfies
$\sigma(x \otimes y) = (-1)^{\mathrm{deg}(x)\mathrm{deg}(y)}y \otimes
x$.
A \emph{dg-category}~\cite{Kelly1965Chain} is a category enriched in
$\cat{DG}$; it thus has $R$-modules of maps $\C(A,B)_n$ between any
two objects, whose elements we write as $f \colon A \to_n B$ and call
\emph{graded maps of degree $n$}. Graded maps have a bilinear
composition which adds degrees, and a differential $\partial$ such that
$\partial(gf) = \partial g \cdot f +
(-1)^{\mathrm{deg}(g)}g \cdot \partial f$
and $\partial(1_A) = 0$. Note that maps in the underlying ordinary
category of $\C$ are graded maps of degree $0$ with \emph{zero}
differential; we call such maps \emph{chain maps} and write them as
$f \colon A \to B$ with no subscript.

\subsection{Homotopy-coherent maps}
\label{sec:weak-maps-1}
For the rest of this section, we suppose that $\mathsf T$ is an
accessible dg-enriched monad on a cocomplete dg-category $\C$. We
have, of course, the dg-category $\TAlgs$ of $T$-algebras and
\emph{strict} maps; its object are $T$-algebras, and its graded maps
$f \colon (A,a) \to_i (B,b)$ are maps $f \colon A \to_i B$ in $\C$
such that $b \cdot Tf = f \cdot a$. However, we may also define a dg-category
$\TAlgw$ of \emph{homotopy-coherent maps}: its objects are again $T$-algebras,
while its graded maps $f \colon A \rightsquigarrow_i B$ are families
$(f_n \colon T^nA \to_{n+i} B)_{n \in \mathbb N}$ of graded maps in
$\C$ such that $f_n \cdot T^j \eta T^{n-j-1} = 0$ for all $0 \leqslant j
\leqslant n-1$. The differential of $f$ is the family
\[
(\partial f)_n = (\partial f_n) - (-1)^{i} [b \cdot Tf_{n\mn 1} +
f_{n\mn 1}\sum_{j=1}^{n-1}(-1)^{j} T^{j\mn 1} \mu_{T^{n\mn j\mn 1}A} +
(-1)^{n} f_{n\mn 1} \cdot T^{n\mn 1}a]\rlap{ .}
\]
The identity $A \rightsquigarrow_0 A$ has components $(1,0,0,\dots)$,
while the composite of $f \colon A \rightsquigarrow_i B$ with $g
\colon B \rightsquigarrow_k C$ is given by
\[
(gf)_n = \textstyle\sum_{p+q=n} (-1)^{pi} g_p \cdot T^p f_q\rlap{ .}
\]
There is an evident forgetful dg-functor $\TAlgw \to \C$
sending $f \colon
(A,a) \rightsquigarrow_i (B,b)$ to $f_0 \colon A \to_i B$, and an inclusion
$J \colon \TAlgs \to \TAlgw$ which is the identity on objects and
sends $f$ to $(f,0,0,0,\dots)$.

\begin{Ex}
  \label{ex:7}
  Let $\C$ and $\D$ be dg-categories with $\C$ small and $\D$
  cocomplete. We may identify the dg-functor category $[\C, \D]$ with
  the Eilenberg--Moore category $\TAlgs$ for the dg-monad $\mathsf T$
  on $[\ob \C, \D]$ induced by left Kan extension and restriction
  along the inclusion $\ob \C \to \C$. In this case, the dg-category
  $\TAlgw$ has dg-functors as objects and hom-objects given by the
  complexes of \emph{homotopy-coherent transformations} from $F$ to
  $G$ as defined in~\cite[\S3.1]{Tamarkin2007What}, for example.
\end{Ex}

\subsection{Homological lalis and $U$-lalis}
\label{sec:homological-lalis}
Our objective is now to show that, under suitable assumptions on
$\mathsf T$, we may obtain the underlying ordinary category of
$\TAlgw$ as the category of weak maps associated to an \awfs on
$\TAlgs$. The \awfs in question will be constructed using the
following analogue of the $2$-categorical lalis of
Example~\ref{ex:3}.

\begin{Defn}
  \label{def:1}
  A \emph{homological lali} in a dg-category $\A$ is a chain map $g
  \colon A \to B$ together with a section $q \colon B \to A$ and a
  graded map $\xi \colon A \to_1 A$ with $\partial \xi = 1 - qg$ and
  $g\xi = \xi q = \xi\xi = 0$. Homological lalis on $\C$ form a
  category $\cat{Lali}(\A)$, wherein a morphism $(u,v) \colon
  (g,q,\xi) \to (g',q',\xi')$ is a commuting square of chain maps
  $(u,v) \colon g \to g'$ such that $uq = q'v$ and $u\xi = \xi'u$.
  Homological lalis compose according to the formula
\[
A \xrightarrow{(g', q', \xi')} B \xrightarrow{(g,q,\xi)} C \qquad
\mapsto \qquad A
\xrightarrow{(gg', q'q, \xi' + q'\xi g')} C
\]
and so we obtain a double category $\dcat{Lali}(\A) \to
\dcat{Sq}(\A_0)$ of lalis which is concrete and right-connected over
$\A_0$ via the double functor sending $(g,q,\xi)$ to $g$.
\end{Defn}
Arguing as in Example~\ref{ex:4}, the homological lalis will comprise
the right class of an \awfs in any sufficiently cocomplete dg-category
$\A$; in this case, the colimits required are those for \emph{mapping
  cylinders}~\cite[\S1.5.5]{Weibel1994An-introduction}. The underlying
weak factorisation system of this \awfs is the (cofibration, trivial
fibration) part of a model structure on $\A$, constructed
in~\cite[Theorem~2.2]{Christensen2002Quillen}, and there called the
\emph{relative model structure} for the trivial projective class.

In particular, the cocomplete dg-category $\C$ we are considering
admits the \awfs for homological lalis; as the dg-monad $\mathsf T$
thereon is accessible, we may argue as in the preceding section to
projectively lift this \awfs to one on the underlying category of
$\TAlgs$, which as before we call the \awfs for \emph{$U$-lalis}. As
$\C$ admits an initial object, so too does $\TAlgs$, and so we may
form the category $\Wkl$ of left weak maps associated to the \awfs for
$U$-lalis. In the remainder of this section, we will prove the
following result; the side condition on preservation of codescent
objects will be explained shortly.

\begin{Thm}\label{thm:dg}
  Let $\mathsf T$ be an accessible dg-monad on a cocomplete
  dg-category. If $\mathsf T$ preserves codescent objects, then the
  underlying category of $\TAlgw$ is equally the category of left weak
  maps for the \awfs for $U$-lalis; in particular, the inclusion 
  $\TAlgs \to \TAlgw$ has a left adjoint with counit a lali in
  $\TAlgw$.
\end{Thm}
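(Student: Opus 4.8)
The plan is to mimic the structure of the $U$-lali theorem from the two-dimensional case (Theorem preceding this section), replacing the oplax-limit argument with its homological analogue. The central task is to identify the category of left weak maps $\Wkl$ for the $U$-lali \awfs with the underlying ordinary category of $\TAlgw$, and I would do this by producing an isomorphism $\bar J \colon \Wkl \to \TAlgw$ extending $J \colon \TAlgs \to \TAlgw$. As in the previous section, Theorem~\ref{thm:WeakMaps} reduces the construction of $\bar J$ to exhibiting a lifting of $J$ to a concrete double functor $\dcat{Lali}(\TAlgs) \to \dcat{SplEpi}(\TAlgw)$; concretely, for each $A$ I form the free $U$-lali $\fr !_A \colon QA \to A$ with underlying strict map $\epsilon_A$, obtain a Kleisli splitting $q_A \colon A \rightsquigarrow QA$ of $\epsilon_A$ in $\TAlgw$, and define $\bar J$ on objects as the identity and on a map $f \colon QA \to B$ by $f \cdot q_A$. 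Since $\bar J$ is bijective on objects, it suffices to show that $(\thg) \cdot q_A \colon \TAlgs(QA, B) \to \TAlgw(A, B)$ is a bijection for each $B$.

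\emph{First} I would establish surjectivity. Given a homotopy-coherent map $f \colon A \rightsquigarrow B$, I need a homological analogue of the oplax limit $B/f$ used in the $2$-categorical proof. The natural candidate is a \emph{mapping-path-object} type construction in $\TAlgw$: an object through which $f$ factors as a strict map preceded by a $U$-lali, built from the same mapping-cylinder colimits that produce the homological lali \awfs in the first place. Applying the forgetful double functor $K$ from Theorem~\ref{thm:WeakMaps} to the induced map of $U$-lalis then yields a morphism of split epis in $\TAlgw$, exhibiting $f$ as $(vk) \cdot q_A$ for a strict map $vk \colon QA \to B$. \emph{Then} uniqueness follows, as before, by using the universal property of the factoring object to show that any strict $s$ with $s \cdot q_A = f$ must coincide with $vk$.

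The side hypothesis that $\mathsf T$ \emph{preserves codescent objects} is where I expect the main obstacle to lie, and it is precisely the input that makes the surjectivity step go through. By Theorem~\ref{thm:pullback} and the discussion of Section~\ref{sec:weak-maps-as}, the category of weak maps is computed as a codescent object (the colimit $S \star \DAlg{R}$), and since the \awfs on $\TAlgs$ is the projective lifting along $U \colon \TAlgs \to \C_0$, identifying $\Wkl$ concretely requires that this colimit be computed as in $\C$. The forgetful functor $U$ creates the relevant colimits exactly when $\mathsf T$ preserves them; thus the hypothesis guarantees that the weak-map category over $\TAlgs$ is obtained by lifting the corresponding construction over $\C$, where an explicit description of the homotopy-coherent maps is available. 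The hard part will therefore be the bookkeeping that identifies the components $(f_n \colon T^n A \to_{n+i} B)$ of a homotopy-coherent map, together with the twisted differential and composition formulae recorded above, with the data produced by the codescent/free-$U$-lali construction; the signs and the coherence conditions $f_n \cdot T^j \eta T^{n-j-1} = 0$ must be matched against the combinatorics of the bar-type construction that underlies $Q$.

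\emph{Finally}, once $\bar J$ is shown to be an isomorphism, the stated adjunction is immediate: under the identification $\TAlgw \cong \Wkl$, the inclusion $J$ becomes the cofree functor $\TAlgs \to \Kl{Q}$, whose left adjoint is the forgetful functor of the comonad $\mathsf Q$, and whose counit is the $U$-lali $\fr !_A$ viewed in $\TAlgw$.
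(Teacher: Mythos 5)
There is a genuine gap at the heart of your surjectivity step. You transport the $2$-categorical argument by replacing the oplax limit $B/f$ with a ``mapping-path-object'' in $\TAlgw$; but this is exactly what breaks in the dg setting. In the $2$-categorical case, \cite[Theorem~3.2]{Lack2005Limits} guarantees that oplax limits of arrows exist in $\TAlgl$ with strict projections that jointly detect strictness; the paper explicitly remarks that the dg analogue (the mapping path space) ``does not appear to be true'' to lift along the forgetful dg-functor $\TAlgw \to \C$. Your proposal supplies neither a construction of such an object in $\TAlgw$ nor the universal property (strict projections jointly detecting strictness) on which both your surjectivity and your uniqueness arguments depend, and the mapping-cylinder colimits you invoke live in $\C$, not in $\TAlgw$. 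The paper is therefore forced onto a different route: it computes the free $U$-lali $\fr !_A \colon QA \to A$ \emph{explicitly} as the bar construction. It introduces dg-enriched codescent objects $\abs X = \mathbf{\Delta} \star X$ of simplicial objects, shows that a contraction $s_{-1}$ on an augmented simplicial object induces a homological lali structure $(p,q,\xi)$ on the comparison map, and proves in Proposition~\ref{prop:2}---by an inductive construction of the comparison map $h$ with $h\iota_0 = b \cdot Tf$ and $h\iota_{n+1} = b \cdot T(\epsilon h \iota_n)$, together with verification of the chain-map and lali-map equations---that the codescent object of the bar complex is the free $U$-lali. With $QA$ in hand, bijectivity of $(\thg) \cdot q_A$ is then checked directly: the splitting has components $\bar q_n = \iota_n \eta_{T^n A}$, and the inverse sends $g \colon A \rightsquigarrow B$ to the strict map determined by $\bar f \iota_n = b \cdot Tg_n$. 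None of this is recoverable from your outline.

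Two further inaccuracies. First, you misplace where the hypothesis that $\mathsf T$ preserves codescent objects enters. It does not concern the $2$-categorical colimit $S \star \DAlg{R}$ presenting $\Wkl$---that category exists with no side hypotheses, as $\Kl{Q}$, since $\TAlgs$ has an initial object. The hypothesis concerns the dg-enriched codescent objects just described: it ensures that $U \colon \TAlgs \to \C$ preserves the codescent object of the bar complex $A_\bullet$, so that the contraction by units $\eta$ on $UA_\bullet$ (which exists only in $\C$, not in $\TAlgs$) endows the comparison map with its homological lali structure, i.e.\ makes $(p,q,\xi)$ a $U$-lali at all. Second, the lifting of $J$ to a double functor $U\text-\dcat{Lali}(\TAlgs) \to \dcat{SplEpi}(\TAlgw)$, which you take as given, is itself nontrivial here: in the $2$-categorical case it came from Kelly's doctrinal adjunction, and its dg analogue is Lemma~\ref{lem:12}, asserting that each $U$-lali lifts \emph{uniquely} to a lali in $\TAlgw$ subject to the normalisations $\epsilon_0 f_k = \epsilon_0 \epsilon_k = 0$; the uniqueness clause is what makes the double functor well defined on squares and compatible with composition, and it requires its own inductive proof. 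Your closing paragraph deducing the adjunction from the isomorphism $\bar J$ is fine, but it rests on the two unestablished steps above.
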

A key step in proving this is the following lemma, which is an
analogue of the doctrinal adjunction~\cite{Kelly1974Doctrinal} we used
in the preceding section.

\begin{Lemma}
  \label{lem:12}
  If $(g, f_0, \epsilon_0) \colon (B,b) \to (A,a)$ is a $U$-lali in
  $\TAlgs$, then there is a unique lali $(g, f, \epsilon)$ in $\TAlgw$
  with $Uf = f_0$ and $U\epsilon = \epsilon_0$ and $\epsilon_0 f_k =
  \epsilon_0 \epsilon_k = 0$ for all $k$.
\end{Lemma}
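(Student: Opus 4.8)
The plan is to reduce both existence and uniqueness to a single \emph{vanishing principle} driven by the contracting homotopy $\epsilon_0$. From the defining identity $\partial\epsilon_0 = 1 - f_0 g$ and the Leibniz rule, every graded map $h \colon X \to_m B$ into $B$ admits the decomposition
\[
h = f_0\,g\,h + \partial(\epsilon_0 h) + \epsilon_0\,\partial h .
\]
I would first deduce from this that \emph{any} homotopy-coherent map $h$ into $(B,b)$ with $\partial h = 0$ and $g h_n = \epsilon_0 h_n = 0$ for all $n$ must be zero. Indeed the differential of $\TAlgw$ has the shape $(\partial h)_n = \partial h_n + (\text{terms in } h_{n-1})$, so an induction on $n$ applies: if $h_{n-1}=0$ then $\partial h_n = (\partial h)_n = 0$, whence the displayed decomposition gives $h_n = f_0\,g\,h_n + \partial(\epsilon_0 h_n) + \epsilon_0\,\partial h_n = 0$.

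Uniqueness is then immediate and simultaneously reveals the construction. Since $g$ is strict, the composites $gf$, $g\epsilon$ and $fg$ collapse to $g f_n$, $g\epsilon_n$ and $f_n\cdot T^n g$; thus $gf = 1$ and $g\epsilon = 0$ say exactly that $g f_0 = 1$, $g\epsilon_0 = 0$ and $gf_n = g\epsilon_n = 0$ for $n\geq 1$. Applying the decomposition to $f_n$ and using $gf_n = \epsilon_0 f_n = 0$ gives $f_n = \epsilon_0\,\partial f_n$; and since $\partial f = 0$ forces $\partial f_n$ to equal the lower-order bar term $B_n$ determined by $f_{n-1}$, we obtain $f_n = \epsilon_0 B_n$. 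Likewise $\epsilon_n = \epsilon_0\,\partial\epsilon_n$, and $\partial\epsilon = 1 - fg$ together with $\epsilon_0 f_n = 0$ forces $\epsilon_n = -\epsilon_0 C_n$ for the corresponding bar term $C_n$ in $\epsilon_{n-1}$. Thus the imposed normalizations pin $f$ and $\epsilon$ down uniquely from $f_0,\epsilon_0$, which also settles uniqueness: a difference of two lifts is a cycle killed by $g$ and $\epsilon_0$, hence zero by the vanishing principle.

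For existence I would take these recursions as \emph{definitions}, $f_n \defeq \epsilon_0 B_n$ and $\epsilon_n \defeq -\epsilon_0 C_n$ for $n\geq 1$. The relations $\epsilon_0\epsilon_0 = g\epsilon_0 = 0$ then make $gf_n = g\epsilon_n = 0$ and $\epsilon_0 f_n = \epsilon_0\epsilon_n = 0$ hold automatically, so that $Uf = f_0$, $U\epsilon = \epsilon_0$, $gf = 1$ and $g\epsilon = 0$ are all in place. The remaining lali axioms I would establish through the vanishing principle rather than by hand: $\partial f$ is homotopy-coherent with $g(\partial f)_n = 0$ and, using $f_n = \epsilon_0 B_n$ together with $\epsilon_0 f_0 = \epsilon_0\epsilon_0 = 0$, also $\epsilon_0(\partial f)_n = 0$, so $\partial f = 0$; next $\partial\epsilon - (1 - fg)$ satisfies the same three hypotheses, giving $\partial\epsilon = 1 - fg$; and finally $\epsilon f$, and then $\epsilon\epsilon$, are cycles killed by $g$ and by $\epsilon_0$ (here invoking the already-proved $\partial\epsilon = 1-fg$, $\partial f = 0$, $gf=1$, $g\epsilon=0$, and, for $\epsilon\epsilon$, $\epsilon f = 0$), so they vanish as well.

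The one genuinely computational step — and the main obstacle — is to check that the families $f_n = \epsilon_0 B_n$ and $\epsilon_n = -\epsilon_0 C_n$ are \emph{normalized}, i.e.\ that $f_n\cdot T^j\eta T^{n-j-1} = 0$ and similarly for $\epsilon_n$. This reduces to showing that the bar terms $B_n, C_n$ lie in the normalized part, which is the standard simplicial fact that the bar differential preserves normalized chains; I would verify it by an induction using the monad unit laws $\mu\cdot T\eta = \mu\cdot\eta T = 1$ and the normalization of the lower components. Once this bookkeeping is dispatched, the vanishing principle delivers all the lali identities cleanly.
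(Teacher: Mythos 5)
Your proof is correct, and while it lands on the same recursion as the paper, it gets there by a genuinely different route. The paper simply posits $f_n = \epsilon_0 b\cdot Tf_{n-1}$ and $\epsilon_n = -\epsilon_0 b\cdot T\epsilon_{n-1}$ (exactly what your $f_n = \epsilon_0 B_n$ and $\epsilon_n = -\epsilon_0 C_n$ collapse to once $\epsilon_0 f_{n-1} = \epsilon_0\epsilon_{n-1} = 0$ kill all but the leading bar term), dismisses the lali axioms as ``a short calculation'', and proves uniqueness by a bare-hands induction on components: from $0 = (\epsilon' f)_n = \epsilon'_n\cdot T^n f_0$ it computes $-f'_n = (\partial\epsilon')_n\cdot T^n f_0 = -f_n$ by unwinding the differential formula term by term. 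You instead factor everything through one reusable vanishing principle, proved by the same style of induction but stated once: a coherent family $h$ with $\partial h = 0$ and $g h_n = \epsilon_0 h_n = 0$ vanishes, via $h_n = f_0\,g\,h_n + \partial(\epsilon_0 h_n) + \epsilon_0\,\partial h_n$. This buys you that the recursion is \emph{derived} rather than guessed; that uniqueness is one line (with the small proviso that $\partial(\epsilon-\epsilon') = (f'-f)g$, so you must conclude $f = f'$ before $\epsilon = \epsilon'$ --- your recursion-based formulation already respects this order); and that each suppressed ``short calculation'' ($\partial f = 0$, $\partial\epsilon = 1-fg$, $\epsilon f = 0$, $\epsilon\epsilon = 0$) becomes a routine three-hypothesis check. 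I verified these checks go through: e.g.\ $\epsilon_0(\partial f)_n = \epsilon_0\,\partial(\epsilon_0 B_n) - \epsilon_0 B_n = 0$ using $\epsilon_0 f_0 = \epsilon_0\epsilon_0 = 0$, and $g(\partial f)_n = 0$ using $gb = a\cdot Tg$ together with $gf_{n-1} = 0$ (for $n = 1$, $gB_1 = a\cdot T(gf_0) - gf_0\cdot a = a - a = 0$). Your approach also surfaces a point the paper's proof passes over silently: that the constructed families are normalized, i.e.\ genuinely live in $\TAlgw$. Your reduction of this to normalization of the bar terms is sound: composing $B_n$ with a degeneracy $T^j\eta T^{n-j-1}$, the two identity terms produced by the unit laws $\mu\cdot\eta T = \mu\cdot T\eta = 1$ (at the boundary cases $j=0$ and $j=n-1$, by $b\cdot\eta_B = 1$ and $a\cdot\eta_A = 1$) occur with opposite signs and cancel, and every remaining term dies by normalization of $f_{n-1}$; note only that in applying the vanishing principle to $\partial f$ and its kin you implicitly use that the differential of $\TAlgw$ preserves normalized families, which is part of $\TAlgw$ being a dg-category.
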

\begin{proof}
The zero components of $f$ and $\epsilon$ are $f_0$ and
$\epsilon_0$, and for $n>0$ we take
\[
f_{n} = \epsilon_0b \cdot Tf_{n-1} \qquad \text{and} \qquad
\epsilon_{n} = -\epsilon_0b \cdot Te_{n-1}\rlap{ .}
\]
A short calculation shows that $(g,f,\epsilon)$ is indeed a lali with
$Uf = f_0$ and $U\epsilon = \epsilon_0$ and $\epsilon_0 f_k =
\epsilon_0 \epsilon_k = 0$ for all $k$. Suppose now that $(f',
\epsilon')$ also satisfies these conditions; we show by induction on
$n$ that $f'_n = f_n$ and $\epsilon'_n = \epsilon_n$ for all $n$. The
case $n = 0$ is clear. For the inductive step, assume the result for
all $m < n$. It is easy to see that $\epsilon_p \cdot T^p f_q = 0$ for all
$p$ and $q$, and so
\[
0 = (\epsilon' f)_n = \textstyle\sum_{p+q = n} \epsilon_p' \cdot  T^p f_q = \epsilon_n'
 \cdot T^n f_0\rlap{ .}
\]
We now have that $-f'_n = -f'_n \cdot T^n g \cdot T^n f_0 = (1 - f'g)_n \cdot T^nf_0 =
(\mathbf d\epsilon')_n \cdot T^n f_0$; and so that
\begin{align*}
-f'_n = (\mathbf d\epsilon')_n \cdot T^n f_0 & = \mathbf d(\epsilon'_n)T^nf_{0} +
b \cdot T\epsilon_{n-1} \cdot T^nf_0 - \epsilon_{n-1} \textstyle\sum_{j=0}^{n-1}(-1)^j
d_j T^nf_0\\
& = - \epsilon_{n-1} \textstyle\sum_{j=0}^{n-2}(-1)^j
T^{n-1} f_0 \cdot d_j + (-1)^{n}\epsilon_{n-1} \cdot T^{n-1}b \cdot T^nf_0\\
& = (-1)^{n}\epsilon_{n-1}T^{n-1}(b \cdot Tf_0) = -f_n\rlap{ ;}
\end{align*}
the calculation that $\epsilon'_n = \epsilon_n$ is identical in form.
\end{proof}
It follows easily from the existence and uniqueness established in
this result that there is a lifting of the inclusion $J \colon \TAlgs
\to \TAlgw$ to a double functor as on the left in:
\begin{equation*}
\cd{U\text-\dcat{Lali}(\TAlgs) \ar[r] \ar[d] & \dcat{Lali}(\TAlgw) \ar[d] \ar[r] & \dcat{SplEpi}(\TAlgw) \ar[d]\\
\Sq{\TAlgs} \ar[r]_{\Sq{J}} & \Sq{\TAlgw} \ar[r]_{\Sq{1}} & \Sq{\TAlgw} \rlap{ .}}
\end{equation*}
So by Theorem~\ref{thm:WeakMaps}, the composite 
$U\text-\dcat{Lali}(T\text-\cat{Alg}_s) \to
\dcat{SplEpi}(T\text-\cat{Alg}_w)$ induces an extension of $J \colon
\TAlgs \to \TAlgw$ to a functor
\begin{equation}
\bar J \colon \Wkl \to \TAlgw\label{eq:7}\rlap{ .}
\end{equation}
To complete the proof of Theorem~\ref{thm:dg}, it remains to show that
this functor is an isomorphism. In the two-dimensional case, we did
this using the fact that oplax limits of morphisms lift along the
forgetful 2-functor $\TAlgl \to \C$. The analogous limit in the
dg-enriched case is the \emph{mapping path space}, but unfortunately,
it does not appear to be true that these limits lift along the
forgetful dg-functor $\TAlgw \to \C$. We are therefore forced to take
a different approach: we will compute the free $U$-lali
$\fr !_A \colon QA \to A$ explicitly, and use this to show directly
that~\eqref{eq:7} is an isomorphism. It turns out that this free
$U$-lali is obtained precisely by the familiar \emph{bar construction}
of~\cite{Cartan1956Homological}.

\subsection{Codescent objects and homological lalis}
\label{sec:weak-maps-codescent}
Let $\mathbf{\Delta}
\colon \Delta \to \cat{DG}$ be the functor sending $[n]$ to the
standard homological $n$-simplex
\[
\mathbf \Delta[n]_k = \bigoplus_{f \colon [k] \rightarrowtail [n]}
\!\!\! R \qquad \text{with} \qquad \partial(f) =
\textstyle\sum_{j=0}^n (-1)^j f \delta_i\rlap{ .}
\]
By the \emph{codescent object} of a simplicial diagram $X \colon
\Delta^\op \to \C$ in a dg-category, we mean the weighted colimit $\abs X =
\mathbf \Delta \star X$. The colimiting cocone comprises graded maps
$\iota_n \colon X_n \to_n \abs X$ such that $\iota_n s_j = 0$ for all $0
\leqslant j < n$ and such that
\[ \partial(\iota_0) = 0 \qquad \text{and} \qquad \partial(\iota_n) =
\iota_{n-1}\textstyle\sum_{j = 0}^n (-1)^j d_j \text{ for $n > 0$}\rlap{ ;}
\]
composition with these maps induces a bijection between maps $f \colon
\abs X \to_i B$ in $\C$, and families of maps $f_n \colon X_n \to_{n+i} B$
such that $f_ns_j = 0$ for all $0 \leqslant j < n$. 
If $X$ is an \emph{augmented} simplicial object---so equipped with a
map $d_0 \colon X_0 \to X_{-1}$ satisfying $d_0 d_0 = d_0 d_1$---then
there is an induced chain map $p \colon \abs X \to X_{-1}$
characterised by $p \iota_0 = d_0$ and $p \iota_j = 0$ for $j > 0$. 

There are essentially well-known conditions under which this induced
$q$ will be a homological lali. By a \emph{contraction}~\cite[\S
III.5]{Goerss1999Simplicial} on an augmented simplicial object $X$, we
mean maps $s_{-1}$ as in
\[
\cd[@C+2em]{
{} \ar@{}[r]|{\cdots} & X_2 \ar@<14pt>[r]|{d_0} \ar@<-14pt>[r]|{d_2} \ar[r]|{d_1}
\ar@<7pt>@{<-}[r]|{s_0} \ar@<-7pt>@{<-}[r]|{s_1}
\ar@<14pt>@/^1em/@{<--}[r]^{s_{-1}} & X_1
\ar@<7pt>[r]|{d_0} \ar@<-7pt>[r]|{d_1}
\ar@<7pt>@/^1em/@{<--}[r]^{s_{-1}} \ar@{<-}[r]|{s_0} & X_0
\ar@{->}[r]|{d_0} \ar@/^1em/@{<--}[r]^{s_{-1}} & X_{-1}
}
\]
satisfying $d_0 s_{-1} = 1$ and $d_{i+1}s_{-1} = s_{-1} d_i$ and
$s_{j+1}s_{-1} = s_{-1} s_j$. Such a contraction induces a structure
of homological lali on the comparison $q \colon \abs X \to X_{-1}$; indeed,
  $p$ has the section $q = \iota_0 s_{-1} \colon X_{-1} \to
  X_0 \to \abs X$, and we define $\xi \colon \abs X
  \to_1 \abs X$ to
  be the unique graded map with
  \[
  \xi \iota_n = \iota_{n+1} s_{-1} \colon X_n \to X_{n+1}
  \to_{n+1} \abs X\rlap{ .}
  \]
  Straightforward calculation now shows that $(p,q,\xi)$ is a
  homological lali.

\subsection{Codescent objects and weak maps}
\label{sec:codesc-objects-weak}
Using the preceding result, we may now give an explicit construction
of the free $U$-lali $\fr !_A \colon QA \to A$ for a dg-monad $\mathsf
T$ which preserves codescent objects. Given $a \colon TA \to A$ a
$T$-algebra, we consider in $\TAlgs$ its \emph{bar complex}, the
augmented simplicial object $A_{\bullet}$ as in the solid part of
\[
\cd[@C+2em]{
\cdots\ & T^3 A \ar@<15pt>[r]|{\mu T} \ar@<-15pt>[r]|{T^2a}
\ar[r]|{T\mu} \ar@<15pt>@/^1em/@{<--}[r]^{\eta T^2}
\ar@<7.5pt>@{<-}[r]|{T\eta T} \ar@<-7.5pt>@{<-}[r]|{T^2\eta} & T^2 A
\ar@<7.5pt>[r]|{\mu} \ar@<-7.5pt>[r]|{Ta} \ar@{<-}[r]|{T\eta} 
\ar@<7.5pt>@/^1em/@{<--}[r]^{\eta T}& TA \ar[r]|a
\ar@/^1em/@{<--}[r]^{\eta}& A
}
\]
where each vertex except for the rightmost one bears its free algebra
structure. We will continue to use $s_j$ and $d_j$ to denote the face and
degeneracy maps; explicitly we have that:
  \begin{equation*}
s_j  \colon T^n A
\to T^{n+1}A = T^{j+1} \eta_{T_{n-j-1}A} \quad \text{for $-1 \leqslant j < n$;}
\end{equation*}
\begin{equation*}
  \text{and} \qquad d_j \colon T^{n+1}A \to T^n = \begin{cases}T^j \mu_{T^{n-j-1}A}
 &\text{for $0 \leqslant j < n$;}\\
  T^n a & \text{for $j = n$.}
\end{cases}
\end{equation*}
Note that we have $Ts_i = s_{i+1}$ and $Td_i = d_{i+1}$. Let $QA$ be
the codescent object of this bar complex and $p \colon QA \to A$
the comparison map to the augmentation. Note that the algebra
structure $\bar a \colon TQA \to QA$ is uniquely determined by the
equations
\[
\bar a \cdot T\iota_n = \iota_{n+1}d_0 \colon T^{n+2} A \to QA\rlap{ .}
\]
Since $T$ preserves codescent
objects so does $U \colon \TAlgs \to \C$, and so $UQA$ is the
codescent object of $UA_{\bullet}$ in $\C$. The unit maps $\eta$ equip
$UA_\bullet$ with a contraction, and so $Uq$ is part of a lali
$(Up, q, \xi)$ in $\C$; thus $(p,q,\xi) \colon QA \to A$ is a
$U$-lali. 
\begin{Prop}
  \label{prop:2}
  With the above assumptions and notation, $(p, q, \xi)$ is the
  free $U$-lali $\fr !_A \colon QA \to A$.
\end{Prop}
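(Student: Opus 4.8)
The plan is to show directly that $(p,q,\xi)$ has the universal property of the free $U$-lali on $!_A \colon 0 \to A$. Since the $\mathsf R$-algebras of the $U$-lali \awfs are exactly the $U$-lalis, this freeness amounts to the assertion that, for every $U$-lali $(g,r,\zeta) \colon C \to D$ in $\TAlgs$, precomposition with the unit $\eta_{!_A}$ induces a bijection between morphisms of $U$-lalis $(p,q,\xi) \to (g,r,\zeta)$ and chain maps $k \colon A \to D$ in $\TAlgs$; here the latter set is $\TAlgs^\atwo(!_A, g) \cong \TAlgs(A,D)$, as $0$ is initial. Unravelling Definition~\ref{def:1}, such a morphism is a pair of chain maps $(\bar k, k)$ with $g\bar k = kp$, $\bar k q = rk$ and $\bar k\xi = \zeta\bar k$, and the bijection sends $(\bar k, k)$ to $k$.

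First I would describe a candidate $\bar k \colon QA \to C$ through its codescent components $\kappa_n \defeq \bar k\iota_n \colon T^{n+1}A \to_n C$. Since $T^{n+1}A$ is free on $T^nA$, each $\kappa_n$ is determined by its adjunct $\lambda_n \defeq \kappa_n \eta_{T^nA} \colon T^nA \to_n UC$; and because $q = \iota_0\eta_A$ and $\xi\iota_n = \iota_{n+1}\eta_{T^{n+1}A}$, the conditions $\bar k q = rk$ and $\bar k\xi = \zeta\bar k$ translate into the recursion $\lambda_0 = rk$ and $\lambda_{n} = \zeta\kappa_{n-1}$. As this recursion determines the $\lambda_n$, hence the $\kappa_n$, hence $\bar k$, uniquely in terms of $k$, uniqueness is immediate; it then remains to check that the $\kappa_n$ so prescribed assemble into a genuine morphism of $U$-lalis.

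For existence I would set $\kappa_0 = c\cdot T(rk)$ and $\kappa_n = c\cdot T(\zeta\kappa_{n-1})$ for $n>0$, with $c \colon TC \to C$ the algebra structure of $C$, and verify three things. The \emph{cocone conditions}---the degeneracy relations $\kappa_n s_j = 0$ together with $\partial\kappa_0 = 0$ and $\partial\kappa_n = \kappa_{n-1}\sum_{j=0}^n (-1)^j d_j$---are exactly what is needed for the $\kappa_n$ to induce a chain map $\bar k \colon QA \to C$ in $\TAlgs$ via the codescent universal property of $QA$. The \emph{square condition} $g\bar k = kp$ holds on components: using $gc = d\cdot Tg$, $gr = 1$ and that $k$ is a strict algebra map, one gets $g\kappa_0 = d\cdot T(grk) = d\cdot Tk = ka = kp\iota_0$, while $g\kappa_n = d\cdot T(g\zeta\kappa_{n-1}) = 0 = kp\iota_n$ for $n>0$ since $g\zeta = 0$. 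Finally $\bar k q = rk$ and $\bar k\xi = \zeta\bar k$ hold by the very construction of the recursion.

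The main obstacle is the differential identity $\partial\kappa_n = \kappa_{n-1}\sum_{j=0}^n (-1)^j d_j$, where the homological-lali relations $\partial\zeta = 1 - rg$ and $g\zeta = \zeta r = \zeta\zeta = 0$ of Definition~\ref{def:1} must be combined with the simplicial identities for the bar complex. I would prove it by induction, expanding $\partial\kappa_n = c\cdot T\,\partial(\zeta\kappa_{n-1})$ via the Leibniz rule and $\partial\zeta = 1-rg$, and then matching the resulting terms against $\kappa_{n-1}\sum_j (-1)^j d_j$ with the help of the explicit formulas $d_j = T^j\mu_{T^{n-j-1}A}$, $d_n = T^n a$ and $s_j = T^{j+1}\eta$; this computation is bookkeeping-heavy but similar in spirit to the inductive calculation in Lemma~\ref{lem:12} and to the differential appearing in the definition of $\TAlgw$. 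The degeneracy conditions $\kappa_n s_j = 0$ follow more easily from $\zeta r = 0$ and the simplicial identities. Once these are in place, $(\bar k, k)$ is the desired morphism of $U$-lalis, and the bijection above shows $(p,q,\xi) = \fr !_A$.
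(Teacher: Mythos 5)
Your proposal is correct and is essentially the paper's own argument: your forced formulas $\kappa_0 = c\cdot T(rk)$ and $\kappa_n = c\cdot T(\zeta\kappa_{n-1})$ are exactly the paper's $h_0 = b\cdot Tf$ and $h_{n+1} = b\cdot T(\epsilon h_n)$ (the paper works only with a $U$-lali over $A$ and bottom map $1_A$, which suffices since the free $U$-lali already exists by the projective lifting, while you verify the full universal property over an arbitrary $k \colon A \to D$ --- a harmless strengthening), and your uniqueness-then-existence scheme via the codescent components, the degeneracy conditions $\kappa_n s_j = 0$, and the square condition $g\bar k = kp$ matches the paper's step for step. The inductive differential identity you defer is precisely the displayed calculation $\partial(h_n) = b\cdot T\bigl((1-fg)h_{n-1} - \epsilon\,\partial(h_{n-1})\bigr) = h\,\partial(\iota_n)$ carried out in the paper's proof, so your plan goes through as stated.
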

\begin{proof}
  Let $(g, f, \epsilon) \colon B \to A$ be a $U$-lali. We must show
  there is a unique $\mathsf T$-algebra map $h \colon QA \to B$
  comprising a map of $U$-lalis $(p, q, \xi) \to (g, f, \epsilon)$;
  this means that $gh = p$ and $f = hq$ and $\epsilon h = h\xi$. Now
  if we are to have $f = hq$ then
  $f = hq = h\iota_0 s_{-1} = h\iota_0\eta_A\colon A \to B$; since
  $h\iota_0$ is to be a map of $T$-algebras, this forces
  $h\iota_0 = b \cdot Tf \colon TA \to B$. Similarly, if we are to
  have $\epsilon h = h\xi$, then
  $\epsilon h \iota_n = h \xi \iota_n = h \iota_{n+1}s_{-1} = h
  \iota_{n+1}\eta_{T^{n+1}A} \colon T^{n+1} A \to B$;
  which since $h \iota_{n+1}$ is a $T$-algebra map forces
  $h\iota_{n+1} = b \cdot T(\epsilon h \iota_n) \colon T^{n+2} A \to
  B$.
  Since $h \colon QA \to B$ is determined by its precomposites with
  the $\iota_n$'s, this proves the uniqueness of $h$, and it remains
  only to check that these definitions do indeed yield a map of
  $U$-lalis. So let the algebra maps $h_n \colon T^{n+1} A \to_n B$ be
  defined by $h_0 = b \cdot Tf$ and
  $h_{n+1} = b \cdot T(\epsilon h_n)$; it follows easily that we have
\begin{gather*}
h_0 s_{-1} = f \quad\text{and} \quad h_{n+1} s_{-1} = \epsilon h_n \quad\text{and}\quad
h_n d_0 = b \cdot Th_n\rlap{ .}
\end{gather*}
We first prove $h_n s_j = 0$
for all $0 \leqslant j < n$. We have that
\[
h_{n} s_j = b \cdot T(\epsilon h_{n-1}) \cdot T(s_{j-1}) = b \cdot T(\epsilon
h_{n-1} s_{j-1})\rlap{ .}
\]
If $j = 0$, then this is zero since $\epsilon h_0 s_{-1} = \epsilon f
= 0$ and $\epsilon h_{n+1} s_{-1} = \epsilon \epsilon h_n = 0$. If $j
> 0$, this is zero by induction on $j$. So by the universal property
of $QA$, there is a unique algebra map $h \colon QA \to_0 B$ with $h
\iota_n = h_n$. It remains to check that:
\begin{itemize}
\item $f = hq$ and $\epsilon h = h \xi$; which is forced by the method
  of definition.
\item $gh = p$; which follows since we have $gh_0 = gb \cdot Tf = a \cdot Tg \cdot Tf = a =
  p\iota_0$, and for $n>0$ that
$gh_n = gb \cdot T(\epsilon h_{n-1}) = a \cdot T(g\epsilon h_{n-1})
= 0 = p\iota_n$.
\item $h$ is a chain map; which is to say that $\partial(h_n) =
  h\partial(\iota_n)$ for each $n$. But $
  \partial(h_0) = \partial(b \cdot Tf) = 0 = h\partial(\iota_0)$; and
  \begin{align*}
\partial(h_1) & = \partial(b \cdot T(\epsilon h_0)) = b \cdot T((1-fg)h_{0} - \epsilon \partial
(h_{0}))= b \cdot T(h_0 - fa)\\
&= b \cdot Th_{0} - b \cdot Tf \cdot Ta= h_{0}d_0 - h_{0}d_1 = h\partial(\iota_1)\rlap{ ;}
  \end{align*}
and for
  $n\geqslant 1$ we show that $\partial(h_n) = h\partial(\iota_n)$
  by induction and the calculation
\begin{align*}
  \partial(h_n) & = \partial(b \cdot T(\epsilon h_{n-1})) =
  b \cdot T((1-fg)h_{n-1} - \epsilon \partial (h_{n-1}))\\ &= b \cdot T(h_{n-1} -
  \epsilon h\partial(\iota_{n-1}))
  = b \cdot Th_{n-1} - b \cdot T(\epsilon h_{n-2} \textstyle\sum_{j=0}^{n-1} (-1)^j d_{j})\\
  &=h_{n-1}d_0 - h_{n-1}\textstyle\sum_{j=0}^{n-1} (-1)^j d_{j+1} =
  h \partial(\iota_{n})
  \rlap{ .} \qedhere
\end{align*}
\end{itemize}
\end{proof}
Given this result, we are now in a position to complete the proof of
Theorem~\ref{thm:dg} by showing that the functor~\eqref{eq:7} is an
isomorphism. Of course, it is bijective on objects, and on morphisms
is defined as follows. For each $A \in \TAlgs$, form the free $U$-lali
$(p, q, \xi) \colon QA \to A$ as in the preceding result; now apply
Lemma~\ref{lem:12} to obtain a splitting
$\bar q \colon A \rightsquigarrow QA$ for $p$ in $\TAlgw$, which by direct
calculation has components
  \[
  \bar q_n = \iota_n s_{-1} = \iota_n \eta_{T^n A}\colon T^n A \to
  T^{n+1} \to_n QA\rlap{ .}
  \]
  Now the action of~\eqref{eq:7} on morphisms sends $f \colon QA \to
  B$ to $f \cdot \bar q \colon A \rightsquigarrow B$. But it is easy to see
  that this assignation is invertible; given a weak map $g \colon A
  \rightsquigarrow B$, the unique strict $T$-algebra map $\bar f
  \colon QA \to B$ inducing it is determined by the conditions
\[
\bar f \iota_n = b \cdot Tf_n \colon T^{n+1} A \to B\rlap{ .}
\]
This completes the proof of Theorem~\ref{thm:dg}.

\begin{Ex}
  \label{ex:8}
  Let $A$ be a unital dg-algebra---a monoid in $\cat{DG}$, and let
  $\mathsf T$ be the monad $A \otimes (\thg)$ on $\cat{DG}$. In this
  case, $QA \to A$ is the classical bar resolution of $A$~\cite[X,~\S
  6]{Cartan1956Homological}. More generally, if $\A$ is a small
  dg-category, and $\mathsf T$ is the dg-monad on $[\ob \A, \cat{DG}]$
  whose algebras are dg-modules $\A \to \cat{DG}$, then $QA \to A$ is
  the bar resolution $Y \circ A$ described
  in~\cite[Example~6.6]{Keller1994Deriving}.
\end{Ex}
It is worth also pointing out some of the examples which
Theorem~\ref{thm:dg} does not encompass. The category of dg-algebras
is itself monadic over $\cat{DG}$, and as well as the usual strict
morphisms of dg-algebras there are also the well-known weak
(=$A_\infty$) morphisms. It is natural to attempt to re-find these by
lifting the \awfs for homological lalis from $\cat{DG}$ to the
category of dg-algebras. While this is certainly possible, it is does
not fall under the scope of Theorem~\ref{thm:dg}, since the monad for
dg-algebras on $\cat{DG}$ is \emph{not} a dg-monad. It is, however, a
monad induced by a dg-operad, and in a sequel to this paper we will
examine homotopy-coherent maps of algebras over dg-operads from the
perspective of \awfs; we will see that they arise from a
\emph{dendroidal}~\cite{Moerdijk2007Dendroidal} bar resolution which
can also be understood in terms of the bar--cobar construction for
operad algebras~\cite[Chapter~11]{Loday2012Algebraic}.








\end{document}